\setlist[enumerate]{label=(\alph*)}
\tikzset{
    punkt/.style={
           rectangle,
           draw=white, very thick,
           text width=10em,
           minimum height=1.5em,
           text centered}
}
\long\def\@firstoffiveparen#1#2#3#4#5{\textup{\tagform@{#1}}}
\def\eqref@nolink#1{\textup{\tagform@{\ref*{#1}}}}
\def\eqref@link#1{%
\Hy@safe@activestrue
\expandafter\@setref\csname r@#1\endcsname\@firstoffiveparen{#1}%
\Hy@safe@activesfalse
}
\protected\def\eqref{\@ifstar\eqref@nolink\eqref@link}
\crefname{figure}{Figure}{Figures}
\crefname{table}{Table}{Tables}
\numberwithin{table}{section}    
\numberwithin{figure}{section}  
\numberwithin{algorithm}{section}  
\numberwithin{equation}{section}
\newcommand\norm[1]{\Vert#1\Vert}
\newcommand\abs[1]{\left\vert#1\right\vert}
\newcommand\N{\mathbb{N}}
\newcommand\R{\mathbb{R}}
\newcommand{\intr}{\operatorname{int}}
\newcommand{\dist}{\operatorname{dist}}
\newcommand{\conv}{\operatorname{conv}}
\newcommand{\dom}{\operatorname{dom}}
\newcommand{\epi}{\operatorname{epi}}
\newcommand{\card}{\operatorname{card}}
\DeclareMathOperator*{\argmin}{\operatorname{arg\,min}}
\DeclareMathOperator*{\argmax}{\operatorname{arg\,max}}
\newcommand{\cdiff}{\partial^{\textup{c}}}
\newcommand{\gdiff}{\partial^{\square}}
\newcommand{\tto}{\rightrightarrows}
\newcommand{\MM}{\overline{\mathcal M}}
\DeclareMathAlphabet{\mathpzc}{OT1}{pzc}{m}{it}
\newcommand\oo{\mathpzc{o}}
\newtheorem{theorem}{Theorem}[section]
\newtheorem{lemma}[theorem]{Lemma}
\newtheorem{proposition}[theorem]{Proposition}
\newtheorem{remark}[theorem]{Remark}
\newtheorem{definition}[theorem]{Definition}
\newtheorem{example}[theorem]{Example}
\begin{document}

\title{
	Asymptotic regularity for Lipschitzian
	nonlinear optimization problems with applications to
	complementarity-constrained and bilevel programming
}

\author{%
	Patrick Mehlitz%
	\footnote{%
		Brandenburgische Technische Universit\"at Cottbus--Senftenberg,
		Institute of Mathematics,
		03046 Cottbus,
		Germany,
		\email{mehlitz@b-tu.de},
		\url{https://www.b-tu.de/fg-optimale-steuerung/team/dr-patrick-mehlitz},
		ORCID: 0000-0002-9355-850X%
		}
	}


\dedication{
	Dedicated to Stephan Dempe on the occasion of his 65th birthday.
}

\maketitle

\begin{abstract}
	 Asymptotic stationarity and regularity conditions turned out to be quite useful
	 to study the qualitative properties of numerical solution methods for
	 standard nonlinear and complementarity-constrained programs. 
	 In this paper, we first extend these notions to nonlinear optimization problems 
	 with nonsmooth but Lipschitzian data functions in order to find reasonable notions
	 of asymptotic stationarity and regularity in terms of Clarke's and Mordukhovich's
	 subdifferential construction. Particularly, we compare the associated novel asymptotic
	 constraint qualifications with already existing ones.
	 The second part of the paper presents two applications of the obtained theory.
	 On the one hand, we specify our findings for complementarity-constrained optimization 
	 problems and recover recent results from the literature which demonstrates
	 the power of the approach. Furthermore, we hint
	 at potential extensions to or- and vanishing-constrained optimization.
	 On the other hand, we demonstrate the usefulness of asymptotic regularity in the context of
	 bilevel optimization. More precisely, we justify a well-known stationarity system for
	 affinely constrained bilevel optimization problems in a novel way. Afterwards,
	 we suggest a solution algorithm for this class of bilevel optimization problems which
	 combines a penalty method with ideas from DC-programming. After a brief convergence
	 analysis, we present results of some numerical experiments.
\end{abstract}

\begin{keywords}	
	Asymptotic regularity, Bilevel optimization,
	Complementarity-constrained optimization, 
	DC-optimization, Nonsmooth optimization
\end{keywords}

\begin{msc}	
	49J52, 65K10, 90C30, 90C33
\end{msc}

\section{Introduction}\label{sec:introduction}

During the last decade, \emph{asymptotic} (sometimes referred to as sequential) notions of stationarity
and regularity have been developed for standard nonlinear optimization problems, 
see e.g.\ \cite{AndreaniFazzioSchuverdtSecchin2019,AndreaniHaeserMartinez2011,
AndreaniMartinezRamosSilva2016,AndreaniMartinezRamosSilva2018,AndreaniMartinezSvaiter2010},
complementarity-constrained programs, see \cite{AndreaniHaeserSecchinSilva2019,Ramos2019},
cardinality-constrained programs, see \cite{KanzowRaharjaSchwartz2021,KrulikovskiRibeiroSachine2020},
and programs in abstract spaces, see 
\cite{AndreaniHaeserViana2020,BoergensKanzowMehlitzWachsmuth2019}.
Extensions to nonsmooth optimization problems have been discussed recently in
\cite{HelouSantosSimoes2020}, based on Goldstein's $\varepsilon$-subdifferential, and in
\cite{Mehlitz2020b}, where the tools of limiting variational analysis have been exploited.
The huge interest in these concepts is based on their significant relevance for the investigation
of convergence properties associated with solution algorithms tailored for the aforementioned 
problem classes. More precisely, some numerical methods naturally produce \emph{asymptotically stationary}
points so the question arises which type of condition is necessary to hold at the limit in order
to guarantee its stationarity in the classical sense. 
The resulting \emph{asymptotic regularity} conditions
have been shown to serve as comparatively weak constraint qualifications for a bunch of problem
classes in mathematical programming.

In this paper, we apply the concepts of asymptotic stationarity 
and regularity to nonlinear optimization problems of the form
\begin{equation}\label{eq:Lipschitzian_program}\tag{P}
	\min\{\varphi_0(z)\,|\,\varphi_i(z)\leq 0\,(i\in I),\,\varphi_i(z)=0\,(i\in J)\},
\end{equation}
where the data functions $\varphi_0,\ldots,\varphi_{p+q}\colon\R^n\to\overline{\R}$ 
are assumed to be locally Lipschitz continuous but not necessarily smooth in 
a neighborhood of a given reference point.
Here, we use $I:=\{1,\ldots,p\}$ and $J:=\{p+1,\ldots,p+q\}$. 
For that purpose, we will exploit the subdifferential concepts of Clarke and Mordukhovich,
see e.g.\ \cite{Clarke1983,Mordukhovich2006}, respectively, since these generalized derivatives
are outer semicontinuous (in the sense of set-valued mappings) 
by construction which will be beneficial for our theoretical
investigations. In \cite[Section~1]{HelouSantosSimoes2020}, the authors point out that this
approach may have the disadvantage that numerical methods associated with
\eqref{eq:Lipschitzian_program} may not compute stationary points in this new sense.
However, as we will see in \cref{sec:MPCCs}, our results recover recently introduced notions of
asymptotic stationarity and regularity for so-called \emph{mathematical programs with complementarity
constraints} (MPCCs for short),
see \cite{LuoPangRalph1996,OutrataKocvaraZowe1998},
which have been shown to be useful in numerical practice,
see \cite{AndreaniHaeserSecchinSilva2019,Ramos2019}. 
We note that our model program
\eqref{eq:Lipschitzian_program} covers other prominent classes from disjunctive programming like
so-called or- and vanishing-constrained programs, see 
\cite{AchtzigerKanzow2008,HoheiselKanzow2007,HoheiselPablosPooladianSchwartzSteverango2020,Mehlitz2019,Mehlitz2020}
and \cref{rem:or_constrained_programming}, so that our findings are likely to possess reasonable
extensions to these models as well.
Based on our new notions of asymptotic stationarity, we introduce three novel sequential constraint
qualifications which guarantee that asymptotically stationary points of
\eqref{eq:Lipschitzian_program} are stationary in Clarke's or Mordukhovich's sense, i.e., that
these points satisfy Karush--Kuhn--Tucker-type stationarity conditions based on Clarke's or
Mordukhovich's subdifferential. Afterwards, we study the relationship between these new
regularity conditions and already available constraint qualifications from nonsmooth programming.
Particularly, we address connections to a nonsmooth version of the so-called
\emph{relaxed constant positive linear dependence constraint qualification} (RCPLD), which has been
introduced for smooth standard nonlinear optimization problems in
\cite{AndreaniHaeserSchuverdtSilva2012} and extended to nonsmooth programs in \cite{XuYe2020} quite
recently. 

As already mentioned, we apply our quite general findings regarding the abstract model
\eqref{eq:Lipschitzian_program} to MPCCs in \cref{sec:MPCCs} in order to underline the particular value
as well as the applicability of these results. 
In \cref{sec:bilevel}, we demonstrate the power of asymptotic stationarity and regularity
in the context of bilevel optimization with affine data in the upper level constraints as
well as in the overall lower level. It is well known that bilevel optimization problems are
notoriously difficult due to their inherent irregularity, nonsmoothness, and nonconvexity, while
being a major topic in the focus of many researchers because of their overwhelming
practical relevance with respect to (w.r.t.) the modeling of real-world applications from finance, economics, or
natural and engineering sciences, 
see \cite{Dempe2002,Dempe2020,DempeKalashnikovPerezValdesKalashnykova2015} 
for an introduction to bilevel optimization and a comprehensive literature review.
Here, we make use of the so-called optimal value transformation of bilevel optimization problems
in order to transfer the program of interest into the form \eqref{eq:Lipschitzian_program}.
Noting that the latter is inherently asymptotically regular in the investigated setting, 
we are in position to state necessary optimality conditions without any further assumptions
or the use of partial penalization arguments. Afterwards, we suggest a solution method for
the considered problem class which penalizes the constraint comprising the optimal value function
and uses methods from DC-programming, where DC abbreviates \emph{difference of convex functions},
see \cite{HorstThoai1999,ThiDinh2018} for an overview,
in order to solve the subproblems. As we will see, this approach is computationally reasonable since
it exploits only pointwise evaluations of function values and subgradients of the 
optimal value function which can be easily computed while the overall optimal value function 
may remain an implicitly given object. Furthermore, we can apply our abstract theory from
\cref{sec:asymptotic_concepts} in order to demonstrate that our method computes stationary
points of the bilevel optimization problem of interest. Some numerical results visualize the
computational performance of the method.

The remaining parts of this paper are organized as follows.
In \cref{sec:notation}, we summarize the notation used in this manuscript, recall some fundamental
notions from nonsmooth differentiation, and present some preliminary results.
\Cref{sec:asymptotic_concepts} is dedicated to the formal introduction of asymptotic stationarity and
regularity notions which address \eqref{eq:Lipschitzian_program}. Noting that we proceed in a 
fairly standard way here, many nearby proofs are left out for the purpose of brevity. 
Instead, we focus on the relationship between the new notions of asymptotic regularity and already
available constraint qualifications from nonsmooth optimization.
These results are applied to MPCCs in \cref{sec:MPCCs}. As we will see, we precisely recover already
available theory from the literature which has been obtained using a standard local decomposition
approach. In \cref{sec:bilevel}, bilevel optimization problems of special structure are studied in the
light of asymptotic stationarity and regularity. Particularly, we state a stationarity condition
which holds at all local minimizers, formulate a numerical method which is capable of finding stationary
points in this sense, and present some associated numerical results. The paper closes with some
concluding remarks in \cref{sec:conclusions}.

\section{Notation and preliminaries}\label{sec:notation}

The general notation in this paper is standard.
We use $\overline{\R}:=\R\cup\{-\infty,\infty\}$ in order to denote the extended real line.
The space $\R^n$ is equipped with the Euclidean norm $\norm{\cdot}$. For $z\in\R^n$ and $\varepsilon>0$,
$\mathbb B_\varepsilon(z):=\{y\in\R^n\,|\,\norm{y-z}\leq\varepsilon\}$ represents the closed
$\varepsilon$-ball around $z$. 
The distance function $\dist_K\colon\R^n\to\R$ of a closed, convex set $K\subset\R^n$ is given by
$\dist_K(z):=\inf\{\norm{y-z}\,|\,y\in K\}$ for each $z\in\R^n$. Moreover, 
$\Pi_K\colon\R^n\to\R^n$ denotes the projection map associated with $K$.
Whenever $\phi\colon\R^n\to\R$ is smooth at some point $\bar z\in\R^n$, $\nabla\phi(\bar z)\in\R^n$
is used to denote the gradient of $\phi$ at $\bar z$. For a function $\Phi\colon\R^n\to\R^m$ and
a vector $y\in\R^m$, the mapping $\langle y,\Phi\rangle\colon\R^n\to\R$ is given by
$\langle y,\Phi\rangle(z):=y^\top\Phi(z)$ for each $z\in\R^n$.
For brevity of notation, a tuple $(z_1,\ldots,z_n)$ of real numbers $z_1,\ldots,z_n\in\R$ will
be identified with a vector from $\R^n$ which possesses the components $z_1,\ldots,z_n$.
For finite index sets $I_1$ and $I_2$ as well as families $(a_i)_{i\in I_1},(b_i)_{i\in I_2}\subset\R^n$,
we call the pair of families $\bigl((a_i)_{i\in I_1},(b_i)_{i\in I_2}\bigr)$ positive linearly
dependent whenever there exist scalars $\alpha_i\geq 0$ ($i\in I_1$) and $\beta_i$ ($i\in I_2$), not
all vanishing simultaneously, such that $\sum_{i\in I_1}\alpha_i\,a_i+\sum_{i\in I_2}\beta_i\,b_i=0$.
For a set $A\subset\R^n$ and some point $z\in\R^n$, we use $A+z:=\{a+z\,|\,a\in A\}=:z+A$ for
brevity.
Finally, for a set-valued mapping $\Gamma\colon\R^n\tto\R^m$ and some point $\bar z\in\R^n$, we use
\[
	\limsup\limits_{z\to\bar z}\Gamma(z)
	:=
	\left\{	
		\xi\in\R^m\,\middle|\,
			\begin{aligned}
				&\exists\{z^k\}_{k\in\N}\subset\R^n,\,\exists\{\xi^k\}_{k\in\N}\subset\R^m\colon\\
				&\qquad z^k\to\bar z,\,\xi^k\to\xi,\,\xi^k\in\Gamma(z^k)\,\forall k\in\N
			\end{aligned}
	\right\}
\]
in order to denote the outer (or Painlev\'{e}--Kuratowski) limit of $\Gamma$ at $\bar z$.
Note that $\Gamma(\bar z)\subset\limsup_{z\to\bar z}\Gamma(z)$ holds always true, and that the
outer limit is always closed. In case where $\limsup_{z\to\bar z}\Gamma(z)\subset\Gamma(\bar z)$
is valid, $\Gamma$ is said to be outer semicontinuous at $\bar z$.

\subsection{Variational analysis}\label{sec:variational_analysis}

Subsequently, we recall some notions from nonsmooth analysis and generalized differentiation
which can be found, e.g., in \cite{Clarke1983,Mordukhovich2006,RockafellarWets1998}.

For a closed set $A\subset\R^n$ and some point $\bar z\in A$, the regular and limiting normal
cone to $A$ at $\bar z$ are given, respectively, by means of
\begin{align*}
	\widehat{\mathcal N}_A(\bar z)
	&:=
	\{\xi\in\R^n\,|\,\forall z\in A\colon\,\xi^\top(z-\bar z)\leq\oo(\norm{z-\bar z})\},\\
	\mathcal N_A(\bar z)
	&:=
	\limsup\limits_{z\to\bar z,\,z\in A}\widehat{\mathcal N}_A(z).
\end{align*}
We note that in situations where $A$ is convex, these cones coincide with 
the standard normal cone from convex analysis, i.e., we find
\[
	\widehat{\mathcal N}_A(\bar z)
	=
	\mathcal N_A(\bar z)
	=
	\{\xi\in\R^n\,|\,\forall z\in A\colon\,\xi^\top(z-\bar z)\leq 0\}
\]
in this situation.

For a lower semicontinuous function $\psi\colon\R^n\to\overline{\R}$, 
$\dom\psi:=\{z\in\R^n\,|\,|\psi(z)|<\infty\}$ and
$\epi\psi:=\{(z,\alpha)\in\R^n\times\R\,|\,\psi(z)\leq\alpha\}$ denote its domain
and epigraph, respectively. 
Note that these sets are closed.
Let us fix a point $\bar z\in\dom\psi$ where $\psi$ is locally Lipschitz continuous. Then the set
\[
	\widehat{\partial}\psi(\bar z)
	:=
	\left\{
		\xi\in\R^n\,\middle|\,(\xi,-1)\in\widehat{\mathcal N}_{\epi\psi}(\bar z,\psi(\bar z))
	\right\}
\]
is called the regular (or Fr\'{e}chet) subdifferential of $\psi$ at $\bar z$. 
Furthermore, we refer to
\[
	\partial\psi(\bar z)
	:=
	\left\{
		\xi\in\R^n\,\middle|\,(\xi,-1)\in \mathcal N _{\epi\psi}(\bar z,\psi(\bar z))
	\right\}
\]
as the limiting (or Mordukhovich) subdifferential of $\psi$ at $\bar z$. Finally, 
\[
	\cdiff\psi(\bar z)
	:=\conv\partial\psi(\bar z),
\]
i.e., the convex hull of $\partial\psi(\bar z)$, is referred to as the Clarke 
(or convexified) subdifferential of $\psi$ at $\bar z$. By definition, we have 
$\widehat{\partial}\psi(\bar z)\subset\partial\psi(\bar z)\subset\cdiff\psi(\bar z)$,
and whenever $\psi$ is convex, all these sets coincide with the subdifferential of $\psi$ in the
sense of convex analysis. We note that the regular and limiting subdifferential are positive
homogeneous while Clarke's subdifferential is homogeneous.

Some properties of Mordukhovich's and Clarke's subdifferential, which we will point out below, 
are of essential importance in this paper.
Therefore, let us assume again that $\psi$ is locally Lipschitz continuous at $\bar z$. Then
$\gdiff\psi(\bar z)$ is nonempty where $\gdiff$ is a representative of the operators $\partial$ and
$\cdiff$ (here and in the course of this paper). Furthermore, the set-valued
mapping $z\mapsto\gdiff\psi(z)$
is locally bounded at $\bar z$, i.e., there are a neighborhood $U$ of $\bar z$ and a bounded set
$B\subset\R^n$ such that $\gdiff\psi(z)\subset B$ is valid for all $z\in U$. 
Additionally, the set-valued mapping $z\mapsto\gdiff\psi(z)$ is outer semicontinuous 
at $\bar z$.
This property may be also referred to as \emph{robustness} of the subdifferential $\gdiff$.

Below, we present a calculus rule for the subdifferential of minimum functions.
Note that we do not only provide upper estimates but precise formulas here.
\begin{lemma}\label{lem:composed_subdifferential_of_minimum_function}
	Let $f_1,f_2\colon\R^n\to\R$ be continuously differentiable functions and consider
	the function $\varphi\colon\R^n\to\R$ given by
	$\varphi(z):=\min(f_1(z),f_2(z))$ for all $z\in\R^n$.
	For each point $\bar z\in\R^n$, the following formulas hold:
	\begin{align*}
		\partial\varphi(\bar z)
		&=
		\begin{cases}
			\{\nabla f_1(\bar z)\}	& f_1(\bar z)<f_2(\bar z),\\
			\{\nabla f_1(\bar z),\nabla f_2(\bar z)\}	& f_1(\bar z)=f_2(\bar z),\\
			\{\nabla f_2(\bar z)\}	& f_1(\bar z)>f_2(\bar z),
		\end{cases}
		\\
		\cdiff\varphi(\bar z)
		&=
		\begin{cases}
			\{\nabla f_1(\bar z)\}	& f_1(\bar z)<f_2(\bar z),\\
			\conv\{\nabla f_1(\bar z),\nabla f_2(\bar z)\}	& f_1(\bar z)=f_2(\bar z),\\
			\{\nabla f_2(\bar z)\}	& f_1(\bar z)>f_2(\bar z),
		\end{cases}
		\\
		\partial(-\varphi)(\bar z)
		&=\cdiff(-\varphi)(\bar z)
		=
		\begin{cases}
			\{-\nabla f_1(\bar z)\}	& f_1(\bar z)<f_2(\bar z),\\
			\conv\{-\nabla f_1(\bar z),-\nabla f_2(\bar z)\}	& f_1(\bar z)=f_2(\bar z),\\
			\{-\nabla f_2(\bar z)\}	& f_1(\bar z)>f_2(\bar z).
		\end{cases}
	\end{align*}
\end{lemma}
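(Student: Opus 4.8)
The plan is to treat the three cases ($f_1(\bar z)<f_2(\bar z)$, $f_1(\bar z)=f_2(\bar z)$, $f_1(\bar z)>f_2(\bar z)$) separately, and within each case to compute $\partial\varphi(\bar z)$ first; the Clarke subdifferential then follows immediately by taking the convex hull, and the formula for $\partial(-\varphi)$ follows from the observation that $-\varphi=\max(-f_1,-f_2)$ together with the same case analysis. The two unequal cases are essentially trivial: if $f_1(\bar z)<f_2(\bar z)$, then by continuity $\varphi=f_1$ on a neighborhood of $\bar z$, so $\varphi$ is continuously differentiable there and $\partial\varphi(\bar z)=\{\nabla f_1(\bar z)\}$; symmetrically for the case $f_1(\bar z)>f_2(\bar z)$. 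Since a singleton is its own convex hull, the Clarke formulas in these cases are immediate, and the same reasoning applied to $-\varphi$ (which locally equals $-f_1$ or $-f_2$) gives the last line.

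The substance is the equality case $f_1(\bar z)=f_2(\bar z)=:\alpha$. The inclusion $\partial\varphi(\bar z)\subset\{\nabla f_1(\bar z),\nabla f_2(\bar z)\}$ is the harder direction and is what I expect to be the main obstacle. The approach would be to use the definition via the limiting normal cone to $\epi\varphi$ at $(\bar z,\alpha)$: note $\epi\varphi=\epi f_1\cup\epi f_2$, and near $(\bar z,\alpha)$ this is a union of two smooth epigraphical manifolds-with-boundary. A point $\xi\in\partial\varphi(\bar z)$ arises as a limit $\xi^k\to\xi$ with $(\xi^k,-1)\in\widehat{\mathcal N}_{\epi\varphi}(z^k,\varphi(z^k))$ and $(z^k,\varphi(z^k))\to(\bar z,\alpha)$. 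For each $k$, the base point lies in $\epi f_1$ or $\epi f_2$ (or both); passing to a subsequence, say it lies in $\epi f_1$, and then $\widehat{\mathcal N}_{\epi\varphi}(z^k,\varphi(z^k))\subset\widehat{\mathcal N}_{\epi f_1}(z^k,\varphi(z^k))$ because $\epi f_1\subset\epi\varphi$. If moreover $\varphi(z^k)=f_1(z^k)$ (which holds precisely when $f_1(z^k)\le f_2(z^k)$), the base point is on the boundary of $\epi f_1$, where the regular normal cone to a smooth epigraph is the ray through $(\nabla f_1(z^k),-1)$; matching second coordinates forces $\xi^k=\nabla f_1(z^k)\to\nabla f_1(\bar z)$. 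The only delicate sub-case is when $f_1(z^k)>f_2(z^k)$ so that the base point sits in the \emph{interior} of $\epi f_1$ — but then $\widehat{\mathcal N}_{\epi f_1}$ at that point is trivial, contradicting the presence of $(\xi^k,-1)\ne 0$; hence this sub-case cannot occur, and a symmetric argument covers base points in $\epi f_2$. This yields $\partial\varphi(\bar z)\subset\{\nabla f_1(\bar z),\nabla f_2(\bar z)\}$.

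For the reverse inclusion $\{\nabla f_1(\bar z),\nabla f_2(\bar z)\}\subset\partial\varphi(\bar z)$ in the equality case, I would exhibit each gradient as a limit of regular subgradients at nearby points: pick $z^k\to\bar z$ with $f_1(z^k)<f_2(z^k)$ (possible unless $f_1\equiv f_2$ near $\bar z$, in which case the statement is trivial since then $\nabla f_1(\bar z)=\nabla f_2(\bar z)$), so $\varphi=f_1$ near $z^k$ and $\nabla f_1(z^k)\in\widehat\partial\varphi(z^k)\subset\partial\varphi(z^k)$; robustness (outer semicontinuity) of $\partial\varphi$ then gives $\nabla f_1(\bar z)\in\partial\varphi(\bar z)$, and symmetrically for $\nabla f_2(\bar z)$. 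Taking convex hulls gives the Clarke formula. Finally, for $\partial(-\varphi)(\bar z)=\cdiff(-\varphi)(\bar z)$: in the unequal cases this is again a singleton, handled as above; in the equality case, apply the result just proved for minima to $-f_1,-f_2$ to get $\partial(\min(-f_1,-f_2))(\bar z)=\{-\nabla f_1(\bar z),-\nabla f_2(\bar z)\}$, but $-\varphi=\max(-f_1,-f_2)$ rather than their minimum, so here I would instead argue directly that near $z^k$ with $f_1(z^k)<f_2(z^k)$ we have $-\varphi=-f_1$, hence $\pm$-one-sided subgradients of $-\varphi$ at such points are $\{-\nabla f_1(z^k)\}$ and $\{-\nabla f_2(z^k)\}$ accordingly; combining via robustness shows both $-\nabla f_1(\bar z)$ and $-\nabla f_2(\bar z)$ lie in $\partial(-\varphi)(\bar z)$, while the upper estimate $\partial(-\varphi)(\bar z)\subset\cdiff(-\varphi)(\bar z)\subset\conv\{-\nabla f_1(\bar z),-\nabla f_2(\bar z)\}$ follows from the standard Clarke calculus rule for pointwise maxima of smooth functions. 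Since both the inner and outer bounds coincide with $\conv\{-\nabla f_1(\bar z),-\nabla f_2(\bar z)\}$, all four sets in the last line agree.
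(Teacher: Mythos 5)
Your overall architecture is workable, and the epigraph-union argument for the inclusion $\partial\varphi(\bar z)\subset\{\nabla f_1(\bar z),\nabla f_2(\bar z)\}$ is a legitimate, self-contained alternative to the paper's citation of the minimum rule from \cite[Proposition~1.113]{Mordukhovich2006}. But two steps fail as written. First, in the lower inclusion for the equality case you claim that a sequence $z^k\to\bar z$ with $f_1(z^k)<f_2(z^k)$ exists ``unless $f_1\equiv f_2$ near $\bar z$''. That dichotomy is false: the negation is only that $f_1\geq f_2$ holds on a neighborhood of $\bar z$ (take $f_1(z)=\norm{z}^2$ and $f_2\equiv 0$ at $\bar z=0$), and your argument leaves this case uncovered. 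The conclusion $\nabla f_1(\bar z)\in\partial\varphi(\bar z)$ is still true there, but for a different reason, which is exactly the step the paper supplies: $f_1-f_2\geq 0$ near $\bar z$ with equality at $\bar z$ makes $\bar z$ a local minimizer of $f_1-f_2$, so $\nabla f_1(\bar z)=\nabla f_2(\bar z)$, while $\varphi=f_2$ locally gives $\partial\varphi(\bar z)=\{\nabla f_2(\bar z)\}$. The same repair is needed everywhere you rely on nearby points with $f_1<f_2$ or $f_2<f_1$, including in your treatment of $\partial(-\varphi)$.

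Second, and more seriously, your sandwich for $\partial(-\varphi)(\bar z)$ in the equality case does not close. What you actually establish is
\[
	\{-\nabla f_1(\bar z),-\nabla f_2(\bar z)\}
	\subset
	\partial(-\varphi)(\bar z)
	\subset
	\conv\{-\nabla f_1(\bar z),-\nabla f_2(\bar z)\},
\]
and your final sentence asserting that the inner and outer bounds coincide is wrong: the inner bound is a two-point set, not the segment. Limits of gradients from nearby points where $-\varphi$ is smooth can only ever produce the two endpoints, so robustness alone cannot show that the interior of the segment belongs to $\partial(-\varphi)(\bar z)$ --- which it does, in contrast to the minimum case. The missing ingredient is the exact maximum rule for the limiting subdifferential of finitely many $C^1$ functions, cited in the paper as \cite[Theorem~3.46(ii)]{Mordukhovich2006}, or equivalently the elementary observation that already the regular subdifferential satisfies $\conv\{-\nabla f_1(\bar z),-\nabla f_2(\bar z)\}\subset\widehat{\partial}(-\varphi)(\bar z)$, because $\max(-f_1,-f_2)$ dominates every convex combination $-tf_1-(1-t)f_2$ while agreeing with it at $\bar z$ up to $\oo(\norm{z-\bar z})$. (A small additional slip: when $f_1(z^k)>f_2(z^k)$ the base point $(z^k,\varphi(z^k))$ is not in the interior of $\epi f_1$ but lies outside $\epi f_1$ altogether; the sub-case is excluded for that reason, and that part of your argument survives.)
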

\begin{proof}
	Exploiting $-\varphi(z)=\max(-f_1(z),-f_2(z))$ which holds for each $z\in\R^n$, 
	the formulas for $\cdiff(-\varphi)(\bar z)$  and $\partial(-\varphi)(\bar z)$
	follow from the maximum rules
	\cite[Proposition~2.3.12]{Clarke1983} and
	\cite[Theorem~3.46(ii)]{Mordukhovich2006}, respectively, 
	while observing that $f_1$ and $f_2$ are
	continuously differentiable. 
	Thus, the formula for $\cdiff\varphi(\bar z)$ is obtained 
	from homogeneity of Clarke's subdifferential. 
	
	It remains to prove the formula for $\partial\varphi(\bar z)$.
	The inclusion $\subset$ is shown in \cite[Proposition~1.113]{Mordukhovich2006}.
	In order to validate the converse inclusion, we employ a distinction of cases.
	If $f_1(\bar z)<f_2(\bar z)$ holds, then we find $\varphi(z)=f_1(z)$ locally
	around $\bar z$, and by continuous differentiability of $f_1$, 
	$\partial\varphi(\bar z)=\{\nabla f_1(\bar z)\}$ follows. Similarly, we can address
	the situation $f_1(\bar z)>f_2(\bar z)$. Finally, let us investigate the case
	$f_1(\bar z)=f_2(\bar z)$. We will show $\nabla f_1(\bar z)\in\partial\varphi(\bar z)$.
	Similarly, one obtains $\nabla f_2(\bar z)\in\partial\varphi(\bar z)$.
	Suppose that there is a sequence $\{z^k\}_{k\in\N}\subset\R^n$
	such that $z^k\to\bar z$ and $f_1(z^k)<f_2(z^k)$ for all $k\in\N$. 
	By continuity of $f_1$ and $f_2$ as well as continuous differentiability of $f_1$,
	we find $\widehat{\partial}\varphi(z^k)=\{\nabla f_1(z^k)\}$, and taking the 
	limit $k\to\infty$ yields $\nabla f_1(\bar z)\in\partial\varphi(\bar z)$.
	If such a sequence $\{z^k\}_{k\in\N}$ does not exist, 
	we find a neighborhood $U$ of $\bar z$ such that
	$f_1(z)\geq f_2(z)$ holds for all $z\in U$. On the one hand, this shows
	$\varphi(z)=f_2(z)$ for all $z\in U$ and, thus, $\partial\varphi(\bar z)=\{\nabla f_2(\bar z)\}$.
	On the other hand, due to $f_1(\bar z)=f_2(\bar z)$, $\bar z$ is a local minimizer
	of $f_1-f_2$ which yields $\nabla f_1(\bar z)-\nabla f_2(\bar z)=0$. Summing this
	up, we have shown $\nabla f_1(\bar z)\in\partial \varphi(\bar z)$.	
\end{proof}

\subsection{Sequential stationarity for optimization problems with Lipschitzian geometric constraints}
	\label{sec:geometric_constraints}

In this section, we investigate the optimization problem
\begin{equation}\label{eq:geometric_constraints}\tag{Q}
	\min\{f(z)\,|\,F(z)\in K\}
\end{equation}
where $f\colon\R^n\to\overline{\R}$ and $F\colon\R^n\to\overline{\R}^m$ are given functions
and $K\subset\R^m$ is a convex polyhedral set. 
Let $\mathcal F:=\{z\in\R^n\,|\,F(z)\in K\}$ be the feasible set of \eqref{eq:geometric_constraints}.
For later use, we are going to characterize local minimizers of \eqref{eq:geometric_constraints}
with the aid of sequential stationarity conditions which are based on the limiting
subdifferential. Related results can be found in
\cite{Mehlitz2020b,Ramos2019}.
\begin{proposition}\label{prop:sequential_stationarity_geometric_constraints}
	Let $\bar z\in\mathcal F$ be a local minimizer of \eqref{eq:geometric_constraints}.
	Furthermore, assume that $f$ and $F$ are Lipschitz continuous around $\bar z$.
	Then there exist sequences 
		$\{z^k\}_{k\in\N},\{\varepsilon^k\}_{k\in\N}\subset\R^n$ and 
		$\{\lambda^k\}_{k\in\N}\subset\R^m$
		such that $z^k\to\bar z$, $\varepsilon^k\to 0$, and
			\begin{subequations}\label{eq:seq_stat_geo}
				\begin{align}
					\label{eq:seq_stat_geo_der}
						&\forall k\in\N\colon\quad
						\varepsilon^k\in\partial f(z^k)+\partial\langle\lambda^k,F\rangle(z^k),\\
					\label{eq:seq_stat_geo_mult}
						&\forall k\in\N\colon\quad
							\lambda^k\in\widehat{\mathcal N}_K(F(\bar z)).
				\end{align}
			\end{subequations}
\end{proposition}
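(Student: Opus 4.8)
The plan is to run a quadratic-penalty argument reinforced by a Tikhonov term, to extract optimality information via Fermat's rule together with the sum and chain rules of limiting calculus, and finally to force the resulting multiplier sequence into $\widehat{\mathcal N}_K(F(\bar z))$ by means of a reduction property of convex polyhedra. First I would fix $\delta>0$ such that $f$ and $F$ are Lipschitz continuous on $\mathbb B_\delta(\bar z)$ and $f(\bar z)\leq f(z)$ holds for all $z\in\mathbb B_\delta(\bar z)\cap\mathcal F$, and for each $k\in\N$ introduce the penalized objective
\[
	\theta_k(z):=f(z)+\frac k2\,\dist_K(F(z))^2+\frac12\norm{z-\bar z}^2,
\]
minimized over the compact ball $\mathbb B_\delta(\bar z)$; a global minimizer $z^k$ exists by continuity. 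The standard estimate $\theta_k(z^k)\leq\theta_k(\bar z)=f(\bar z)$, combined with boundedness of $f$ on the ball, yields $\dist_K(F(z^k))\to 0$, and an extraction argument invoking local optimality of $\bar z$ then gives $z^k\to\bar z$; in particular $z^k\in\intr\mathbb B_\delta(\bar z)$ for all large $k$, so $z^k$ is an unconstrained local minimizer of $\theta_k$. I expect this part to be routine and would only sketch it.

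Next, from $0\in\partial\theta_k(z^k)$ I would use the sum rule for the limiting subdifferential, exploiting that $z\mapsto\tfrac12\norm{z-\bar z}^2$ is smooth and that $y\mapsto\tfrac12\dist_K(y)^2$ is continuously differentiable with gradient $y-\Pi_K(y)$ (which holds because $K$ is closed and convex), together with the chain rule for a smooth outer function. This produces
\[
	\bar z-z^k\in\partial f(z^k)+\partial\langle\lambda^k,F\rangle(z^k),\qquad
	\lambda^k:=k\bigl(F(z^k)-\Pi_K(F(z^k))\bigr),
\]
so that \eqref{eq:seq_stat_geo_der} holds with $\varepsilon^k:=\bar z-z^k\to 0$.

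It remains to establish \eqref{eq:seq_stat_geo_mult}. The characterization of the metric projection onto a convex set gives $F(z^k)-\Pi_K(F(z^k))\in\mathcal N_K(\Pi_K(F(z^k)))$, hence $\lambda^k\in\mathcal N_K(\Pi_K(F(z^k)))$ since the normal cone is a cone, while $\Pi_K(F(z^k))\to\Pi_K(F(\bar z))=F(\bar z)$ by continuity of $\Pi_K$ and $F(\bar z)\in K$. The decisive step — which I expect to be the main obstacle — is to turn this into membership in $\widehat{\mathcal N}_K(F(\bar z))=\mathcal N_K(F(\bar z))$; here I would invoke the local reduction property of convex polyhedra, namely that there is some $\varepsilon>0$ with $\mathcal N_K(y)\subseteq\mathcal N_K(F(\bar z))$ for all $y\in K\cap\mathbb B_\varepsilon(F(\bar z))$ (see, e.g., \cite{RockafellarWets1998}), and apply it to $y=\Pi_K(F(z^k))$ for all large $k$. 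Discarding finitely many indices then yields the claim. Note that polyhedrality is genuinely needed here: the sequence $\{\lambda^k\}_{k\in\N}$ may be unbounded, so mere outer semicontinuity of the normal-cone mapping $\mathcal N_K$ would not suffice.
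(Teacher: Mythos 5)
Your proposal is correct and follows essentially the same route as the paper's proof: the same quadratic penalty with Tikhonov regularization over $\mathbb B_\delta(\bar z)$, the same extraction of $\lambda^k:=k\bigl(F(z^k)-\Pi_K(F(z^k))\bigr)$ via Fermat's rule and the limiting sum and chain rules, and the same polyhedral reduction step (the paper cites \cite[Lemma~2.1]{Mehlitz2019b} for precisely the local inclusion $\mathcal N_K(y)\subset\mathcal N_K(F(\bar z))$ you invoke). Your closing remark on why polyhedrality, rather than mere outer semicontinuity of $\mathcal N_K$, is genuinely needed matches the paper's own discussion of the possibly unbounded multiplier sequence.
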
 
\begin{proof}
	We choose $\delta>0$ such that $f$ and $F$ are locally Lipschitz
	continuous on $\mathbb B_{2\delta}(\bar z)$ while $f(z)\geq f(\bar z)$ holds
	for all $z\in \mathcal F\cap\mathbb B_\delta(\bar z)$.
	For each $k\in\N$, we consider the penalized problem
	\begin{equation}
		\label{eq:penalized_problem}\tag{Q$(k)$}
			\min\{
				f(z)+\tfrac k2\dist^2_K(F(z))+\tfrac12\norm{z-\bar z}^2
				\,|\,
				z\in\mathbb B_\delta(\bar z)
				\}.
	\end{equation}
	Noting that the objective function of \eqref{eq:penalized_problem} is locally Lipschitz continuous
	on the feasible set, this program possesses a global minimizer $z^k\in\mathbb B_\delta(\bar z)$ for
	each $k\in\N$. 
	Since $\{z^k\}_{k\in\N}$ is bounded, we may assume without loss of generality (w.l.o.g.) 
	that there is some 
	$\tilde z\in\mathbb B_\delta(\bar z)$ such that $z^k\to\tilde z$ holds as $k\to\infty$.
	By feasibility of $\bar z$, we infer
	\[
		\forall k\in\N\colon\quad 
		f(z^k)+\tfrac k2\dist^2_K(F(z^k))+\tfrac12\norm{z^k-\bar z}^2\leq f(\bar z).
	\]
	Since $f$ is continuous on $\mathbb B_\delta(\bar z)$, 
	$\{f(z^k)\}_{k\in\N}$ is a bounded sequence. Thus,
	we find a constant $C>0$ which satisfies $\tfrac k2\dist_K^2(F(z^k))\leq C$.
	Consequently, we have $\dist_K^2(F(z^k))\to 0$ as $k\to\infty$.
	Exploiting the continuity of the distance function as well as $F$, this yields 
	$F(\tilde z)\in K$, i.e., $\tilde z\in \mathcal F\cap \mathbb B_\delta(\bar z)$.
	By choice of $\delta$, this leads to
	\begin{align*}
		f(\tilde z)
		&\leq
		f(\tilde z)+\tfrac12\norm{\tilde z-\bar z}^2\\
		&\leq
		\limsup\limits_{k\to\infty}
			\left(f(z^k)+\tfrac k2\dist_K^2(F(z^k))+\tfrac12\norm{z^k-\bar z}^2\right)\\
		&\leq
		f(\bar z)
		\leq
		f(\tilde z),
	\end{align*}
	and, thus, we have $\tilde z=\bar z$. 
	Additionally, we may assume w.l.o.g.\ that $\{z^k\}_{k\in\N}$ belongs to the interior of 
	$\mathbb B_\delta(\bar z)$.
	
	Next, we define functions
	$f_1,f_2\colon\R^n\to\overline{\R}$ by means of
	\[
		\forall z\in\R^n\colon\quad
		f_1(z):=\tfrac12\dist^2_K(F(z)),\qquad f_2(z):=\tfrac12\norm{z-\bar z}^2.
	\]
	Recalling that $z^k$ is a minimizer of \eqref{eq:penalized_problem}, Fermat's rule,
	see \cite[Proposition~1.114]{Mordukhovich2006}, guarantees
	$0\in\widehat{\partial}(f+k\,f_1+f_2)(z^k)$ for each $k\in\N$. Since $f_2$ is continuously
	differentiable with gradient $\nabla f_2(z)=z-\bar z$ for arbitrary $z\in\R^n$, 
	$\bar z-z^k\in\widehat{\partial}(f+k\,f_1)(z^k)$ follows from
	\cite[Proposition~1.107]{Mordukhovich2006}.
	Next, we apply the sum rule for the limiting subdifferential, 
	see \cite[Theorem~3.36]{Mordukhovich2006}, in order to find that
	$\bar z-z^k\in\partial f(z^k)+k\,\partial f_1(z^k)$
	holds for each $k\in\N$. Due to $f_1=\tfrac12\dist_K^2\circ F$, continuous differentiability
	of the squared distance function to a convex set, and Lipschitzianity of $F$
	at $z^k$, we find
	\[
		\partial f_1(z^k)
		=
		\partial \langle F(z^k)-\Pi_K(F(z^k)),F\rangle(z^k)
	\]
	from the subdifferential chain rule \cite[Corollary~3.43]{Mordukhovich2006}. 
	Setting $\varepsilon^k:=\bar z-z^k$
	and $\lambda^k:=k\bigl(F(z^k)-\Pi_K(F(z^k))\bigr)$ for each $k\in\N$, we find
	$\varepsilon^k\to 0$ and \eqref{eq:seq_stat_geo_der}.
	By construction, we have $z^k\to\bar z$
	and $\lambda^k\in\widehat{\mathcal N}_K(\Pi_K(F(z^k)))$ for
	each $k\in\N$, see \cite[Section~6.E]{RockafellarWets1998}.
	Due to $F(z^k)\to F(\bar z)$ and the polyhedrality of $K$, we can 
	apply \cite[Lemma~2.1]{Mehlitz2019b} in order to find \eqref{eq:seq_stat_geo_mult}
	for large enough $k\in\N$.
	This completes the proof.
\end{proof}

Let us comment on the assertion of \cref{prop:sequential_stationarity_geometric_constraints}.
First, we would like to point the reader's attention to the fact that the appearing multiplier sequence
$\{\lambda^k\}_{k\in\N}$  does not need to be bounded.
If this would be the case, then one could simply take the limit along some convergent subsequence
in \eqref{eq:seq_stat_geo} in order to find some
$\lambda\in\R^m$ which satisfies 
\[
	0\in\partial f(\bar z)+\partial\langle\lambda,F\rangle(\bar z),
	\qquad
	\lambda\in\widehat{\mathcal N}_K(F(\bar z)).
\]
This follows by robustness of the limiting subdifferential, 
see \cite[Lemma~3.4]{Mehlitz2020b} as well, and convexity of $K$.
Note that due to the appearance of the limiting subdifferential, these conditions precisely correspond to the so-called 
Mordukhovich (or simply M-) stationarity conditions of
\eqref{eq:geometric_constraints} at $\bar z$
(recall that since $K$ is convex, the limiting and regular normal cone to this set coincide).
Clearly, there exist optimization problems whose
local minimizers are \emph{not} M-stationary so that it is completely reasonable that we are
not in position to show boundedness of $\{\lambda^k\}_{k\in\N}$ without additional regularity
in the proof of \cref{prop:sequential_stationarity_geometric_constraints}.

Next, we would like to point out that the conditions in
\cref{prop:sequential_stationarity_geometric_constraints}
precisely correspond to the so-called AM-stationarity conditions of \eqref{eq:geometric_constraints},
which were introduced in \cite[Definition~3.1]{Mehlitz2020b} for much more general problems.
This can be seen by employing the scalarization property of the so-called limiting coderivative,
see \cite[Theorem~1.90]{Mordukhovich2006}. 

Finally, let us mention that it is also possible to state the assertion of
\cref{prop:sequential_stationarity_geometric_constraints} in terms of the
regular subdifferential. However, one has to exploit the so-called \emph{fuzzy}
sum rule during the proof, see \cite[Theorem~2.33]{Mordukhovich2006}, since the regular subdifferential does not
obey a classical sum rule. Respecting this, one would have to
replace \eqref{eq:seq_stat_geo_der} by
\begin{equation}\label{eq:sequential_stationarity_Frechet}
	\forall k\in\N\colon\quad 
	\varepsilon^k\in\widehat{\partial}f(z^k_\textup{o})
	+
	\widehat{\partial}\langle\lambda^k,F\rangle(z^k_\textup{c})
\end{equation}
where $\{z^k_\textup{o}\}_{k\in\N},\{z^k_\textup{c}\}_{k\in\N}\subset\R^n$ are
sequences satisfying $z^k_\textup{o}\to\bar z$ and $z^k_\textup{c}\to\bar z$.
By definition of the limiting subdifferential, one can show that, in general, this condition 
is not stronger than the one postulated in \cref{prop:sequential_stationarity_geometric_constraints}
as long as both $f$ and $F$ are nonsmooth around $\bar z$.
Let us also note that when taking the limit in \eqref{eq:sequential_stationarity_Frechet}, one would
end up with a condition in terms of the limiting subdifferential anyway.
That is why we rely on the statement of \cref{prop:sequential_stationarity_geometric_constraints}
in the remainder of the paper.

\section{Asymptotic stationarity and regularity for Lipschitzian nonlinear programs}
	\label{sec:asymptotic_concepts}

In the past, several tools of generalized differentiation have been introduced which allow to
transfer the Karush--Kuhn--Tucker (KKT) theory for standard nonlinear programs with continuously
differentiable data functions to a nonsmooth framework.
Amongst others, let us mention the subdifferential constructions introduced by Clarke and Mordukhovich, 
see \cite{Clarke1983,Mordukhovich2006}, which enjoy (almost) full calculus and can be used
to derive KKT-type necessary optimality conditions for the model problem \eqref{eq:Lipschitzian_program},
see \cite[Section~5.6]{Vinter2000} and \cite[Section~5.1.3]{Mordukhovich2006}.
We will refer to these conditions as the systems of $\cdiff$- and $\partial$-stationarity, respectively.
In the literature, the nomenclatures of Clarke (or simply C-) and M-stationarity are also common, 
but we will avoid these terms here
for some reasons which will become clear in the course of the paper.
For the purpose of completeness, we start our investigations by stating a precise definition of 
$\cdiff$- and $\partial$-stationarity, respectively.

Throughout the section, let $\mathcal Z$ denote the feasible set of \eqref{eq:Lipschitzian_program}.
We implicitly assume  that whenever $\bar z\in\mathcal Z$ is
a fixed feasible point of \eqref{eq:Lipschitzian_program}, 
then the functions $\varphi_0,\ldots,\varphi_{p+q}$ are locally Lipschitz continuous in a neighborhood
of $\bar z$. Furthermore, we make use of the so-called index set associated with inequality constraints
active at $\bar z$ which is given by $I(\bar z):=\{i\in I\,|\,\varphi_i(\bar z)=0\}$.
Finally, recall that $\gdiff$ plays the role of the subdifferential operator
$\cdiff$ or $\partial$.

\begin{definition}\label{def:C_M_stationarity}
	A feasible point $\bar z\in\mathcal Z$ of \eqref{eq:Lipschitzian_program} is called
	$\gdiff$-stationary whenever there are multipliers $\lambda\in\R^{p+q}$ which satisfy
	the following conditions:
	\begin{subequations}\label{eq:C_M_stationarity}
		\begin{align}
			\label{eq:C_M_stationarity_der}
				&0\in\partial\varphi_0(\bar z)
				+
				\sum\limits_{i\in I}\lambda_i\,\gdiff\varphi_i(\bar z)
				+
				\sum\limits_{i\in J}\lambda_i\,
					\bigl(\gdiff\varphi_i(\bar z)\cup\gdiff(-\varphi_i)(\bar z)\bigr),\\
			\label{eq:C_M_stationarity_compl_slack}
				&\forall i\in I\colon\quad \min(\lambda_i,-\varphi_i(\bar z))=0,\\
			\label{eq:C_M_stationarity_equality_constraints}
				&\forall i\in J\colon\quad \lambda_i\geq 0.
		\end{align}
	\end{subequations}
\end{definition}

Although $\partial$-stationarity is sharper than $\cdiff$-stationarity, it might be
beneficial to work with Clarke's subdifferential in some situations since it is far easier
to compute, see \cite[Theorem~2.5.1]{Clarke1983}, 
and its homogeneity allows for an easier calculus, see e.g.\ \cref{sec:bilevel}
where this issue is of essential importance. Similar arguments justify the consideration
of constraint qualifications based on Clarke's subdifferential. Let us point out that in
both stationarity systems, the limiting subdifferential is used as the generalized derivative for
the objective function. At the first glance, this seems to be uncommon. However, since most of
the variational issues one has to face during the theoretical treatment of \eqref{eq:Lipschitzian_program}
are related to the structure of the feasible set, see e.g.\ \cref{sec:MPCCs,sec:bilevel}, 
this choice is reasonable and induces a system of
$\cdiff$-stationarity which is slightly sharper than the classical one from \cite{Vinter2000}.

Let us put the stationarity concepts from \cref{def:C_M_stationarity} into some context. 
For that purpose, let $\Phi\colon\R^n\to\R^{p+q}$ be the vector function whose components
are precisely $\varphi_1,\ldots,\varphi_{p+q}$.
We note that \eqref{eq:Lipschitzian_program}
is a particular instance of the problem \eqref{eq:geometric_constraints} discussed in 
\cref{sec:geometric_constraints} where we fix $f:=\varphi_0$, $F:=\Phi$, and
$K:=\{y\in\R^{p+q}\,|\,\forall i\in I\colon\,y_i\leq 0,\,\forall i\in J\colon\,y_i=0\}$. 
A simple evaluation of the regular normal cone to this particular set $K$ reveals that
\begin{subequations}\label{eq:seq_stat_implicit}
	\begin{align}	
	\label{eq:seq_stat_implicit_der}
	&0\in\partial\varphi_0(\bar z)+\gdiff\langle\tilde\lambda,\Phi\rangle(\bar z),\\
	\label{eq:seq_stat_implicit_mult}
	&\forall i\in I\colon\quad\min(\tilde\lambda_i,-\varphi_i(\bar z))=0
	\end{align}
\end{subequations}
for some multiplier $\tilde\lambda\in\R^{p+q}$ might be a reasonable candidate for the
$\gdiff$-stationarity system as well. The sum rule for Clarke's subdifferential 
as well as its homogeneity, see \cite[Section~2.3]{Clarke1983}, imply
\[
	\cdiff\langle\tilde\lambda,\Phi\rangle(\bar z)
	\subset
	\sum\limits_{i\in I\cup J}\tilde\lambda_i\,\cdiff\varphi_i(\bar z),	
\]
and due to
\[
	\gamma\,\cdiff\psi(z)
	=
	\begin{cases}
		\abs{\gamma}\,\cdiff\psi(z)&\gamma\geq 0,\\
		\abs{\gamma}\,\cdiff(-\psi)(z)&\gamma<0
	\end{cases}
\]
for each function $\psi\colon\R^n\to\overline{\R}$ which is locally Lipschitzian around $z\in\R^n$ and each constant
$\gamma\in\R$, the stationarity condition \eqref{eq:seq_stat_implicit} for
Clarke's subdifferential is slightly stronger than $\cdiff$-stationarity from
\cref{def:C_M_stationarity}. On the other hand, we find the inclusion
\[
	\partial\langle\tilde\lambda,\Phi\rangle(\bar z)
	\subset
	\sum\limits_{i\in I}\tilde\lambda_i\,\partial\varphi_i(\bar z)
	+
	\sum\limits_{i\in J}|\tilde\lambda_i|\bigl(\partial\varphi_i(\bar z)
	\cup\partial(-\varphi_i)(\bar z)\bigr)
\]
by the sum rule for the limiting subdifferential, see \cite[Theorem~3.36]{Mordukhovich2006}, 
and its positive homogeneity. Thus,
\eqref{eq:C_M_stationarity_der} from \cref{def:C_M_stationarity} for the limiting subdifferential
might be weaker than the condition
\eqref{eq:seq_stat_implicit_der} from above. However, the system 
\eqref{eq:C_M_stationarity} is stated in fully explicit way w.r.t.\ the subdifferentials of the
appearing constraint functions. It is, thus, reasonable to work with the generalized stationarity
notions from \cref{def:C_M_stationarity} and not with the potentially sharper
conditions from \eqref{eq:seq_stat_implicit}. 

Recently, the concept of \emph{asymptotic} stationarity has attracted lots of attention due to two
basic observations. First, some algorithms from optimization theory naturally produce a sequence
of iterates whose accumulation points satisfy such asymptotic stationarity conditions.
Second, asymptotic stationarity gives rise to the definition of very weak constraint qualifications.
We refer the interested reader to 
\cite{AndreaniHaeserMartinez2011,AndreaniHaeserSecchinSilva2019,AndreaniMartinezRamosSilva2016,AndreaniMartinezSvaiter2010,AndreaniMartinezRamosSilva2018,BoergensKanzowMehlitzWachsmuth2019,Mehlitz2020b,Ramos2019}
and the references therein for a validation.
Below, we present two natural extensions of asymptotic stationarity which apply to the Lipschitzian
optimization problem \eqref{eq:Lipschitzian_program} and are based on $\cdiff$- and
$\partial$-stationarity from \cref{def:C_M_stationarity}.

\begin{definition}\label{def:asymptotic_stationarity}
	A feasible point $\bar z\in\mathcal Z$ of \eqref{eq:Lipschitzian_program} is 
	called asymptotically $\gdiff$-stationary (A$\gdiff$-stationary for short)
	whenever there are sequences $\{z^k\}_{k\in\N},\{\varepsilon^k\}_{k\in\N}\subset\R^n$ and
	$\{\lambda^k\}_{k\in\N}\subset\R^{p+q}$ which satisfy
	\begin{subequations}\label{eq:asymptotic_stationarity}
		\begin{align}
			\label{eq:asymptotic_stationarity_Lagrangian}
				&\varepsilon^k
				\in 
				\partial\varphi_0(z^k)
				+
				\sum\limits_{i\in I}\lambda^k_i\,\gdiff\varphi_i(z^k)
				+
				\sum\limits_{i\in J}\lambda^k_i\,
					\bigl(\gdiff\varphi_i(z^k)\cup\gdiff(-\varphi_i)(z^k)\bigr),\\
			\label{eq:asymptotic_stationarity_compl_slack}
				&\forall i\in I\colon\quad
				\min(\lambda^k_i,-\varphi_i(\bar z))=0,\\
			\label{eq:asymptotic_stationarity_nonnegativity}
				&\forall i\in J\colon\quad
				\lambda^k_i\geq 0
		\end{align}
	\end{subequations}
	for all $k\in\N$ as well as $z^k\to\bar z$ and $\varepsilon^k\to 0$.
\end{definition}

By definition, each  A$\partial$-stationary point is A$\cdiff$-stationary,
but the converse statement does not hold true in general, see \cref{ex:AM_but_not_AC_regular}.
Referring to the considerations at the beginning of this section, 
we would like to note that due to homogeneity of Clarke's subdifferential,
$\bar z\in\R^n$ is A$\cdiff$-stationary for \eqref{eq:Lipschitzian_program} if and only
if there exist sequences $\{z^k\}_{k\in\N},\{\varepsilon^k\}_{k\in\N}\subset\R^n$ and
$\{\tilde\lambda^k\}_{k\in\N}\subset\R^{p+q}$ which satisfy
$z^k\to\bar z$, $\varepsilon^k\to 0$, as well as
\begin{align*}
	\forall k\in\N\colon\quad
	\varepsilon^k
				\in 
				\partial\varphi_0(z^k)
				+
				\sum\limits_{i\in I\cup J}\tilde\lambda^k_i\,\cdiff\varphi_i(z^k),
	\qquad
				\min(\tilde\lambda^k_i,-\varphi_i(\bar z))=0\quad(i\in I).
\end{align*}
Particularly, the sign condition \eqref{eq:asymptotic_stationarity_nonnegativity} on the
multipliers associated with equality constraints needs to be dropped in this form of the definition.

In the lemma below, we present an equivalent definition of A$\gdiff$-stationarity which
might be more convenient in the light of algorithmic applications since it allows for
certain violations of \eqref{eq:asymptotic_stationarity_compl_slack} and
\eqref{eq:asymptotic_stationarity_nonnegativity}.
\begin{lemma}\label{lem:alternative_definition_of_aymptotic_stationarity}
	A feasible point $\bar z\in\mathcal Z$ of \eqref{eq:Lipschitzian_program} is
	A$\gdiff$-stationary if and only if there are sequences
	$\{z^k\}_{k\in\N},\{\varepsilon^k\}_{k\in\N}\subset\R^n$ and $\{\lambda^k\}_{k\in\N}\subset\R^{p+q}$
	which satisfy $z^k\to\bar z$, $\varepsilon^k\to 0$, 
	\eqref{eq:asymptotic_stationarity_Lagrangian} for each $k\in\N$, as well as
	\begin{subequations}\label{eq:asymptotic_stationarity_alternative}
		\begin{align}
			\label{eq:asymptotic_stationarity_alternative_inequalities}
				&\forall i\in I\colon\quad \lim_{k\to\infty}\min(\lambda^k_i,-\varphi_i(z^k))=0,\\
			\label{eq:asymptotic_stationarity_alternative_equalities}
				&\forall i\in J\colon\quad \liminf_{k\to\infty}\lambda^k_i\geq 0.
		\end{align}
	\end{subequations}
\end{lemma}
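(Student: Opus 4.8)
The plan is to keep the primal sequence $\{z^k\}_{k\in\N}$ and the subgradient selections realizing \eqref{eq:asymptotic_stationarity_Lagrangian} unchanged throughout, and to modify only the multipliers, absorbing the resulting error into the residual sequence $\{\varepsilon^k\}_{k\in\N}$. Two elementary facts are invoked repeatedly: since each $\varphi_i$ is (locally Lipschitz, hence) continuous near $\bar z$ and $z^k\to\bar z$, one has $\varphi_i(z^k)\to\varphi_i(\bar z)$ for every $i\in I\cup J$; and by the local boundedness of $\partial\varphi_0$, $\gdiff\varphi_i$ and $\gdiff(-\varphi_i)$ around $\bar z$ recalled in \cref{sec:variational_analysis}, there is a constant $M>0$ such that, for all large $k$, every element of $\partial\varphi_0(z^k)$, $\gdiff\varphi_i(z^k)$ and $\gdiff(-\varphi_i)(z^k)$ has norm at most $M$.

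For the ``only if'' direction I would start from sequences as in \cref{def:asymptotic_stationarity} and verify \eqref{eq:asymptotic_stationarity_alternative} by distinguishing three index classes. If $i\in I\setminus I(\bar z)$, then $-\varphi_i(\bar z)>0$, so \eqref{eq:asymptotic_stationarity_compl_slack} forces $\lambda^k_i=0$, and since $\varphi_i(z^k)\to\varphi_i(\bar z)<0$ we get $\min(\lambda^k_i,-\varphi_i(z^k))=0$ for all large $k$. If $i\in I(\bar z)$, then \eqref{eq:asymptotic_stationarity_compl_slack} gives $\lambda^k_i\ge 0$, and the sandwich $\min(0,-\varphi_i(z^k))\le\min(\lambda^k_i,-\varphi_i(z^k))\le -\varphi_i(z^k)$ together with $\varphi_i(z^k)\to 0$ yields \eqref{eq:asymptotic_stationarity_alternative_inequalities}. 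Finally \eqref{eq:asymptotic_stationarity_nonnegativity} trivially implies \eqref{eq:asymptotic_stationarity_alternative_equalities}.

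For the ``if'' direction, which is the one that carries content, I would truncate the multipliers: keeping $z^k$ and $\varepsilon^k$-free of interference, set $\hat\lambda^k_i:=\max(\lambda^k_i,0)$ for $i\in I(\bar z)\cup J$ and $\hat\lambda^k_i:=0$ for $i\in I\setminus I(\bar z)$. These satisfy $\min(\hat\lambda^k_i,-\varphi_i(\bar z))=0$ for every $i\in I$ and every $k$, and $\hat\lambda^k_i\ge 0$ for all $i\in J$ and $k$. The crucial claim is $\hat\lambda^k_i-\lambda^k_i\to 0$ for each $i$: for $i\in J$ this follows at once from \eqref{eq:asymptotic_stationarity_alternative_equalities}; for $i\in I(\bar z)$ the inequality $\lambda^k_i\ge\min(\lambda^k_i,-\varphi_i(z^k))$ combined with \eqref{eq:asymptotic_stationarity_alternative_inequalities} gives $\liminf_k\lambda^k_i\ge 0$, hence $\max(\lambda^k_i,0)-\lambda^k_i\to 0$; and for $i\in I\setminus I(\bar z)$ the sequence $-\varphi_i(z^k)$ is eventually bounded away from $0$ by a positive constant, so \eqref{eq:asymptotic_stationarity_alternative_inequalities} forces $\lambda^k_i=\min(\lambda^k_i,-\varphi_i(z^k))$ for large $k$ and thus $\lambda^k_i\to 0$, i.e.\ $\hat\lambda^k_i-\lambda^k_i=-\lambda^k_i\to 0$. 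Now let $g^k_0\in\partial\varphi_0(z^k)$, $g^k_i\in\gdiff\varphi_i(z^k)$ ($i\in I$) and $g^k_i\in\gdiff\varphi_i(z^k)\cup\gdiff(-\varphi_i)(z^k)$ ($i\in J$) be the subgradients realizing \eqref{eq:asymptotic_stationarity_Lagrangian} for $\lambda^k$, and put $\hat\varepsilon^k:=g^k_0+\sum_{i\in I}\hat\lambda^k_i g^k_i+\sum_{i\in J}\hat\lambda^k_i g^k_i$. Then $\hat\varepsilon^k-\varepsilon^k=\sum_{i\in I\cup J}(\hat\lambda^k_i-\lambda^k_i)g^k_i\to 0$ because $\|g^k_i\|\le M$ eventually and $\hat\lambda^k_i-\lambda^k_i\to 0$; hence $\hat\varepsilon^k\to 0$, while by construction $\hat\varepsilon^k$ lies in the right-hand side of \eqref{eq:asymptotic_stationarity_Lagrangian} with multipliers $\hat\lambda^k$. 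This produces sequences of exactly the type required in \cref{def:asymptotic_stationarity}.

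I do not anticipate a genuine obstacle; the only points demanding care are the sign bookkeeping across the three index classes $I\setminus I(\bar z)$, $I(\bar z)$ and $J$, and the observation that replacing $\lambda^k_i$ by $\max(\lambda^k_i,0)$ (resp.\ by $0$) perturbs the Lagrangian residual only by a null sequence — which is precisely where the local boundedness (robustness) of the Clarke and limiting subdifferentials enters.
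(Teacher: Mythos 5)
Your proposal is correct and follows essentially the same route as the paper: the forward direction is the same continuity/sandwich argument over the three index classes, and the backward direction rests on the same key idea of showing the multiplier corrections are null sequences and absorbing them into the residual via local boundedness of the subdifferentials. The only (cosmetic) difference is that you perform the correction in one shot via the truncation $\max(\lambda^k_i,0)$, whereas the paper processes the violating indices iteratively, redefining $\tilde\varepsilon^k$ one index at a time.
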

\begin{proof}
	$[\Longrightarrow]$: If $\bar z$ is A$\gdiff$-stationary, we find sequences
		$\{z^k\}_{k\in\N},\{\varepsilon^k\}_{k\in\N}\subset\R^n$ and
		$\{\lambda^k\}_{k\in\N}\subset\R^{p+q}$ satisfying \eqref{eq:asymptotic_stationarity}
		for each $k\in\N$, $z^k\to\bar z$, and $\varepsilon^k\to 0$.
		This already yields \eqref{eq:asymptotic_stationarity_alternative_equalities}.
		By continuity of $\varphi_i$ at $\bar z$, 
		we find $\varphi_i(z^k)\to\varphi_i(\bar z)\leq 0$ for each
		$i\in I$. Furthermore, \eqref{eq:asymptotic_stationarity_compl_slack} guarantees
		$\lambda^k_i\geq 0$ for all $k\in\N$ and $i\in I$. For $i\in I\setminus I(\bar z)$, we have
		$\lambda^k_i=0$ and $\varphi_i(z^k)<0$ for all sufficiently large $k\in\N$
		which yields $\min(\lambda^k_i,-\varphi_i(z^k))=0$ for large enough $k\in\N$.
		Fixing $i\in I(\bar z)$, we find $\varphi_i(z^k)\to 0$ which yields
		$\min(\lambda^k_i,-\varphi_i(z^k))\to 0$. This shows validity of
		\eqref{eq:asymptotic_stationarity_alternative_inequalities}.
		
	$[\Longleftarrow]$: Assume that there are sequences 
		$\{z^k\}_{k\in\N},\{\varepsilon^k\}_{k\in\N}\subset\R^n$ and 
		$\{\lambda^k\}_{k\in\N}\subset\R^{p+q}$ which satisfy $z^k\to\bar z$, $\varepsilon^k\to 0$,
		\eqref{eq:asymptotic_stationarity_Lagrangian} for each $k\in\N$, and
		\eqref{eq:asymptotic_stationarity_alternative}.
		Thus, for each $i\in I$, we find a sequence $\{\xi^k_i\}_{k\in\N}\subset\R^n$ with
		$\xi^k_i\in\gdiff\varphi_i(z^k)$ for each $k\in\N$, and for each $i\in J$, we find a
		sequence $\{\eta^k_i\}_{k\in\N}\subset\R^n$ with
		$\eta^k_i\in\gdiff\varphi_i(z^k)\cup\gdiff(-\varphi_i)(z^k)$ for each $k\in\N$ such that
		\[
			\varepsilon^k\in
			\partial\varphi_0(z^k)
			+
			\sum\nolimits_{i\in I}\lambda^k_i\,\xi^k_i
			+
			\sum\nolimits_{i\in J}\lambda^k_i\,\eta^k_i
		\]
		holds for each $k\in\N$. Due to $z^k\to\bar z$ and local Lipschitzness of
		$\varphi_1,\ldots,\varphi_{p+q}$ around $\bar z$, the sequences
		$\{\xi^k_i\}_{k\in\N}$ ($i\in I$) and $\{\eta^k_i\}_{k\in\N}$ ($i\in J$) are bounded.
		
		Assume that there is some index $j\in I$ such that 
		$\min(\lambda^k_j,-\varphi_j(\bar z))<0$ holds along a subsequence (without relabeling).
		Due to $\varphi_j(\bar z)\leq 0$, this yields $\lambda^k_j<0$ for all $k\in\N$.
		Invoking \eqref{eq:asymptotic_stationarity_alternative_inequalities} and 
		$\varphi_j(z^k)\to\varphi_j(\bar z)$, we find $\lambda^k_j\to 0$. 
		This yields $\lambda^k_j\,\xi^k_j\to 0$, and defining 
		$\tilde\varepsilon^k:=\varepsilon^k-\lambda^k_j\,\xi^k_j$ 
		as well as $\tilde\lambda^k_j:=0$ for each $k\in\N$ yields $\tilde\varepsilon^k\to 0$
		and
		\begin{equation}\label{eq:adjusted_explicit_asymptotic_condition}
			\tilde\varepsilon^k
			\in 
			\partial\varphi_0(z^k)
			+
			\sum\nolimits_{i\in I\setminus\{j\}}\lambda^k_i\,\xi^k_i
			+
			\sum\nolimits_{i\in J}\lambda^k_i\,\eta^k_i
			+
			\tilde\lambda^k_j\,\gdiff\varphi_j(z^k)
		\end{equation}
		as well as $\min(\tilde\lambda^k_j,-\varphi_j(\bar z))=0$ for each $k\in\N$.
		
		Next, we assume that there is some $j\in I$ such that
		$\min(\lambda^k_j,-\varphi_j(\bar z))>0$ holds along a subsequence (without relabeling).
		This yields $\lambda^k_j>0$ for all $k\in\N$ and $j\in I\setminus I(\bar z)$. Due to
		$\varphi_j(z^k)\to\varphi_j(\bar z)<0$,
		\eqref{eq:asymptotic_stationarity_alternative_inequalities}
		yields $\lambda^k_j\to 0$, i.e., $\lambda^k_j\,\xi^k_j\to 0$. Again, we set
		$\tilde\varepsilon^k:=\varepsilon^k-\lambda^k_j\,\xi^k_j$ and $\tilde\lambda^k_j:=0$ in order
		to find \eqref{eq:adjusted_explicit_asymptotic_condition} and
		$\min(\tilde\lambda^k_j,-\varphi_j(\bar z))=0$ for each $k\in\N$ as well as
		$\tilde\varepsilon^k\to 0$.
		
		Finally, assume that there is some $j\in J$ such that $\lambda^k_j<0$ holds along
		a subsequence (without relabeling). 
		Then \eqref{eq:asymptotic_stationarity_alternative_equalities} guarantees
		$\lambda^k_j\to 0$ which yields $\lambda^k_j\,\eta^k_j\to 0$. We set
		$\tilde\varepsilon^k:=\varepsilon^k-\lambda^k_j\,\eta^k_j$ and $\tilde\lambda^k_j:=0$
		and find
		\[
			\tilde\varepsilon^k
			\in 
			\partial\varphi_0(z^k)
			+
			\sum\nolimits_{i\in I}\lambda^k_i\,\xi^k_i
			+
			\sum\nolimits_{i\in J\setminus\{j\}}\lambda^k_i\,\eta^k_i
			+
			\tilde\lambda^k_j\,(\gdiff\varphi_j(z^k)\cup\gdiff(-\varphi_j)(z^k))
		\]
		for each $k\in\N$ as well as $\tilde\varepsilon^k\to 0$.
		
		Performing the above transformations iteratively for each index $j\in I\cup J$
		where a violation of \eqref{eq:asymptotic_stationarity_compl_slack}
		or \eqref{eq:asymptotic_stationarity_nonnegativity} occurs, we
		can \emph{hide} these asymptotic violations, restricted via
		\eqref{eq:asymptotic_stationarity_alternative_inequalities} and
		\eqref{eq:asymptotic_stationarity_alternative_equalities}, in the definition
		of $\{\varepsilon^k\}_{k\in\N}$.
		Hence, $\bar z$ is A$\gdiff$-stationary.
\end{proof}

Let us now invoke \cref{prop:sequential_stationarity_geometric_constraints}.
Exploiting the sum rule and positive homogeneity of the limiting subdifferential,
the following result follows easily by similar considerations as presented after
\cref{def:C_M_stationarity}.
\begin{theorem}\label{thm:local_minimizers_are_asymptotically_stationary}
	If $\bar z\in\mathcal Z$ is a local minimizer of \eqref{eq:Lipschitzian_program}, 
	then it is an A$\partial$-stationary point of this program.
\end{theorem}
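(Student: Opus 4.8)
The plan is to read \eqref{eq:Lipschitzian_program} as the special instance of \eqref{eq:geometric_constraints} obtained by setting $f:=\varphi_0$, $F:=\Phi$ (the vector function with components $\varphi_1,\ldots,\varphi_{p+q}$), and
\[
	K:=\{y\in\R^{p+q}\,|\,y_i\leq 0\ (i\in I),\ y_i=0\ (i\in J)\},
\]
which is a convex polyhedral set, so that $\mathcal Z$ coincides with the feasible set of this auxiliary problem and $\bar z$ is one of its local minimizers. Since $\varphi_0$ and each $\varphi_i$ are assumed to be locally Lipschitz continuous around $\bar z$, so are $f$ and $F$, and \cref{prop:sequential_stationarity_geometric_constraints} applies: it provides sequences $z^k\to\bar z$, $\varepsilon^k\to 0$, and $\{\lambda^k\}_{k\in\N}\subset\R^{p+q}$ with
\[
	\varepsilon^k\in\partial\varphi_0(z^k)+\partial\langle\lambda^k,\Phi\rangle(z^k),
	\qquad
	\lambda^k\in\widehat{\mathcal N}_K(\Phi(\bar z))
\]
for all $k\in\N$.

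Next I would evaluate the regular normal cone to $K$ at $\Phi(\bar z)$. By the product structure of $K$, the second inclusion is equivalent to $\lambda^k_i=0$ for $i\in I\setminus I(\bar z)$, $\lambda^k_i\geq 0$ for $i\in I(\bar z)$, and no sign restriction for $i\in J$; in particular $\lambda^k_i\geq 0$ and $\min(\lambda^k_i,-\varphi_i(\bar z))=0$ hold for every $i\in I$ and every $k$, which is exactly \eqref{eq:asymptotic_stationarity_compl_slack}. It then remains to massage \eqref{eq:asymptotic_stationarity_Lagrangian} and to repair the sign condition \eqref{eq:asymptotic_stationarity_nonnegativity} on the multipliers associated with $J$. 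Writing $\langle\lambda^k,\Phi\rangle=\sum_{i\in I\cup J}\lambda^k_i\,\varphi_i$ and combining the sum rule for the limiting subdifferential \cite[Theorem~3.36]{Mordukhovich2006} with its positive homogeneity and the identity $\partial(\gamma\psi)(z)=|\gamma|\,\partial\psi(z)$ if $\gamma\geq 0$ and $\partial(\gamma\psi)(z)=|\gamma|\,\partial(-\psi)(z)$ if $\gamma<0$ (valid for locally Lipschitzian $\psi$), one obtains
\[
	\partial\langle\lambda^k,\Phi\rangle(z^k)
	\subset
	\sum_{i\in I}\lambda^k_i\,\partial\varphi_i(z^k)
	+
	\sum_{i\in J}|\lambda^k_i|\bigl(\partial\varphi_i(z^k)\cup\partial(-\varphi_i)(z^k)\bigr),
\]
where $\lambda^k_i\geq 0$ for $i\in I$ was used.

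Finally I would set $\hat\lambda^k_i:=\lambda^k_i$ for $i\in I$ and $\hat\lambda^k_i:=|\lambda^k_i|$ for $i\in J$. Then $\{\hat\lambda^k\}_{k\in\N}$ satisfies \eqref{eq:asymptotic_stationarity_nonnegativity}, still satisfies \eqref{eq:asymptotic_stationarity_compl_slack} since the latter only constrains the $I$-components, and the displayed inclusion together with the relation $\varepsilon^k\in\partial\varphi_0(z^k)+\partial\langle\lambda^k,\Phi\rangle(z^k)$ yields \eqref{eq:asymptotic_stationarity_Lagrangian} with $\gdiff=\partial$. Hence $\bar z$ is A$\partial$-stationary. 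I do not anticipate any genuine obstacle; the only point requiring care is the sign bookkeeping for $i\in J$, namely absorbing the sign of $\lambda^k_i$ into the choice between $\partial\varphi_i(z^k)$ and $\partial(-\varphi_i)(z^k)$ so that nonnegative multipliers suffice — precisely the device already exploited in the discussion following \cref{def:C_M_stationarity}.
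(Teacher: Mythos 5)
Your proposal is correct and follows exactly the route the paper intends: it specializes \eqref{eq:Lipschitzian_program} to \eqref{eq:geometric_constraints} with the polyhedral set $K$ described after \cref{def:C_M_stationarity}, invokes \cref{prop:sequential_stationarity_geometric_constraints}, and then uses the sum rule and positive homogeneity of the limiting subdifferential together with the sign-absorption device $\lambda_i\,\partial\varphi_i=|\lambda_i|\,\partial(-\varphi_i)$ for $\lambda_i<0$ — which is precisely the argument the paper compresses into its one-sentence proof. Your write-up simply makes the bookkeeping explicit; no gaps.
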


From the above result, we immediately see that each local minimizer of \eqref{eq:Lipschitzian_program}
is A$\cdiff$-stationary as well.
Using Clarke's subdifferential, approximate KKT-type necessary optimality conditions
can be found in \cite{DuttaDebTulshyanArora2013}. 
Furthermore, we would like to mention the recently published paper \cite{HelouSantosSimoes2020} 
where Goldstein's $\varepsilon$-subdifferential construction
is used to design a sequential stationarity condition for Lipschitzian programs. 
The authors stated an implementable algorithm which computes asymptotically stationary points
in their sense. On the other hand, Goldstein's $\varepsilon$-subdifferential is even larger than
Clarke's subdifferential (w.r.t.\ set inclusion) and, thus, provides very weak stationarity 
conditions. Furthermore, its numerical computation is quite challenging since it is likely to
be set-valued for each point from the underlying function's domain.

Next, we present a simple observation regarding the sequential stationarity notions from
\cref{def:asymptotic_stationarity}. Its proof is based on the outer semicontinuity of
the Clarke and limiting subdifferential as well as their local boundedness for locally
Lipschitzian functions and, for large parts, can be distilled from
\cite[Lemma~3.4]{Mehlitz2020b}. It basically says that each asymptotically $\gdiff$-stationary
point of \eqref{eq:Lipschitzian_program} satisfies a Fritz--John-type condition based on the
subdifferential construction $\gdiff$. Simple examples indicate, however, that A$\gdiff$-stationarity
is, in general, stronger than these Fritz--John-type conditions.
\begin{lemma}\label{lem:Fritz_John_conditions}
	Let $\bar z\in\mathcal Z$ be an A$\gdiff$-stationary point of
	\eqref{eq:Lipschitzian_program} such that the sequences 
	$\{z^k\}_{k\in\N},\{\varepsilon^k\}_{k\in\N}\subset\R^n$ and $\{\lambda^k\}_{k\in\N}\subset\R^{p+q}$
	with $z^k\to\bar z$ and $\varepsilon^k\to 0$
	satisfy \eqref{eq:asymptotic_stationarity} for each $k\in\N$.
	Then the following assertions hold.
	\begin{enumerate}
		\item If $\{\lambda^k\}_{k\in\N}$ is bounded, then $\bar z$ is $\gdiff$-stationary.
		\item If $\{\lambda^k\}_{k\in\N}$ is not bounded, then we find a nonzero vector
			$\lambda\in\R^{p+q}$ which
			satisfies \eqref{eq:C_M_stationarity_compl_slack},
			\eqref{eq:C_M_stationarity_equality_constraints}, and
			\[
				0\in
				\sum\limits_{i\in I}\lambda_i\,\gdiff\varphi_i(\bar z)
				+
				\sum\limits_{i\in J}\lambda_i\,
					\bigl(\gdiff\varphi_i(\bar z)\cup\gdiff(-\varphi_i)(\bar z)\bigr).
			\]
	\end{enumerate}
\end{lemma}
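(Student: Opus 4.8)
The plan is to normalize the multiplier sequence and pass to a limit, exploiting the robustness (outer semicontinuity) and local boundedness of the subdifferential $\gdiff$ for locally Lipschitzian data. For part (a), assume $\{\lambda^k\}_{k\in\N}$ is bounded. First I would extract, as in the proof of \cref{lem:alternative_definition_of_aymptotic_stationarity}, selection sequences $\{\xi^k_i\}_{k\in\N}$ ($i\in I$) with $\xi^k_i\in\gdiff\varphi_i(z^k)$, $\{\eta^k_i\}_{k\in\N}$ ($i\in J$) with $\eta^k_i\in\gdiff\varphi_i(z^k)\cup\gdiff(-\varphi_i)(z^k)$, and $\{\zeta^k\}_{k\in\N}$ with $\zeta^k\in\partial\varphi_0(z^k)$, so that $\varepsilon^k=\zeta^k+\sum_{i\in I}\lambda^k_i\,\xi^k_i+\sum_{i\in J}\lambda^k_i\,\eta^k_i$ for each $k$. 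Since $z^k\to\bar z$ and $\varphi_0,\ldots,\varphi_{p+q}$ are locally Lipschitz around $\bar z$, the local boundedness of $\gdiff$ makes all these selection sequences bounded; combined with boundedness of $\{\lambda^k\}_{k\in\N}$, we may pass to a common subsequence along which $\lambda^k\to\lambda$, $\zeta^k\to\zeta$, $\xi^k_i\to\xi_i$, $\eta^k_i\to\eta_i$, and (for $i\in J$) the membership $\eta^k_i\in\gdiff\varphi_i(z^k)$ or $\eta^k_i\in\gdiff(-\varphi_i)(z^k)$ holds for all $k$ in the subsequence with a fixed choice. Outer semicontinuity of $\gdiff$ at $\bar z$ then yields $\zeta\in\partial\varphi_0(\bar z)$, $\xi_i\in\gdiff\varphi_i(\bar z)$, and $\eta_i\in\gdiff\varphi_i(\bar z)\cup\gdiff(-\varphi_i)(\bar z)$. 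Taking $k\to\infty$ in the displayed Lagrangian identity and using $\varepsilon^k\to 0$ gives $0=\zeta+\sum_{i\in I}\lambda_i\,\xi_i+\sum_{i\in J}\lambda_i\,\eta_i$, i.e.\ \eqref{eq:C_M_stationarity_der}. The conditions \eqref{eq:asymptotic_stationarity_compl_slack} and \eqref{eq:asymptotic_stationarity_nonnegativity} are stated at the fixed point $\bar z$ (not at $z^k$), so they pass to the limit verbatim, giving \eqref{eq:C_M_stationarity_compl_slack} and \eqref{eq:C_M_stationarity_equality_constraints}. Hence $\bar z$ is $\gdiff$-stationary.

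For part (b), assume $\{\lambda^k\}_{k\in\N}$ is unbounded. Set $t^k:=\big(1+\sum_{i\in I\cup J}|\lambda^k_i|\big)$ (or simply $\|\lambda^k\|$ once it is large), so $t^k\to\infty$ along a subsequence, and put $\bar\lambda^k:=\lambda^k/t^k$, which lies in a compact set, so after passing to a further subsequence $\bar\lambda^k\to\lambda$ with $\|\lambda\|=1$ (using the appropriate normalization; with $t^k=\|\lambda^k\|$ this is immediate), in particular $\lambda\neq 0$. Dividing the Lagrangian identity by $t^k$ gives $\varepsilon^k/t^k\in\frac{1}{t^k}\partial\varphi_0(z^k)+\sum_{i\in I}\bar\lambda^k_i\,\xi^k_i+\sum_{i\in J}\bar\lambda^k_i\,\eta^k_i$; here $\varepsilon^k/t^k\to 0$ and, since $\{\zeta^k\}_{k\in\N}$ is bounded, $\zeta^k/t^k\to 0$ as well. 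Passing to the limit with the same outer-semicontinuity and subsequencing argument as in part (a), together with boundedness of the selection sequences $\{\xi^k_i\}_{k\in\N}$, $\{\eta^k_i\}_{k\in\N}$, we obtain $0\in\sum_{i\in I}\lambda_i\,\gdiff\varphi_i(\bar z)+\sum_{i\in J}\lambda_i\,(\gdiff\varphi_i(\bar z)\cup\gdiff(-\varphi_i)(\bar z))$. The sign conditions survive scaling by the positive factor $1/t^k$ and then the limit, so $\lambda$ still satisfies \eqref{eq:asymptotic_stationarity_nonnegativity} in the limit, i.e.\ $\lambda_i\geq 0$ for $i\in J$, which is \eqref{eq:C_M_stationarity_equality_constraints}; likewise $\lambda_i\geq 0$ for $i\in I$, and for $i\in I\setminus I(\bar z)$ we have $\lambda^k_i=0$ for all $k$ hence $\lambda_i=0$, giving $\min(\lambda_i,-\varphi_i(\bar z))=0$, i.e.\ \eqref{eq:C_M_stationarity_compl_slack}.

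The only delicate point is bookkeeping for the union $\gdiff\varphi_i(\bar z)\cup\gdiff(-\varphi_i)(\bar z)$ appearing for $i\in J$: along the original sequence the choice between $\gdiff\varphi_i(z^k)$ and $\gdiff(-\varphi_i)(z^k)$ may alternate with $k$, but since there are only two options and finitely many indices, a pigeonhole argument lets us pass to a subsequence on which each such choice is constant, after which outer semicontinuity of the relevant single operator applies. Everything else is the standard normalization-and-limit machinery, and since the complementary-slackness and sign conditions are anchored at $\bar z$ rather than at $z^k$, they pose no obstruction. I would also remark that, since $\{\zeta^k\}_{k\in\N}\subset\partial\varphi_0(z^k)$ is automatically bounded, the dichotomy (a)/(b) is governed solely by $\{\lambda^k\}_{k\in\N}$, as stated.
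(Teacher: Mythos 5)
Your argument is correct and follows exactly the route the paper indicates (it omits the proof, noting only that it rests on outer semicontinuity and local boundedness of $\gdiff$ for locally Lipschitzian functions, as distilled from the cited reference): bounded selections plus subsequence extraction for part (a), and normalization of the multipliers with vanishing of the objective term for part (b). The pigeonhole handling of the union $\gdiff\varphi_i\cup\gdiff(-\varphi_i)$ and the observation that the sign/complementarity conditions are anchored at $\bar z$ are the right details to make explicit; no gaps.
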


Below, we are going to interrelate $\gdiff$- and A$\gdiff$-stationary
points of \eqref{eq:Lipschitzian_program}. 
Therefore, we fix a feasible point $\bar z\in\mathcal Z$ of \eqref{eq:Lipschitzian_program}.
Let us introduce a set-valued mapping $\MM^\square\colon\R^n\tto\R^n$ by means of
\[
	\MM^{\square}(z):=
	\left\{
		\sum_{i\in I(\bar z)}\lambda_i\,\gdiff\varphi_i(z)
		+
		\sum_{i\in J}\lambda_i\,\bigl(\gdiff\varphi_i(z)\cup\gdiff(-\varphi_i)(z)\bigr)
		\,\middle|\,
			\lambda_i\geq 0\,(i\in I(\bar z)\cup J)
	\right\}
\]
for each $z\in\R^n$. Note that this map explicitly depends on $\bar z$ since the set
$I(\bar z)$ appears.
The definition of $\MM^{\square}$ directly shows that $\bar z$ is $\gdiff$-stationary
for \eqref{eq:Lipschitzian_program} if and only if 
$\partial\varphi_0(\bar z)\cap(-\MM^{\square}(\bar z))\neq\varnothing$ holds.
Furthermore, by definition of A$\gdiff$-stationarity, we find the following result.
Its proof is analogous to the one of \cite[Lemma~3.6]{Mehlitz2020b} and, again, basically exploits
outer semicontinuity and local boundedness of the respective subdifferential as well as the
definition of the outer set limit.
\begin{lemma}\label{lem:AC_stationarity_via_M}
	Let $\bar z\in\mathcal Z$ be a feasible point of \eqref{eq:Lipschitzian_program}.
	Then the following assertions hold.
	\begin{enumerate}
		\item[(a)] If $\bar z$ is an A$\gdiff$-stationary point 
			of \eqref{eq:Lipschitzian_program}, then 
			$\partial\varphi_0(\bar z)\cap\bigl(-\limsup\nolimits_{z\to\bar z}\MM^{\square}(z)\bigr)
				\neq\varnothing$.
		\item[(b)] If $\varphi_0$ is continuously differentiable at $\bar z\in\R^n$ while
			$-\nabla \varphi_0(\bar z)\in\limsup\nolimits_{z\to\bar z}\MM^{\square}(z)$
			is valid, then $\bar z$ is an A$\gdiff$-stationary 
			point of \eqref{eq:Lipschitzian_program}.
	\end{enumerate}
\end{lemma}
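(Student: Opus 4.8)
The plan is to derive both parts straight from the definitions, using only the robustness (outer semicontinuity) and local boundedness of $\gdiff$ together with the very definition of the Painlev\'e--Kuratowski outer limit; no constraint qualification is invoked, and the argument parallels the one for \cite[Lemma~3.6]{Mehlitz2020b}. The one bookkeeping point to keep in mind throughout is the interplay between the active index set $I(\bar z)$, which defines $\MM^{\square}$, and the full set $I$, which appears in \cref{def:asymptotic_stationarity}: the complementarity condition \eqref{eq:asymptotic_stationarity_compl_slack} kills the inactive multipliers, so the two formulations can be matched.

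For part~(a), I would start from the sequences $\{z^k\}_{k\in\N},\{\varepsilon^k\}_{k\in\N}\subset\R^n$ and $\{\lambda^k\}_{k\in\N}\subset\R^{p+q}$ provided by A$\gdiff$-stationarity, so that \eqref{eq:asymptotic_stationarity} holds for each $k$, $z^k\to\bar z$, and $\varepsilon^k\to 0$. Condition \eqref{eq:asymptotic_stationarity_compl_slack} forces $\lambda^k_i=0$ for $i\in I\setminus I(\bar z)$ and $\lambda^k_i\geq 0$ for $i\in I(\bar z)$, while \eqref{eq:asymptotic_stationarity_nonnegativity} gives $\lambda^k_i\geq 0$ for $i\in J$; hence the sum over $I$ in \eqref{eq:asymptotic_stationarity_Lagrangian} collapses (the inactive terms contribute only $\{0\}$ since each subdifferential set is nonempty) to a sum over $I(\bar z)$ with nonnegative coefficients, i.e.\ to an element of $\MM^{\square}(z^k)$. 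Choosing $\xi^k\in\partial\varphi_0(z^k)$ realizing \eqref{eq:asymptotic_stationarity_Lagrangian}, we get $\varepsilon^k-\xi^k\in\MM^{\square}(z^k)$ for each $k$. By the standing local Lipschitzianity of $\varphi_0$ around $\bar z$, $\partial\varphi_0$ is locally bounded there, so along a subsequence $\xi^k\to\xi$, and robustness of $\partial$ yields $\xi\in\partial\varphi_0(\bar z)$. Since $\varepsilon^k\to 0$, we obtain $\varepsilon^k-\xi^k\to-\xi$ with $\varepsilon^k-\xi^k\in\MM^{\square}(z^k)$ and $z^k\to\bar z$, so $-\xi\in\limsup_{z\to\bar z}\MM^{\square}(z)$ by definition of the outer limit; thus $\xi$ witnesses $\partial\varphi_0(\bar z)\cap\bigl(-\limsup_{z\to\bar z}\MM^{\square}(z)\bigr)\neq\varnothing$.

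For part~(b), the membership $-\nabla\varphi_0(\bar z)\in\limsup_{z\to\bar z}\MM^{\square}(z)$ hands us, by definition of the outer limit, sequences $z^k\to\bar z$ and $w^k\to-\nabla\varphi_0(\bar z)$ with $w^k\in\MM^{\square}(z^k)$, and, by the definition of $\MM^{\square}$, multipliers $\lambda^k_i\geq 0$ ($i\in I(\bar z)\cup J$) and subgradients realizing each $w^k$. I would complete the multiplier vector by setting $\lambda^k_i:=0$ for $i\in I\setminus I(\bar z)$ and put $\varepsilon^k:=\nabla\varphi_0(z^k)+w^k$. Since $\varphi_0$ is continuously differentiable at $\bar z$, for all large $k$ one has $\partial\varphi_0(z^k)=\{\nabla\varphi_0(z^k)\}$ and $\nabla\varphi_0(z^k)\to\nabla\varphi_0(\bar z)$, whence $\varepsilon^k\to 0$ and \eqref{eq:asymptotic_stationarity_Lagrangian} holds (reintroducing the vanishing inactive terms changes nothing). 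Finally \eqref{eq:asymptotic_stationarity_compl_slack} and \eqref{eq:asymptotic_stationarity_nonnegativity} hold trivially: for $i\in I(\bar z)$ we have $\varphi_i(\bar z)=0$ and $\lambda^k_i\geq 0$, for $i\in I\setminus I(\bar z)$ we have $\lambda^k_i=0$ and $\varphi_i(\bar z)<0$, and for $i\in J$ we have $\lambda^k_i\geq 0$ by construction. Hence $\bar z$ is A$\gdiff$-stationary.

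I do not anticipate a real obstacle: the substance is exactly the combination of local boundedness, robustness of $\gdiff$, and the definition of the outer limit, and the only care needed is the index-set bookkeeping described above and ensuring the gradient of $\varphi_0$ is available at $z^k$ for large $k$ in part~(b). It is worth noting what the lemma does \emph{not} assert: $\MM^{\square}$-membership here is not realized by a bounded multiplier sequence, so if such a normalized statement were desired one would have to add a rescaling argument in the spirit of \cref{lem:Fritz_John_conditions}.
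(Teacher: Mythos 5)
Your argument is correct and is precisely the route the paper intends: the paper omits the proof, stating only that it is analogous to \cite[Lemma~3.6]{Mehlitz2020b} and rests on outer semicontinuity and local boundedness of the subdifferential together with the definition of the outer limit, which is exactly the combination you deploy. The index-set bookkeeping (using \eqref{eq:asymptotic_stationarity_compl_slack} to annihilate the inactive multipliers so that the sum over $I$ matches the sum over $I(\bar z)$ defining $\MM^{\square}$) is handled correctly in both directions.
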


Taking \cref{thm:local_minimizers_are_asymptotically_stationary} and
\cref{lem:AC_stationarity_via_M} together, the definition of the following constraint
qualifications is reasonable.
\begin{definition}\label{def:asymptotic_regularity}
	Let $\bar z\in\mathcal Z$ be a feasible point of \eqref{eq:Lipschitzian_program}.
	\begin{enumerate}
		\item We call $\bar z$ \emph{asymptotically $\gdiff$-regular} (A$\gdiff$-regular for short)
			whenever the condition
			$\limsup\nolimits_{z\to\bar z}\MM^{\square}(z)\subset \MM^\square(\bar z)$
			is valid, i.e., if $\MM^\square$ is outer semicontinuous at $\bar z$.
		\item We call $\bar z$ \emph{weakly asymptotically $\partial$-regular}
			(wA$\partial$-regular for short)
			whenever the condition
			$\limsup\nolimits_{z\to\bar z}\MM(z)\subset \MM^\textup{c}(\bar z)$
			is valid.
	\end{enumerate}
\end{definition}

We would like to point out that A$\gdiff$-regularity and wA$\partial$-regularity reduce 
to the so-called \emph{cone continuity property} from \cite[Definition~3.1]{AndreaniMartinezRamosSilva2016},
which is sometimes referred to as AKKT-regularity, whenever the functions 
$\varphi_i$ ($i\in I(\bar z)\cup J$) are continuously differentiable at $\bar z$.
In the general nonsmooth setting, however, we only get the relations
\[
	\text{A$\partial$-regularity}\quad\Longrightarrow\quad\text{wA$\partial$-regularity,}
	\qquad
	\text{A$\cdiff$-regularity}\quad\Longrightarrow\quad\text{wA$\partial$-regularity.}
\]
The following examples underline that A$\partial$-regularity and A$\cdiff$-regularity
are independent of each other. Note that both of these examples are stated in the context of
complementarity-constrained optimization, see \cref{sec:MPCCs} as well.
Actually, \cref{ex:AM_but_not_AC_regular} is taken from 
\cite[Example~6]{AndreaniHaeserSecchinSilva2019} where it is used to visualize 
closely related issues.
Finally, let us mention that these examples also indicate that wA$\partial$-regularity is strictly
weaker than $\partial$- and $\cdiff$-regularity.
\begin{example}\label{ex:AC_but_not_AM_regular}
	We consider the feasible region $\mathcal Z\subset\R^2$ modeled by
	\[
		\mathcal Z:=\{z\in\R^2\,|\,\varphi_1(z):=z_1^3-z_2\leq 0,\,\varphi_2(z):=\min(z_1,z_2)=0\}
	\]
	at $\bar z:=(0,0)$. 
	For the computation of the subdifferentials associated with $\varphi_2$, we refer the reader
	to \cref{lem:composed_subdifferential_of_minimum_function}. 
	Exploiting $z^k:=((3k)^{-1/2},0)$, we find
	\[
		(1,1)
		=
		k\,(1/k,-1)+(k+1)\,(0,1)
		\in
		k\,\nabla\varphi_1(z^k)+(k+1)\partial\varphi_2(z^k)
		\subset
		\MM(z^k)
	\]
	for each $k\in\N$. Thus, due to
	$\MM(\bar z)=\{\eta\in\R^2\,|\,\eta_1=0\,\lor\,\eta_2\leq 0\}$,
	$\bar z$ is not A$\partial$-regular.
	
	On the other hand, we find
	$\MM^\textup{c}(\bar z)=\{\eta\in\R^2\,|\,\eta_1\geq 0\,\lor\,\eta_2\leq 0\}$.
	Suppose now that there is some $\eta\in\limsup_{z\to\bar z}\MM^\textup{c}(z^k)$ which satisfies
	$\eta_1<0$ and $\eta_2>0$, i.e., that $\bar z$ is not 
	A$\cdiff$-regular. Then we find $\{z^k\}_{k\in\N},\{\eta^k\}_{k\in\N}\subset\R^2$ 
	such that $z^k\to\bar z$, $\eta^k\to\eta$, and
	$\eta^k\in\MM^\textup{c}(z^k)$ for all $k\in\N$, i.e., there are sequences 
	$\{\lambda^k_1\}_{k\in\N},\{\lambda^k_2\}_{k\in\N}\subset\R_+$ and
	$\{\xi^k\}_{k\in\N}\subset\R^2$ such that
	\[
		\forall k\in\N\colon\quad
		\eta^k=\lambda^k_1(3(z^k_1)^2,-1)
		+
		\lambda^k_2\,\xi^k,\qquad
		\xi^k\in\cdiff\varphi_2(z^k)\cup\cdiff(-\varphi_2)(z^k).
	\]
	Due to $\eta^k_1<0$ for sufficiently large $k\in\N$, we find $\xi^k_1<0$ for sufficiently large
	$k\in\N$. This is only
	possible if $z^k_1\leq z^k_2$ is valid for sufficiently large $k\in\N$.
	In these situations, we have $\xi^k_2\leq 0$ as well, i.e., $\eta^k_2\leq 0$ follows for
	large enough $k\in\N$. This, however, contradicts $\eta_2>0$.
	Thus, $\bar z$ is A$\cdiff$-regular.
\end{example}

\begin{example}\label{ex:AM_but_not_AC_regular}
	Let us investigate the feasible region $\mathcal Z\subset\R^3$ given by
	\[
		\mathcal Z:=
		\left\{z\in\R^3\,\middle|\,
			\begin{aligned}
				&\varphi_1(z):=-z_1\leq 0,\,\varphi_2(z):=-z_3\leq 0,\\
				&\varphi_3(z):=\min(z_1^3+z_2+z_3,z_1^3-z_2+z_3)=0
			\end{aligned}
		\right\}
	\]
	at $\bar z:=(0,0,0)$. For the computation of the subdifferentials associated with
	$\varphi_3$, we make use of \cref{lem:composed_subdifferential_of_minimum_function}
	again.
	Considering $z^k:=((3k/2)^{-1/2},0,0)$, we find
	\begin{align*}
		(1,0,0)
		&=
		(-1,0,0)+k\,(0,0,-1)+k\,(2/k,0,1)\\
		&\in 
		\nabla\varphi_1(z^k)+k\,\nabla\varphi_2(z^k)+k\,\cdiff\varphi_3(z^k)
		\subset
		\MM^\textup{c}(z^k)
	\end{align*}
	for each $k\in\N$. On the other hand, a simple calculation reveals 
	$\MM^\textup{c}(\bar z)\subset\R_-\times\R^2$ which means that $\bar z$ cannot be 
	A$\cdiff$-regular.
	
	One can show that $\MM(\bar z)=\{\eta\in\R^3\,|\,\eta_1\leq 0,\,\eta_3\leq|\eta_2|\}$ holds.
	Fix some $\eta\in\limsup_{z\to\bar z}\MM(z)$. 
	Then we find sequences $\{z^k\}_{k\in\N},\{\eta^k\}_{k\in\N}\subset\R^3$ such that
	$z^k\to\bar z$, $\eta^k\to\eta$, and $\eta^k\in\MM(z^k)$ for each $k\in\N$.
	One can easily check that
	\[
		\partial\varphi_3(z)\cup\partial(-\varphi_3)(z)
		\subset
		\{(3z_1^2,\pm1,1)\}\cup\{(-3z_1^2,\alpha,-1)\,|\,\alpha\in[-1,1]\}
	\]
	holds for all $z\in\R^3$, and this reveals
	that $\eta^k_3\leq|\eta^k_2|$ is valid for all $k\in\N$.
	Taking the limit $k\to\infty$ yields $\eta_3\leq|\eta_2|$.
	Supposing that $\eta_1>0$ holds, there are sequences
	$\{\lambda^k_1\}_{k\in\N},\{\lambda^k_2\}_{k\in\N},\{\lambda^k_3\}_{k\in\N}\subset\R_+$ such that
	\[
		\eta^k=\lambda^k_1(-1,0,0)+\lambda^k_2(0,0,-1)+\lambda^k_3(3(z_1^k)^2,\pm1,1)
	\]
	is valid for sufficiently large $k\in\N$. Inspecting the second component, $\{\lambda^k_3\}_{k\in\N}$
	needs to be convergent since $\{\eta^k_2\}_{k\in\N}$ converges to $\eta_2$.
	Thus, we find $\lambda^k_3\,3(z_1^k)^2\to 0$ by $z^k_1\to 0$, and due
	to $\eta^k_1\to\eta_1$, this leads to $\eta_1\leq 0$ - a contradiction.
	As a consequence, $\bar z$ is A$\partial$-regular.
	
	Note that using the objective function given by $\varphi_0(z):=-z_1$ for all $z\in\R^3$,
	one can exploit \cref{lem:AC_stationarity_via_M} in order to see that $\bar z$ is an
	A$\cdiff$-stationary point of the associated program \eqref{eq:Lipschitzian_program}
	which is not A$\partial$-stationary.
\end{example}

Exploiting \cref{lem:AC_stationarity_via_M}, we find that each A$\gdiff$-stationary
point of \eqref{eq:Lipschitzian_program}, which is A$\gdiff$-regular, is already 
$\gdiff$-stationary. More precisely, A$\gdiff$-regularity is the weakest condition which implies
that an A$\gdiff$-stationary point is actually $\gdiff$-stationary.
In the light of \cite[Section~1]{AndreaniMartinezRamosSilva2016}, we may thus 
refer to A$\gdiff$-regularity
as a \emph{strict} constraint qualification.
Taking \cref{thm:local_minimizers_are_asymptotically_stationary} and \cref{lem:AC_stationarity_via_M}
together, we find the following result.
\begin{theorem}\label{thm:AC_regular_local_minimizers}
	Let $\bar z\in\mathcal Z$ be a local minimizer of
	\eqref{eq:Lipschitzian_program}.
	Then the following assertions hold.
	\begin{enumerate}
		\item[(a)] If $\bar z$ is wA$\partial$-regular, then $\bar z$ is $\cdiff$-stationary.
		\item[(b)] If $\bar z$ is A$\partial$-regular, then $\bar z$ is $\partial$-stationary.
	\end{enumerate}
\end{theorem}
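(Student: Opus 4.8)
The plan is to combine the two results we have already established, namely \cref{thm:local_minimizers_are_asymptotically_stationary} and \cref{lem:AC_stationarity_via_M}. First I would recall that \cref{thm:local_minimizers_are_asymptotically_stationary} tells us that a local minimizer $\bar z$ of \eqref{eq:Lipschitzian_program} is automatically A$\partial$-stationary (and hence, by the remark following that theorem, also A$\cdiff$-stationary). Thus, in either of the two cases, by \cref{lem:AC_stationarity_via_M}(a) we obtain
\[
	\partial\varphi_0(\bar z)\cap\bigl(-\limsup\nolimits_{z\to\bar z}\MM^{\square}(z)\bigr)\neq\varnothing,
\]
where $\gdiff$ is $\cdiff$ in the first case and $\partial$ in the second.

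For assertion (b), the argument is immediate: A$\partial$-regularity means precisely that $\limsup_{z\to\bar z}\MM(z)\subset\MM(\bar z)$, so the nonempty intersection above with $\gdiff=\partial$ yields $\partial\varphi_0(\bar z)\cap\bigl(-\MM(\bar z)\bigr)\neq\varnothing$, which, as noted just before \cref{lem:AC_stationarity_via_M}, is exactly the statement that $\bar z$ is $\partial$-stationary.

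For assertion (a), I would start from the fact that a local minimizer is A$\partial$-stationary, apply \cref{lem:AC_stationarity_via_M}(a) with $\gdiff=\partial$ to get $\partial\varphi_0(\bar z)\cap\bigl(-\limsup_{z\to\bar z}\MM(z)\bigr)\neq\varnothing$, and then invoke the definition of wA$\partial$-regularity, namely $\limsup_{z\to\bar z}\MM(z)\subset\MM^{\textup{c}}(\bar z)$, to conclude $\partial\varphi_0(\bar z)\cap\bigl(-\MM^{\textup{c}}(\bar z)\bigr)\neq\varnothing$, i.e., $\bar z$ is $\cdiff$-stationary. Alternatively, one could observe directly that a local minimizer is A$\cdiff$-stationary, apply \cref{lem:AC_stationarity_via_M}(a) with $\gdiff=\cdiff$, and use the trivial inclusion $\limsup_{z\to\bar z}\MM^{\textup{c}}(z)\supset\limsup_{z\to\bar z}\MM(z)$ is the wrong direction, so the cleaner route is the first one via wA$\partial$-regularity applied to the limiting map $\MM$.

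Since both parts are essentially one-line deductions from previously established lemmas, there is no real obstacle here; the only mild subtlety is bookkeeping which subdifferential operator $\gdiff$ plays the role of $\partial$ versus $\cdiff$ in each invocation of \cref{lem:AC_stationarity_via_M}, and making sure that in part (a) we feed the limiting map $\MM$ into the wA$\partial$-regularity hypothesis while the objective's subdifferential $\partial\varphi_0(\bar z)$ is left untouched (it is always the limiting subdifferential that appears for $\varphi_0$, by the convention fixed in \cref{def:C_M_stationarity}).
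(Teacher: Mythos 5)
Your proposal is correct and follows exactly the paper's route: the paper derives this theorem by combining \cref{thm:local_minimizers_are_asymptotically_stationary} (local minimizers are A$\partial$-stationary) with \cref{lem:AC_stationarity_via_M}(a) and the respective regularity inclusions, precisely as you do. Your observation that in part (a) one must work with the limiting map $\MM$ (via A$\partial$-stationarity) rather than with $\MM^{\textup{c}}$ is the right bookkeeping point and matches the intent of the definition of wA$\partial$-regularity.
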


Exploiting the sequential stationarity condition based on the regular subdifferential mentioned at
the end of \cref{sec:geometric_constraints}, it is technically possible to introduce asymptotic
regularity conditions based on the regular subdifferential as well.
However, let us note that this approach comes along with two disadvantages.
First, the regular subdifferential does not obey a classical but only a fuzzy sum rule which is why 
the outer semicontinuity properties of much more difficult set-valued mappings would need to be considered.
Second, let us recall that in contrast to the limiting and Clarke subdifferential, the regular subdifferential
is not outer semicontinuous in general, see \cref{sec:variational_analysis}, so that a constraint
qualification for stationarity in terms of the regular subdifferential is likely to fail anyway.

In the remaining part of this section, we are going to embed the constraint qualifications from
\cref{def:asymptotic_regularity} into the landscape of qualification conditions from nonsmooth
optimization.
Let us recall that $\gdiff$-NMFCQ, the 
\emph{nonsmooth Mangasarian--Fromovitz constraint qualification w.r.t.\ $\gdiff$}
is said to hold at $\bar z\in\mathcal Z$
whenever the condition
\[
	\left.
	\begin{aligned}
		&0\in\sum\limits_{i\in I(\bar z)}\lambda_i\,\gdiff\varphi_i(\bar z)
		+\sum\limits_{i\in J}
			\lambda_i\,\bigl(\gdiff\varphi_i(\bar z)\cup\gdiff(-\varphi_i)(\bar z)\bigr),\\
		&\lambda_i\geq 0\,(i\in I(\bar z)\cup J)
	\end{aligned}
	\right\}
	\ \Longrightarrow \
		\lambda_i=0\,(i\in I(\bar z)\cup J)
\]
is valid. Clearly, this reduces to the classical MFCQ when continuously differentiable functions
$\varphi_1,\ldots,\varphi_{p+q}$ are under consideration.
Exploiting local boundedness and outer semicontinuity of
the Clarke and limiting subdifferential, standard arguments show that $\gdiff$-NMFCQ is sufficient for 
A$\gdiff$-regularity. 
However, the study from \cite[Section~4]{AndreaniMartinezRamosSilva2016} for smooth functions
clearly underlines that A$\gdiff$-regularity should be much weaker than $\gdiff$-NMFCQ in general. 
Below, we visualize this with the aid of two examples.

First, we want to review the nonsmooth variant of the \emph{relaxed constant positive linear
dependence constraint qualification} introduced in
\cite[Definition~1.1]{XuYe2020} via limiting normals.
\begin{definition}\label{def:RCPLD}
	Let $\bar z\in\mathcal Z$ be a feasible point of \eqref{eq:Lipschitzian_program} and assume that
	the functions $\varphi_{p+1},\ldots,\varphi_{p+q}$, that correspond to the equality constraints
	in \eqref{eq:Lipschitzian_program}, are continuously differentiable in
	a neighborhood of $\bar z$. 
	We say that $\gdiff$-RCPLD, the \emph{relaxed constant positive linear dependence
	constraint qualification w.r.t.\ $\gdiff$}, holds at $\bar z$ whenever the following conditions
	are valid.
	\begin{enumerate}
		\item[(i)] The family $(\nabla \varphi_i(z))_{i\in J}$ has constant rank in some
			neighborhood of $\bar z$.
		\item[(ii)]  There is some index set $\widetilde J\subset J$ such that
			$(\nabla\varphi_i(\bar z))_{i\in \widetilde J}$ is a basis of the subspace
			$\operatorname{span}\{\nabla\varphi_i(\bar z)\,|\,i\in J\}$.
		\item[(iii)] For each index set $\widetilde I\subset I(\bar z)$ and each family of subgradients
			$(\xi_i)_{i\in \widetilde I}$ satisfying $\xi_i\in\gdiff\varphi_i(\bar z)$
			($i\in \widetilde I$) such that the pair of families 
			$((\xi_i)_{i\in \widetilde I},(\nabla\varphi_i(\bar z))_{i\in\widetilde J})$
			is positive linearly dependent,
			we can ensure that, for large enough $k\in\N$, the vectors from the family
			$(\xi_i^k)_{i\in \widetilde I}\cup(\nabla\varphi_i(z^k))_{i\in\widetilde J}$ are
			linearly dependent where the sequences $\{z^k\}_{k\in\N}\subset\R^n$ and
			$\{\xi_i^k\}_{k\in\N}\subset\R^n$ ($i\in \widetilde I$) with $z^k\to\bar z$,
			$\xi_i^k\to\xi_i$ ($i\in \widetilde I$) and $\xi_i^k\in\gdiff\varphi_i(z^k)$ for all 
			$k\in\N$ and $i\in \widetilde I$ are arbitrarily chosen.
	\end{enumerate}
\end{definition}

By definition, we see that $\gdiff$-RCPLD is milder than $\gdiff$-NMFCQ whenever the
equality constraints under consideration are smooth.
Furthermore, $\cdiff$-RCPLD is sufficient for $\partial$-RCPLD.
The converse is not true which can be exemplary seen when considering the constraint
region modeled by the single inequality constraint $-|z|\leq 0$
at $\bar z:=0$. Due to $\partial(-\abs{\cdot})(0)=\{-1,1\}$ and $\cdiff(-\abs{\cdot})(0)=[-1,1]$,
$\partial$-NMFCQ is valid which guarantees validity of $\partial$-RCPLD.
On the other hand, $\cdiff$-RCPLD obviously fails to hold since $0\in\cdiff(-\abs{\cdot})(0)$ while
$\{1/k\}_{k\in\N}\subset\cdiff(-\abs{\cdot})(0)$ is non-vanishing and satisfies $1/k\to 0$.

Let us mention that RCPLD has been introduced for standard nonlinear programs
in \cite{AndreaniHaeserSchuverdtSilva2012}, and assuming that all the functions
$\varphi_i$ ($i\in I\cup J$) are smooth, \cref{def:RCPLD} recovers this classical notion.
Recently, the definition of RCPLD has been extended to nonsmooth and complementarity-based
systems in \cite{ChieuLee2013,GuoLin2013,XuYe2020}. In \cite{MehlitzMinchenko2021},
a parametric version of this constraint qualification has been discussed.

Below, we adapt the proofs of \cite[Theorem~4.8]{AndreaniHaeserSecchinSilva2019} and
\cite[Theorem~4.2]{Ramos2019} in order to verify that
$\gdiff$-RCPLD is indeed sufficient for A$\gdiff$-regularity.
\begin{lemma}\label{lem:RCPLD}
	Let $\bar z\in\mathcal Z$ be a feasible point of \eqref{eq:Lipschitzian_program} and assume that
	the functions $\varphi_{p+1},\ldots,\varphi_{p+q}$ are continuously differentiable in
	a neighborhood of $\bar z$. 
	Furthermore, let $\gdiff$-RCPLD hold at $\bar z$.
	Then $\bar z$ is A$\gdiff$-regular.
\end{lemma}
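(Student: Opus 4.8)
The plan is to show that $\MM^\square$ is outer semicontinuous at $\bar z$, i.e., that every $\eta\in\limsup_{z\to\bar z}\MM^\square(z)$ already belongs to $\MM^\square(\bar z)$. So fix such an $\eta$; then there are sequences $z^k\to\bar z$, $\eta^k\to\eta$, and multipliers $\lambda^k_i\geq 0$ for $i\in I(\bar z)\cup J$, together with subgradients $\xi^k_i\in\gdiff\varphi_i(z^k)$ for $i\in I(\bar z)$ and $\eta^k_i\in\gdiff\varphi_i(z^k)\cup\gdiff(-\varphi_i)(z^k)$ for $i\in J$, such that
\[
	\eta^k=\sum_{i\in I(\bar z)}\lambda^k_i\,\xi^k_i+\sum_{i\in J}\lambda^k_i\,\eta^k_i.
\]
Since the $\varphi_i$ with $i\in J$ are $C^1$ near $\bar z$, we have $\gdiff\varphi_i(z^k)\cup\gdiff(-\varphi_i)(z^k)=\{\pm\nabla\varphi_i(z^k)\}$, so each term in the second sum is a scalar multiple of $\nabla\varphi_i(z^k)$ with an unrestricted-sign coefficient; this lets me absorb the equality constraints into the smooth part of the argument. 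By local boundedness of $\gdiff$, the sequences $\{\xi^k_i\}$ are bounded, so along a subsequence $\xi^k_i\to\xi_i$, and outer semicontinuity of $\gdiff$ gives $\xi_i\in\gdiff\varphi_i(\bar z)$.

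The first main step is a Carathéodory-type reduction. Using assumption (ii) of $\gdiff$-RCPLD I may, after discarding redundant equality-constraint gradients, assume that at $\bar z$ the family $(\nabla\varphi_i(\bar z))_{i\in\widetilde J}$ is linearly independent, and by the constant-rank assumption (i) the family $(\nabla\varphi_i(z^k))_{i\in\widetilde J}$ stays linearly independent for large $k$. Then, for each fixed $k$, I apply the classical Carathéodory lemma for conical/affine combinations to the representation of $\eta^k$: I can rewrite $\eta^k$ using only a subset $\widetilde I^k\subset I(\bar z)$ of active-inequality subgradients together with $\widetilde J$ such that the vectors $(\xi^k_i)_{i\in\widetilde I^k}\cup(\nabla\varphi_i(z^k))_{i\in\widetilde J}$ are linearly independent, keeping the nonnegativity of the coefficients $\lambda^k_i$, $i\in\widetilde I^k$, and with the equality-constraint coefficients of arbitrary sign. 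Since there are only finitely many subsets of $I(\bar z)$, I pass to a further subsequence along which $\widetilde I^k\equiv\widetilde I$ is constant.

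The second main step — and the one I expect to be the crux — is controlling the multipliers $\lambda^k_i$ via the linear independence just arranged. I claim $\{\lambda^k_i\}_k$ and the (signed) equality-constraint coefficients are bounded. If not, normalize by the largest of their absolute values; the normalized coefficient vector has a nonzero limit, yet $\eta^k/(\text{norm}) \to 0$, so in the limit one gets a nontrivial linear combination of $(\xi_i)_{i\in\widetilde I}\cup(\nabla\varphi_i(\bar z))_{i\in\widetilde J}$ equal to zero. With the nonnegative inequality coefficients not all zero (or some equality coefficient nonzero), this says the pair of families $\bigl((\xi_i)_{i\in\widetilde I},(\nabla\varphi_i(\bar z))_{i\in\widetilde J}\bigr)$ is positive linearly dependent; then assumption (iii) of $\gdiff$-RCPLD forces $(\xi^k_i)_{i\in\widetilde I}\cup(\nabla\varphi_i(z^k))_{i\in\widetilde J}$ to be linearly dependent for large $k$, contradicting the linear independence secured in the Carathéodory step. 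Hence the coefficients are bounded; passing to a convergent subsequence yields limits $\lambda_i\geq 0$ for $i\in\widetilde I$ and signed limits for $i\in\widetilde J$. Taking $k\to\infty$ in the (reduced) representation of $\eta^k$, and recalling $\gamma\,\nabla\varphi_i(\bar z)$ lies in $\lambda_i\bigl(\gdiff\varphi_i(\bar z)\cup\gdiff(-\varphi_i)(\bar z)\bigr)$ for $\gamma$ of either sign with $\lambda_i=|\gamma|$, together with reinstating the discarded redundant equality gradients (each of which is a linear combination of $(\nabla\varphi_i(\bar z))_{i\in\widetilde J}$, hence absorbable), gives $\eta\in\MM^\square(\bar z)$. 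This proves outer semicontinuity, i.e., A$\gdiff$-regularity at $\bar z$.
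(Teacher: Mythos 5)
Your proposal is correct and follows essentially the same route as the paper's proof: reduce the equality-constraint gradients to the basis indexed by $\widetilde J$ using conditions (i)--(ii), apply the Carath\'eodory-type lemma of Andreani, Haeser, Schuverdt, and Silva to obtain a linearly independent representing subfamily with a fixed index set $\widetilde I$ along a subsequence, derive boundedness of the multipliers by contradiction via condition (iii), and pass to the limit using local boundedness and outer semicontinuity of $\gdiff$. The only cosmetic difference is your final remark about ``reinstating'' the discarded equality gradients, which is unnecessary since the reduced representation already certifies $\eta\in\MM^\square(\bar z)$ by setting the remaining multipliers to zero.
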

\begin{proof}
	Let us fix some arbitrary point $\eta\in\limsup_{z\to\bar z}\MM^{\square}(z)$.
	Then we find sequences $\{z^k\}_{k\in\N},\{\eta^k\}_{k\in\N}\subset\R^n$ such that
	$z^k\to\bar z$, $\eta^k\to \eta$, and $\eta^k\in\MM^\square(z^k)$ for all $k\in\N$.
	By construction, there are sequences $\{\lambda_i^k\}_{k\in\N}\subset\R_+$ ($i\in I(\bar z)$), 
	$\{\xi_i^k\}_{k\in\N}\subset\R^n$ ($i\in I(\bar z)$), and $\{\mu^k_i\}_{k\in\N}\subset\R$ ($i\in J$)
	which satisfy $\xi_i^k\in\gdiff\varphi_i(z^k)$ for all $i\in I(\bar z)$ and $k\in\N$ as well as
	\[
		\eta^k
		=
		\sum\nolimits_{i\in I(\bar z)}\lambda_i^k\,\xi_i^k
		+
		\sum\nolimits_{i\in J}\mu_i^k\,\nabla\varphi_i(z^k)
	\]
	for all $k\in\N$. Exploiting validity of $\gdiff$-RCPLD (we use the notation from
	\cref{def:RCPLD}), we find sequences $\{\tilde\mu_i^k\}_{k\in\N}\subset\R$ ($i\in\widetilde J$) such
	that
	\[
		\eta^k
		=
		\sum\nolimits_{i\in I(\bar z)}\lambda_i^k\,\xi_i^k
		+
		\sum\nolimits_{i\in \widetilde J}\tilde\mu_i^k\nabla\varphi_i(z^k)
	\]
	is valid for all $k\in\N$. 
	Next, for each $k\in\N$, we apply \cite[Lemma~1]{AndreaniHaeserSchuverdtSilva2012} in order to find
	an index set $I^k\subset I(\bar z)$ as well as multipliers $\hat\lambda^k_i>0$ ($i\in I^k$)
	and $\hat\mu^k_i\in\R$ ($i\in\widetilde J$) such that
	\begin{equation}\label{eq:suitable_representation_RCPLD_proof}
		\eta^k
		=
		\sum\nolimits_{i\in I^k}\hat\lambda_i^k\,\xi_i^k
		+
		\sum\nolimits_{i\in \widetilde J}\hat\mu_i^k\,\nabla\varphi_i(z^k)
	\end{equation}
	while the vectors from $(\xi_i^k)_{i\in I^k}\cup(\nabla\varphi_i(z^k))_{i\in\widetilde J}$
	are linearly independent.
	Noting that there are only finitely many subsets of $I(\bar z)$, we may assume w.l.o.g.\ that
	$I^k=\widetilde I$ holds for all $k\in\N$ and some $\widetilde I\subset I(\bar z)$ 
	(if necessary, consider a suitable 	subsequence).
	
	Supposing that $\{((\hat\lambda_i^k)_{i\in \widetilde I},(\hat\mu_i^k)_{i\in\widetilde J})\}_{k\in\N}$ 
	is not bounded,
	we can divide \eqref{eq:suitable_representation_RCPLD_proof} by its norm along a
	principally divergent subsequence (without relabeling).
	Afterwards, we take the limit $k\to\infty$. Observing that
	$\{\xi_i^k\}_{k\in\N}$ ($i\in \widetilde I$) 
	is bounded by local Lipschitzness of $\varphi_i$ at $\bar z$,
	while $\varphi_i$ ($i\in\widetilde J$) is continuously differentiable at $\bar z$, we can
	exploit the outer semicontinuity of the subdifferential construction $\gdiff$ in order to find
	$\hat\lambda_i\geq 0$ and $\xi_i\in\gdiff\varphi_i(\bar z)$ such that, along a 
	suitably chosen subsequence, $\xi^k_i\to\xi_i$ ($i\in \widetilde I$) as well as 
	$\hat\mu_i\in\R$ ($i\in\widetilde J$) which satisfy
	\[
		0
		=
		\sum\nolimits_{i\in \widetilde I}\hat\lambda_i\,\xi_i
		+
		\sum\nolimits_{i\in \widetilde J}\hat\mu_i\,\nabla\varphi_i(\bar z)
	\]
	while $\hat\lambda_i$ ($i\in\widetilde{I}$) and $\hat\mu_i$ ($i\in\widetilde J$) are not all
	vanishing.
	Thus, the pair of families 
	$((\xi_i)_{i\in \widetilde I},(\nabla\varphi_i(\bar z))_{i\in\widetilde J})$
	is positive linearly dependent. By definition of $\gdiff$-RCPLD, this is a contradiction to
	the linear independence of 
	$(\xi_i^k)_{i\in \widetilde I}\cup(\nabla\varphi_i(z^k))_{i\in\widetilde J}$
	for each $k\in\N$ which has been shown earlier.
	
	Thus, the sequence 
	$\{((\hat\lambda_i^k)_{i\in \widetilde I},(\hat\mu_i^k)_{i\in\widetilde J})\}_{k\in\N}$
	is bounded. Similar arguments as above now yields the existence of $\lambda_i\geq 0$ and
	$\xi_i\in\gdiff\varphi_i(\bar z)$ ($i\in \widetilde I$) as well as $\mu_i\in\R$ ($i\in\widetilde J$)
	such that
	\[
		\eta
		=
		\sum\nolimits_{i\in \widetilde I}\lambda_i\,\xi_i
		+
		\sum\nolimits_{i\in\widetilde J}\mu_i\,\nabla\varphi_i(\bar z)
	\]
	follows by taking the limit $k\to\infty$ in \eqref{eq:suitable_representation_RCPLD_proof}
	along a suitable subsequence. Thus, we found $\eta\in\MM^\square(\bar z)$, and this shows
	validity of A$\gdiff$-regularity.
\end{proof}

We would like to mention the following result which is related to
\cite[Theorem~3.10]{Mehlitz2020b}. It addresses the situation where the 
functions $\varphi_1,\ldots,\varphi_{p+q}$ are piecewise affine which is
a classical regular situation in nonsmooth analysis,
see \cite{Robinson1981}.
\begin{lemma}\label{lem:affine_constraints_AC_regular}
	Let $\bar z\in\mathcal Z$ be a feasible point of \eqref{eq:Lipschitzian_program} such that
	all the functions $\varphi_i$ ($i\in I(\bar z)\cup J$) are piecewise affine in a
	neighborhood of $\bar z$. 
	Then $\bar z$ is A$\gdiff$-regular.
\end{lemma}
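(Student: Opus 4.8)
The plan is to show that when all active constraint functions are piecewise affine near $\bar z$, the set-valued mapping $\MM^{\square}$ is locally constant on a neighborhood of $\bar z$, which trivially implies outer semicontinuity at $\bar z$ and hence A$\gdiff$-regularity. The key structural fact I would invoke is that a piecewise affine function $\varphi\colon\R^n\to\R$ has a subdifferential (both in Clarke's and Mordukhovich's sense) that is constant on the relative interior of each of the finitely many polyhedral pieces of its domain, and that near any point $z^*$ the subdifferential at nearby points $z$ always satisfies $\gdiff\varphi(z)\subset\gdiff\varphi(z^*)$ once $z$ is close enough to $z^*$; more precisely, the selection of active pieces at $z$ is a sub-collection of those active at $z^*$, so every gradient contributing to $\gdiff\varphi(z)$ already contributes to $\gdiff\varphi(z^*)$. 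The same monotonicity holds for $\gdiff(-\varphi)$. This is precisely the "regular situation" referenced via \cite{Robinson1981}.

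First I would fix $\bar z$ and choose a neighborhood $U$ of $\bar z$ on which each $\varphi_i$ ($i\in I(\bar z)\cup J$) is piecewise affine, and shrink $U$ so that the collection of affine pieces active at any $z\in U$ is contained in the collection of pieces active at $\bar z$. Then for each such $i$ and each $z\in U$ one has $\gdiff\varphi_i(z)\subset\gdiff\varphi_i(\bar z)$ and $\gdiff(-\varphi_i)(z)\subset\gdiff(-\varphi_i)(\bar z)$. Summing the defining cone combinations in $\MM^{\square}$, this gives $\MM^{\square}(z)\subset\MM^{\square}(\bar z)$ for all $z\in U$. Now take any $\eta\in\limsup_{z\to\bar z}\MM^{\square}(z)$: there are $z^k\to\bar z$ and $\eta^k\to\eta$ with $\eta^k\in\MM^{\square}(z^k)$, and since $z^k\in U$ eventually, $\eta^k\in\MM^{\square}(\bar z)$ eventually. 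Because $\MM^{\square}(\bar z)$ is a closed set (it is a finite union of polyhedral cones, being generated by finitely many gradient vectors with nonnegative coefficients), passing to the limit yields $\eta\in\MM^{\square}(\bar z)$. Hence $\limsup_{z\to\bar z}\MM^{\square}(z)\subset\MM^{\square}(\bar z)$, which is A$\gdiff$-regularity by \cref{def:asymptotic_regularity}.

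Alternatively, and perhaps more cleanly, one can invoke \cref{lem:RCPLD}: since piecewise affine functions have, on a neighborhood of $\bar z$, only finitely many distinct gradient values (one per active piece), the constant rank conditions (i) and (ii) of $\gdiff$-RCPLD can be arranged to hold on a small enough neighborhood, and condition (iii) follows because any subgradient $\xi_i^k\in\gdiff\varphi_i(z^k)$ with $\xi_i^k\to\xi_i$ must eventually equal $\xi_i$ (the gradient of a piece active at $\bar z$), so linear dependence of a limiting family forces linear dependence of the approximating family for large $k$. Applying \cref{lem:RCPLD} then immediately delivers A$\gdiff$-regularity. I would likely present the direct argument as primary since it avoids the smoothness-of-equality-constraints hypothesis built into \cref{def:RCPLD}, which is automatic here but would need to be noted.

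The main obstacle is establishing the local containment $\gdiff\varphi_i(z)\subset\gdiff\varphi_i(\bar z)$ for $z$ near $\bar z$ rigorously — i.e., proving that the piecewise-affine subdifferential is outer semicontinuous with the sharp inclusion into the value at $\bar z$, rather than merely upper semicontinuous in some weaker sense. This requires pinning down the polyhedral subdivision: one must argue that for $z$ sufficiently close to $\bar z$, if an affine selection piece $a_i^\top\cdot + c_i$ realizes $\varphi_i$ on a set containing $z$ in its closure, then that same piece realizes $\varphi_i(\bar z)$, which uses continuity of $\varphi_i$ together with the fact that the "pointwise max/min of affine pieces" representation is stable under small perturbations of the evaluation point. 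Once that combinatorial stability is in hand, everything else is routine; the closedness of $\MM^{\square}(\bar z)$ and the passage to the limit are standard finite-dimensional polyhedral facts.
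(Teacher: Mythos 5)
Your primary argument is correct and essentially the same as the paper's: both establish the local inclusion $\gdiff\varphi_i(z)\subset\gdiff\varphi_i(\bar z)$ for $z$ near $\bar z$ (the paper via finiteness of the possible subdifferential values combined with outer semicontinuity of $\gdiff$, you via combinatorial stability of the active affine pieces), deduce $\MM^{\square}(z)\subset\MM^{\square}(\bar z)$ locally, and pass to the outer limit; your explicit remark on closedness of $\MM^{\square}(\bar z)$ is a detail the paper leaves implicit. However, your alternative route via \cref{lem:RCPLD} does not work and your claim that the smoothness hypothesis on the equality constraints is ``automatic here'' is false: the $\varphi_i$ with $i\in J$ are only piecewise affine, and the paper itself points out (immediately after the lemma, via the example $\abs{z}\leq 0$) that $\gdiff$-RCPLD can fail for piecewise affine data even when the equality constraints are affine, so the direct argument is not merely preferable but necessary.
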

\begin{proof}
	Let us first show that $\bar z$ is A$\partial$-regular.
	Fix some $i\in I(\bar z)\cup J$.
	Since $\varphi_i$ is piecewise affine in a neighborhood of $\bar z$, there only
	exist finitely many different regular (and, thus, limiting) 
	normal cones to $\epi\varphi_i$ locally around
	$\bar z$. 
	Thus, we find a neighborhood $U$ of $\bar z$ and finitely many closed sets 
	$C^1,\ldots,C^\ell\subset\R^n$
	such that, for each $z\in U$, there exists $\nu\in\{1,\ldots,\ell\}$ satisfying
	$\partial\varphi_i(z)=C^\nu$. Let us show $\partial\varphi_i(z)\subset\partial\varphi_i(\bar z)$
	for all $z\in V$ where $V\subset U$ is some neighborhood of $\bar z$. 
	Supposing that this is not true, we find a sequence $\{z^k\}_{k\in\N}\subset\R^n$ and
	some $\nu\in\{1,\ldots,\ell\}$ such that $z^k\to\bar z$, $\partial\varphi_i(z^k)=C^\nu$ for all
	$k\in\N$, and $C^\nu\setminus\partial\varphi_i(\bar z)\neq\varnothing$ hold. 
	On the other hand, by outer
	semicontinuity of the limiting subdifferential and $z^k\to\bar z$, 
	we automatically have $C^\nu\subset\partial\varphi_i(\bar z)$, which is a contradiction.
	
	Observing that there are only finitely many indices in $I(\bar z)\cup J$, we, thus, find
	a neighborhood $\mathcal U$ of $\bar z$ such that 
	$\partial\varphi_i(z)\subset\partial\varphi_i(\bar z)$
	holds for all $z\in\mathcal U$ and all $i\in I(\bar z)\cup J$ while
	$\partial(-\varphi_i)(z)\subset\partial(-\varphi_i)(\bar z)$ is valid for all
	$z\in\mathcal U$ and all $i\in J$.
	Thus, we automatically have $\MM(z)\subset\MM(\bar z)$ for all $z\in\mathcal U$,
	and this yields A$\partial$-regularity of $\bar z$.
	
	Recalling that Clarke's subdifferential corresponds to the closed convex hull of the limiting
	subdifferential, we infer $\MM^\textup{c}(z)\subset\MM^{\textup{c}}(\bar z)$ for all $z\in\mathcal U$
	from above, and this yields A$\cdiff$-regularity of $\bar z$.
\end{proof}

Observe that in contrast to $\gdiff$-RCPLD, we do not need to assume any smoothness of
the functions $\varphi_{p+1},\ldots,\varphi_{p+q}$, which model the equality constraints in 
\eqref{eq:Lipschitzian_program}, in \cref{lem:affine_constraints_AC_regular}.
Furthermore, we would like to note that even in the situation where $\varphi_1,\ldots,\varphi_p$
are piecewise affine while $\varphi_{p+1},\ldots,\varphi_{p+q}$ are affine, $\gdiff$-RCPLD
does not necessarily hold. 
In order to see this, one could simply consider the feasible set $\mathcal Z$ modeled by the single
inequality constraint $\abs{z}\leq 0$ at $\bar z:=0$. 
This is essentially different from the observations in standard nonlinear optimization
where fully affine systems satisfy the \emph{constant rank constraint qualification} and, thus,
RCPLD, see \cite[Section~3]{AndreaniHaeserSchuverdtSilva2012}.

The above observations underline that A$\gdiff$-regularity is a very weak 
constraint qualification
which may hold even for highly degenerated programs where standard constraint qualifications
like $\gdiff$-NMFCQ are always violated.
In \cref{fig:CQs}, we summarize the relations between the discussed constraint qualifications. 

\begin{figure}[h]
\centering
\begin{tikzpicture}[->]

  \node[punkt] at (1,1) 	(A){piecewise\\ affine data};
  \node[punkt] at (4,1) 	(B){$\cdiff$-NMFCQ};
  \node[punkt] at (8,0) 	(C){$\partial$-NMFCQ};
  \node[punkt] at (11,1)	(D){piecewise\\ affine data};
  \node[punkt] at (4.5,-1)	(E){$\cdiff$-RCPLD};
  \node[punkt] at (7.5,-2)	(F){$\partial$-RCPLD};
  \node[punkt] at (4,-4)	(G){A$\cdiff$-regularity};
  \node[punkt] at (8,-4)    (H){A$\partial$-regularity};
  \node[punkt] at (6,-6) 	(I){wA$\partial$-regularity};

  \path     (A) edge[-implies,thick,double] node {}(G)
            (B) edge[-implies,thick,double] node {}(C)
            (B) edge[-implies,thick,double] node[right] {$(*)$}(E)
            (B) edge[-implies,thick,double,bend right] node {}(G)
            (C) edge[-implies,thick,double] node[left] {$(*)$}(F)
            (C) edge[-implies,thick,double,bend left] node {}(H)
            (D) edge[-implies,thick,double] node {}(H)
            (E) edge[-implies,thick,double] node [above] {$(*)$}(F)
            (E) edge[-implies,thick,double] node [right] {$(*)$}(G)
            (F) edge[-implies,thick,double] node [left] {$(*)$}(H)
            (G) edge[-implies,thick,double] node {}(I)
            (H) edge[-implies,thick,double] node {}(I)
            ;
\end{tikzpicture}
\caption{
	Relations between constraint qualifications addressing \eqref{eq:Lipschitzian_program}.
	The label $(*)$ indicates that the underlying relation is only reasonable whenever
	the functions $\varphi_{p+1},\ldots,\varphi_{p+q} $ are continuously differentiable at
	the point of interest.
}
\label{fig:CQs}
\end{figure}
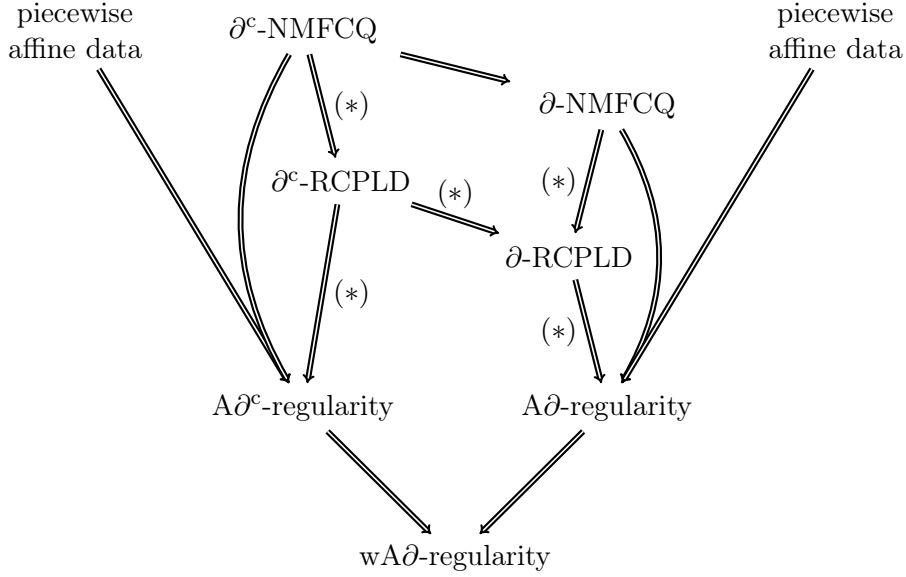

\section{Application to mathematical programs with complementarity constraints}\label{sec:MPCCs}

Here, we apply our results from \cref{sec:asymptotic_concepts} to complementarity-constrained
optimization problems and compare our 
findings with the ones from
\cite{AndreaniHaeserSecchinSilva2019,Ramos2019}. We focus on stationarity conditions
and constraint qualifications. 

For continuously differentiable data functions $f\colon\R^n\to\R$,
$g_i\colon\R^n\to\R$ ($i\in I^g:=\{1,\ldots,m^g\}$), $h_i\colon\R^n\to\R$ ($i\in I^h:=\{1,\ldots,m^h\}$),
and $G_i,H_i\colon\R^n\to\R$ ($i\in I^\textup{cc}:=\{1,\ldots,m^\textup{cc}\}$),
where  $m^g,m^h\in\N\cup\{0\}$ 
and $m^\textup{cc}\in\N$ are arbitrary natural numbers, we consider the so-called
\emph{mathematical program with complementarity constraints} given by
\begin{equation}\label{eq:MPCC}\tag{MPCC}
	\min\left\{
		f(z)
		\,\middle|\,
		\begin{aligned}
		&g_i(z) \leq 0\,(i\in I^g),\,
		h_i(z) = 0\,(i\in I^h),\\
		&0 \leq G_i(z) \perp H_i(z) \geq 0\,(i\in I^\textup{cc})
		\end{aligned}
	\right\}.
\end{equation}
The latter has been studied quite intensively from the viewpoint of theory and numerical
practice since this model covers numerous interesting real-world applications while being
inherently irregular due to the challenging combinatorial structure of the feasible set,
see e.g.\ \cite{LuoPangRalph1996,OutrataKocvaraZowe1998,ScheelScholtes2000,Ye2005} 
and the references therein.
Throughout the section, the feasible set of \eqref{eq:MPCC} will be denoted by $\mathcal Z^\textup{cc}$.
For brevity of notation, let $g\colon\R^n\to\R^{m^g}$, $h\colon\R^n\to\R^{m^h}$, and
$G,H\colon\R^n\to\R^{m^{\textup{cc}}}$ be the mappings which possess the component functions
$g_i$ ($i\in I^g$), $h_i$ ($i\in I^h$), $G_i$ ($i\in I^{\textup{cc}}$), and
$H_i$ ($i\in I^{{\textup{cc}}}$), respectively. 
Furthermore, for multipliers $\lambda^{g}\in\R^{m^g}$, $\lambda^h\in\R^{m^h}$, 
and $\lambda^{G},\lambda^{H}\in\R^{m^\textup{cc}}$, we exploit the MPCC-Lagrangians
\begin{align*}
	\forall z\in\R^n\colon\quad
	\mathcal L^\textup{cc}_0(z,\lambda^g,\lambda^h,\lambda^G,\lambda^H)
	&:=
	(\lambda^g)^\top g(z)+(\lambda^h)^\top h(z)+(\lambda^G)^\top G(z)+(\lambda^H)^\top H(z),\\
	\mathcal L^\textup{cc}(z,\lambda^g,\lambda^h,\lambda^G,\lambda^H)
	&:=
	f(z)+\mathcal L^\textup{cc}_0(z,\lambda^g,\lambda^h,\lambda^G,\lambda^H).
\end{align*}
For an arbitrary feasible point $\bar z\in\mathcal Z^\textup{cc}$, we define the following well-known
index sets:
\begin{align*}
	I^{+0}(\bar z)&:=\{i\in I^\textup{cc}\,|\,G_i(\bar z)>0\,\land\,H_i(\bar z)=0\},\\
	I^{0+}(\bar z)&:=\{i\in I^\textup{cc}\,|\,G_i(\bar z)=0\,\land\,H_i(\bar z)>0\},\\
	I^{00}(\bar z)&:=\{i\in I^\textup{cc}\,|\,G_i(\bar z)=H_i(\bar z)=0\}.
\end{align*}

In order to apply the theory from \cref{sec:asymptotic_concepts} to the model \eqref{eq:MPCC},
we reformulate the final $m^\textup{cc}$ so-called complementarity constraints with the aid
of the minimum function $\varphi_\textup{min}\colon\R^2\to\R$ given by
$\varphi_\textup{min}(a,b):=\min(a,b)$ for all $(a,b)\in\R^2$ in the following way:
\begin{equation}\label{eq:min_ref_compl}
	\varphi_\textup{min}(G_i(z),H_i(z))\,=\,0\quad(i\in I^\textup{cc}).
\end{equation}
Indeed, the resulting problem is equivalent to \eqref{eq:MPCC} since $\varphi_\textup{min}$
is a so-called \emph{nonlinear complementarity problem function} (NCP-function for short), 
see \cite{Galantai2012,KanzowYamashitaFukushima1997} for an overview. 
In order to evaluate the asymptotic stationarity and regularity conditions from
\cref{sec:asymptotic_concepts} for \eqref{eq:MPCC}, 
based on the reformulation \eqref{eq:min_ref_compl} of the complementarity
constraints, we exploit \cref{lem:composed_subdifferential_of_minimum_function}.

\begin{proposition}\label{prop:asymptotic_stationarity_MPCC}
	Let $\bar z\in\mathcal Z^\textup{cc}$ be a feasible point of \eqref{eq:MPCC}.
	Then the following assertions hold.
	\begin{enumerate}
		\item \label{item:asym_CStat_MPCC}
			The point $\bar z$ is A$\cdiff$-stationary for \eqref{eq:MPCC}
			using the reformulation \eqref{eq:min_ref_compl} if and only if there are
			sequences $\{z^k\}_{k\in\N},\{\varepsilon^k\}_{k\in\N}\subset\R^n$, 
			$\{\lambda^{g,k}\}_{k\in\N}\subset\R^{m^g}$, $\{\lambda^{h,k}\}_{k\in\N}\subset\R^{m^h}$
			and $\{\lambda^{G,k}\}_{k\in\N},\{\lambda^{H,k}\}_{k\in\N}\subset\R^{m^{\textup{cc}}}$
			such that $z^k\to\bar z$, $\varepsilon^k\to 0$, and
			\begin{subequations}\label{eq:asym_CStat_MPCC}
				\begin{align}
				\label{eq:asym_CStat_MPCC_der}
					&\forall k\in\N\colon\quad
					\varepsilon^k
					=
					\nabla_z\mathcal L^\textup{cc}
						(z^k,\lambda^{g,k},\lambda^{h,k},\lambda^{G,k},\lambda^{H,k}),\\
				\label{eq:asym_CStat_MPCC_compl_slack}
					&\forall k\in\N,\,\forall i\in I^g\colon\quad
						\min(\lambda_i^{g,k},-g_i(\bar z))=0,\\
				\label{eq:asym_CStat_MPCC_I+0}
					&\forall k\in\N,\,\forall i\in I^{+0}(\bar z)\colon\quad 
						\lambda_i^{G,k}=0,\\
				\label{eq:asym_CStat_MPCC_I0+}
					&\forall k\in\N,\,\forall i\in I^{0+}(\bar z)\colon\quad 
						\lambda_i^{H,k}=0,\\
				\label{eq:asym_CStat_MPCC_I00}
					&\forall k\in\N,\,\forall i\in I^{00}(\bar z)\colon\quad
						\begin{aligned}
							&G_i(z^k)<H_i(z^k)&&\Longrightarrow&&\lambda^{H,k}_i=0,&\\
							&G_i(z^k)>H_i(z^k)&&\Longrightarrow&&\lambda^{G,k}_i=0,&\\
							&G_i(z^k)=H_i(z^k)&&\Longrightarrow&
								&(\lambda^{G,k}_i)(\lambda^{H,k}_i)\geq 0.&
						\end{aligned}
				\end{align}
			\end{subequations}
			Particularly, we find
			\begin{equation}
				\label{eq:asym_CStat_MPCC_I00_weak}
					\forall k\in\N,\,\forall i\in I^{00}(\bar z)\colon\quad
						(\lambda^{G,k}_i)(\lambda^{H,k}_i)\geq 0
			\end{equation}
			in this situation.
		\item \label{item:asym_MStat_MPCC}
			The point $\bar z$ is A$\partial$-stationary for \eqref{eq:MPCC}
			using the reformulation \eqref{eq:min_ref_compl} if and only if there are
			sequences $\{z^k\}_{k\in\N},\{\varepsilon^k\}_{k\in\N}\subset\R^n$, 
			$\{\lambda^{g,k}\}_{k\in\N}\subset\R^{m^g}$, $\{\lambda^{h,k}\}_{k\in\N}\subset\R^{m^h}$
			and $\{\lambda^{G,k}\}_{k\in\N},\{\lambda^{H,k}\}_{k\in\N}\subset\R^{m^{\textup{cc}}}$
			such that $z^k\to\bar z$, $\varepsilon^k\to 0$, 
			\eqref{eq:asym_CStat_MPCC_der}-\eqref{eq:asym_CStat_MPCC_I0+}, and
			\[
				\forall k\in\N,\,\forall i\in I^{00}(\bar z)\colon\quad
						\begin{aligned}
							&G_i(z^k)<H_i(z^k)&&\Longrightarrow&&\lambda^{H,k}_i=0,&\\
							&G_i(z^k)>H_i(z^k)&&\Longrightarrow&&\lambda^{G,k}_i=0,&\\
							&G_i(z^k)=H_i(z^k)&&\Longrightarrow&
								&\lambda^{G,k}_i,\,\lambda^{H,k}_i<0\\
							& && &&\qquad
								\lor\,(\lambda^{G,k}_i)(\lambda^{H,k}_i)=0.&
						\end{aligned}
			\]
			Particularly, we find
			\begin{equation}
				\label{eq:asym_MStat_MPCC_I00_weak}
					\forall k\in\N,\,\forall i\in I^{00}(\bar z)\colon\quad
						\lambda^{G,k}_i,\,\lambda^{H,k}_i<0\,\lor\,
								(\lambda^{G,k}_i)(\lambda^{H,k}_i)=0
			\end{equation}
			in this situation. Above, $\lor$ denotes the logical `or'.
	\end{enumerate}
\end{proposition}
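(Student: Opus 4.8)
The plan is to treat the optimization problem obtained from \eqref{eq:MPCC} via the reformulation \eqref{eq:min_ref_compl} as a concrete instance of \eqref{eq:Lipschitzian_program}, with objective $\varphi_0:=f$, inequality data $g_i$ ($i\in I^g$), and equality data consisting of the $h_i$ ($i\in I^h$) together with $\varphi_i:=\varphi_\textup{min}(G_i(\cdot),H_i(\cdot))$ ($i\in I^\textup{cc}$), and then to simply unwind \cref{def:asymptotic_stationarity} for exactly this choice of data. Since $f$, $g_i$, and $h_i$ are continuously differentiable, all of their Clarke and limiting subdifferentials collapse to the singletons containing the respective gradients, so the entire nonsmooth content is carried by the $\varphi_i$ ($i\in I^\textup{cc}$); for these, \cref{lem:composed_subdifferential_of_minimum_function} applied with $f_1:=G_i$ and $f_2:=H_i$ supplies the exact formulas for $\partial\varphi_i$, $\cdiff\varphi_i$, and $\partial(-\varphi_i)=\cdiff(-\varphi_i)$ at every point near $\bar z$. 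A preliminary routine step: along any sequence $z^k\to\bar z$ one has $G_i(z^k)>H_i(z^k)$ for $i\in I^{+0}(\bar z)$ and $G_i(z^k)<H_i(z^k)$ for $i\in I^{0+}(\bar z)$ once $k$ is large, and since A$\gdiff$-stationarity only asks for the existence of suitable sequences, I would pass to such a tail and reindex, so that for these indices only $\nabla H_i(z^k)$, respectively $\nabla G_i(z^k)$, can appear in the stationarity inclusion. For $i\in I^{00}(\bar z)$, all three sign patterns of $G_i(z^k)-H_i(z^k)$ survive, which is precisely what produces the three-way case distinction in \eqref{eq:asym_CStat_MPCC_I00}.

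For the characterization of A$\cdiff$-stationarity, I would start from the equivalent form recorded right after \cref{def:asymptotic_stationarity}, in which Clarke's subdifferential is used throughout and all equality-constraint multipliers are sign-indefinite. The $g_i$-constraints reproduce \eqref{eq:asym_CStat_MPCC_compl_slack} verbatim, the $h_i$-contribution reads $\nu_i^k\nabla h_i(z^k)$ with $\nu_i^k\in\R$, and, substituting the formula for $\cdiff\varphi_i(z^k)$ for $i\in I^\textup{cc}$ --- a singleton off the diagonal $G_i(z^k)=H_i(z^k)$, the segment $\conv\{\nabla G_i(z^k),\nabla H_i(z^k)\}$ on it --- the inclusion \eqref{eq:asymptotic_stationarity_Lagrangian} becomes \eqref{eq:asym_CStat_MPCC_der} as soon as I set $(\lambda_i^{G,k},\lambda_i^{H,k})$ equal to $(\rho_i^k,0)$, $(0,\rho_i^k)$, or $(\rho_i^k t_i^k,\rho_i^k(1-t_i^k))$, where $\rho_i^k\in\R$ is the $i$-th equality-constraint multiplier and $t_i^k\in[0,1]$ the convex-combination parameter of the chosen subgradient. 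This yields \eqref{eq:asym_CStat_MPCC_I+0}, \eqref{eq:asym_CStat_MPCC_I0+}, \eqref{eq:asym_CStat_MPCC_I00}, and the product sign condition \eqref{eq:asym_CStat_MPCC_I00_weak}, since $(\lambda_i^{G,k})(\lambda_i^{H,k})=(\rho_i^k)^2 t_i^k(1-t_i^k)\geq 0$. For the converse direction, given multipliers satisfying \eqref{eq:asym_CStat_MPCC}, I would reconstruct $\rho_i^k:=\lambda_i^{G,k}+\lambda_i^{H,k}$ and $t_i^k:=\lambda_i^{G,k}/\rho_i^k$ on the diagonal --- well defined and lying in $[0,1]$ precisely because $(\lambda_i^{G,k})(\lambda_i^{H,k})\geq 0$, with the case $\rho_i^k=0$ being trivial --- which places the corresponding point back into $\rho_i^k\cdiff\varphi_i(z^k)$.

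The characterization of A$\partial$-stationarity would be obtained in the same way, but now directly from \cref{def:asymptotic_stationarity}, i.e., with nonnegative multipliers $\lambda_i^k\geq 0$ on each equality constraint and the constraint term $\lambda_i^k\bigl(\partial\varphi_i(z^k)\cup\partial(-\varphi_i)(z^k)\bigr)$. The decisive feature is the asymmetric shape of this set on the diagonal: by \cref{lem:composed_subdifferential_of_minimum_function}, for $G_i(z^k)=H_i(z^k)$ it equals $\{\nabla G_i(z^k),\nabla H_i(z^k)\}\cup\conv\{-\nabla G_i(z^k),-\nabla H_i(z^k)\}$. Picking one of the two isolated points gives a contribution $\lambda_i^k\nabla G_i(z^k)$ (hence $\lambda_i^{G,k}\geq 0$, $\lambda_i^{H,k}=0$) or $\lambda_i^k\nabla H_i(z^k)$, whereas picking a point of the segment gives $-\lambda_i^k\bigl(t_i^k\nabla G_i(z^k)+(1-t_i^k)\nabla H_i(z^k)\bigr)$ (hence $\lambda_i^{G,k},\lambda_i^{H,k}\leq 0$); together these possibilities amount exactly to the disjunction ``$\lambda_i^{G,k},\lambda_i^{H,k}<0$ or $(\lambda_i^{G,k})(\lambda_i^{H,k})=0$'' appearing in the statement, and the reverse direction again relies on $\lambda_i^k:=-\lambda_i^{G,k}-\lambda_i^{H,k}$ in the regime of two negative multipliers. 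Off the diagonal, $\partial\varphi_i(z^k)\cup\partial(-\varphi_i)(z^k)$ is a two-point set $\{\pm\nabla G_i(z^k)\}$ or $\{\pm\nabla H_i(z^k)\}$, which makes the surviving multiplier sign-indefinite while forcing the other to vanish, matching \eqref{eq:asym_CStat_MPCC_I+0}--\eqref{eq:asym_CStat_MPCC_I0+}; and \eqref{eq:asym_MStat_MPCC_I00_weak} falls out of the same case distinction.

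I expect the only genuinely delicate bookkeeping to be the diagonal case $G_i(z^k)=H_i(z^k)$ for $i\in I^{00}(\bar z)$: there the limiting subdifferential $\partial\varphi_i(z^k)$ is a two-point set while $\partial(-\varphi_i)(z^k)$ is the full segment $\conv\{-\nabla G_i(z^k),-\nabla H_i(z^k)\}$, and it is precisely this asymmetry --- absent for Clarke's subdifferential, which is symmetric under sign reversal --- that generates the additional ``both multipliers negative'' alternative admissible for A$\partial$-stationarity as opposed to the stronger product sign alternative for A$\cdiff$-stationarity; both the forward translation and the reconstruction then have to be checked in each of the three sign regimes. A minor point worth making explicit in the write-up is that, as A$\gdiff$-stationarity involves only \emph{some} approximating sequence, truncating it in order to freeze the signs of $G_i(z^k)-H_i(z^k)$ for $i\in I^{+0}(\bar z)\cup I^{0+}(\bar z)$ is harmless.
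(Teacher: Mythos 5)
Your proposal is correct and follows essentially the same route as the paper: the paper's own proof consists precisely of applying \cref{def:asymptotic_stationarity} to the reformulated problem, invoking \cref{lem:composed_subdifferential_of_minimum_function} for the subdifferentials of $z\mapsto\min(G_i(z),H_i(z))$, and observing that continuity of $G$ and $H$ forces $G_i(z^k)>H_i(z^k)$ for $i\in I^{+0}(\bar z)$ and $G_i(z^k)<H_i(z^k)$ for $i\in I^{0+}(\bar z)$ once $k$ is large. Your detailed bookkeeping of the multiplier reconstruction in both directions, and your identification of the asymmetry between $\partial\varphi_i\cup\partial(-\varphi_i)$ and $\cdiff\varphi_i$ on the diagonal as the source of the differing $I^{00}$-conditions, is exactly the content the paper leaves implicit.
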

\begin{proof}
	This follows by applying \cref{def:asymptotic_stationarity} to the reformulated
	problem \eqref{eq:MPCC} while respecting \cref{lem:composed_subdifferential_of_minimum_function}.
	On the route, one has to observe that for each sequence $\{z^k\}_{k\in\N}\subset\R^n$ which
	satisfies $z^k\to\bar z$, continuity of $G$ and $H$ yields the inclusions
	$I^{0+}(\bar z)\subset\{i\in I^{\textup{cc}}\,|\,G_i(z^k)<H_i(z^k)\}$ and
	$I^{+0}(\bar z)\subset\{i\in I^\textup{cc}\,|\,G_i(z^k)>H_i(z^k)\}$ for large enough $k\in\N$.
\end{proof}

Let us point out that our choice to use the NCP-function $\varphi_\textup{min}$ for the reformulation
of the complementarity constraints is \emph{essential} in order to guarantee validity of
\cref{prop:asymptotic_stationarity_MPCC}. Exemplary, let us mention that strictly weaker asymptotic
stationarity conditions would be obtained if one exploits the famous Fischer--Burmeister function
or the Kanzow--Schwartz function for that purpose. One nearby reason behind this fact
is that these NCP-functions are actually \emph{too} smooth.
A related observation has been made in \cite[Section~3]{Mehlitz2020} where or-constraints have been
reformulated with the aid of so-called \emph{or-compatible} NCP-functions.

In the light of \cref{prop:asymptotic_stationarity_MPCC}, the following definition is reasonable.
\begin{definition}\label{def:asym_stat_MPCC}
	Let $\bar z\in\mathcal Z^\textup{cc}$ be a feasible point of \eqref{eq:MPCC}.
	\begin{enumerate}
		\item We call $\bar z$ asymptotically MPCC-$\cdiff$-stationary 
			(MPCC-A$\cdiff$-stationary) if there are sequences
			$\{z^k\}_{k\in\N},\{\varepsilon^k\}_{k\in\N}\subset\R^n$, 
			$\{\lambda^{g,k}\}_{k\in\N}\subset\R^{m^g}$, $\{\lambda^{h,k}\}_{k\in\N}\subset\R^{m^h}$
			as well as $\{\lambda^{G,k}\}_{k\in\N},\{\lambda^{H,k}\}_{k\in\N}\subset\R^{m^{\textup{cc}}}$
			such that $z^k\to\bar z$, $\varepsilon^k\to 0$, 
			\eqref{eq:asym_CStat_MPCC_der}-\eqref{eq:asym_CStat_MPCC_I0+}, and
			\eqref{eq:asym_CStat_MPCC_I00_weak} hold.
		\item We call $\bar z$ asymptotically MPCC-$\partial$-stationary
			(MPCC-A$\partial$-stationary) if there are sequences
			$\{z^k\}_{k\in\N},\{\varepsilon^k\}_{k\in\N}\subset\R^n$, 
			$\{\lambda^{g,k}\}_{k\in\N}\subset\R^{m^g}$, $\{\lambda^{h,k}\}_{k\in\N}\subset\R^{m^h}$
			as well as $\{\lambda^{G,k}\}_{k\in\N},\{\lambda^{H,k}\}_{k\in\N}\subset\R^{m^{\textup{cc}}}$
			such that $z^k\to\bar z$, $\varepsilon^k\to 0$, 
			\eqref{eq:asym_CStat_MPCC_der}-\eqref{eq:asym_CStat_MPCC_I0+}, and
			\eqref{eq:asym_MStat_MPCC_I00_weak} hold.
	\end{enumerate}
\end{definition}

Observe that these stationarity conditions generalize the classical notions of C- and
M-stationarity of \eqref{eq:MPCC} which are used throughout the literature, 
see e.g.\ \cite[Section~2.2]{Ye2005} for the precise definitions.

Let us compare the conditions from \cref{def:asym_stat_MPCC} 
with the sequential stationarity notions for \eqref{eq:MPCC}
which were studied in 
\cite{AndreaniHaeserSecchinSilva2019} and \cite{Ramos2019}, respectively.
First, we focus on the stationarity conditions w.r.t.\ the limiting subdifferential.
Therefore, we introduce a set 
$\Lambda^\textup{cc}:=\{(a,b)\in\R^2\,|\,0\leq a\perp b\geq 0\}$ and observe
that the conditions \eqref{eq:asym_CStat_MPCC_I+0}, \eqref{eq:asym_CStat_MPCC_I0+}, and
\eqref{eq:asym_MStat_MPCC_I00_weak} are equivalent to
\[
	\forall k\in\N\,\forall i\in I^\textup{cc}\colon\quad
	(\lambda^{G,k}_i,\lambda^{H,k}_i)\in\mathcal N_{\Lambda^{\textup{cc}}}(G_i(\bar z),H_i(\bar z)).
\]
Thus, MPCC-A$\partial$-stationary points are MPCC-AKKT points in the sense of
\cite[Definition~3.2]{Ramos2019}. Exploiting the inclusion 
$\mathcal N_{\Lambda^{\textup{cc}}}(a,b)\subset\mathcal N_{\Lambda^\textup{cc}}(\bar a,\bar b)$, which,
due to the disjunctive structure of $\Lambda^\textup{cc}$,
holds for all $(\bar a,\bar b)\in\Lambda^\textup{cc}$ and all $(a,b)\in\Lambda^\textup{cc}\cap U$
where $U\subset\R^2$ is a sufficiently small neighborhood of $(\bar a,\bar b)$, the converse relation
holds true as well. Thus, the comments at the end of 
\cite[Section~3]{AndreaniHaeserSecchinSilva2019} justify that  
MPCC-A$\partial$-stationarity corresponds to AM-stationarity in the sense of 
\cite[Definition~3.3]{AndreaniHaeserSecchinSilva2019}.
In similar way, we see that MPCC-A$\cdiff$-stationarity corresponds to
AC-stationarity from \cite[Definition~3.3]{AndreaniHaeserSecchinSilva2019}.
Our approach, thus, recovers the sequential stationarity concepts from
\cite{AndreaniHaeserSecchinSilva2019,Ramos2019}.

Clearly, due to \cref{thm:local_minimizers_are_asymptotically_stationary}, each local minimizer
of \eqref{eq:MPCC} is MPCC-A$\partial$-stationary. 
Observing that MPCC-A$\gdiff$-stationarity is slightly weaker than 
A$\gdiff$-stationarity for \eqref{eq:MPCC} based on the reformulation
\eqref{eq:min_ref_compl} of the complementarity constraints, 
we need a slightly stronger constraint qualification than
A$\gdiff$-regularity in order to infer validity of the classical C- or M-stationarity conditions.
More precisely, for some fixed feasible point $\bar z\in\mathcal Z^\textup{cc}$, 
one would be tempted to postulate outer semicontinuity of the mappings
$\MM^{\textup{c}}_\textup{cc},\MM_{\textup{cc}}\colon\R^n\tto\R^n$ given below for each
$z\in\R^n$:
\begin{align*}
	\MM^{\textup{c}}_\textup{cc}(z)
	&:=
	\left\{
		\nabla_z\mathcal L^\textup{cc}_0(z,\lambda^g,\lambda^h,\lambda^G,\lambda^H)
		\,\middle|\,
		\begin{aligned}
			&\forall i\in I^g\colon\,\min(\lambda^g_i,-g_i(\bar z))=0,\\
			&\forall i\in I^{+0}(\bar z)\colon\,\lambda^G_i=0,\\
			&\forall i\in I^{0+}(\bar z)\colon\,\lambda^H_i=0,\\
			&\forall i\in I^{00}(\bar z)\colon\,(\lambda^G_i)(\lambda^H_i)\geq 0
		\end{aligned}
	\right\},\\
	\MM_{\textup{cc}}(z)
	&:=
	\left\{
		\nabla_z\mathcal L^\textup{cc}_0(z,\lambda^g,\lambda^h,\lambda^G,\lambda^H)
		\,\middle|\,
		\begin{aligned}
			&\forall i\in I^g\colon\,\min(\lambda^g_i,-g_i(\bar z))=0,\\
			&\forall i\in I^{+0}(\bar z)\colon\,\lambda^G_i=0,\\
			&\forall i\in I^{0+}(\bar z)\colon\,\lambda^H_i=0,\\
			&\forall i\in I^{00}(\bar z)\colon\,\lambda^G_i,\lambda^H_i<0\,\lor\,(\lambda^G_i)(\lambda^H_i)=0
		\end{aligned}
	\right\}.
\end{align*}
Observe that this precisely recovers the notions of AC- and AM-regularity from
\cite[Definition~4.1]{AndreaniHaeserSecchinSilva2019}. 
Employing \cite[Theorem~5.3]{Mehlitz2020b}, outer semicontinuity of $\MM_{\textup{cc}}$
equals the concept of AM-regularity from \cite[Definition~3.8]{Mehlitz2020b} applied
to \eqref{eq:MPCC} where the complementarity constraints are reformulated as
\[
	(G_i(z),H_i(z))\in\Lambda_\textup{cc}\;(i\in I^{\textup{cc}}),
\]
and this, finally, is the same as the condition MPCC-CCP introduced in
\cite[Definition~3.9]{Ramos2019}. Note that the condition
\[
	\limsup\limits_{z\to\bar z}\MM_{\textup{cc}}(z)
	\subset
	\MM_{\textup{cc}}^\textup{c}(\bar z),
\]
which, for example, holds in \cref{ex:AC_but_not_AM_regular}, amounts to a new
constraint qualification for \eqref{eq:MPCC} which guarantees C-stationarity of
local minimizers. It is motivated by the more general concept of wA$\partial$-regularity
from \cref{def:asymptotic_regularity}.

Some algorithmic benefits of the above MPCC-tailored notions of asymptotic stationarity
and regularity have been envisioned in \cite{AndreaniHaeserSecchinSilva2019,Ramos2019}.
More precisely, let us mention that the convergence analysis associated with some
MPCC-tailored penalty, multiplier-penalty, and relaxation methods can be carried out 
in the presence of sequential regularity.

The subsequent remark, which closes this section, underlines that the above ideas can
be adapted in order to tackle other problem classes from disjunctive optimization. 

\begin{remark}\label{rem:or_constrained_programming}
For continuously differentiable functions $\tilde G_i,\tilde H_i\colon\R^n\to\R$
($i\in I^\textup{dc}:=\{1,\ldots,m^{\textup{dc}}\}$) where $m^\textup{dc}\in\N$ is an
arbitrary natural number, the model
\begin{equation}\label{eq:MPOC}\tag{MPOC}
	\min\left\{
		f(z)
		\,\middle|\,
		\begin{aligned}
		&g_i(z) \leq 0\,(i\in I^g),\,
		h_i(z) = 0\,(i\in I^h),\\
		&\tilde G_i(z)\leq 0\,\lor\,\tilde H_i(z) \leq 0\,(i\in I^\textup{dc})
		\end{aligned}
	\right\}
\end{equation}
is referred to as a \emph{mathematical program with or-constraints} in the literature,
see \cite{Mehlitz2019,Mehlitz2020}.
It basically suffers from the same problems as \eqref{eq:MPCC} which is why weak 
stationarity notions and constraint qualifications as well as problem-tailored solution
methods have been studied for \eqref{eq:MPOC}. Reformulating the final $m^\textup{dc}$
so-called or-constraints with the aid of 
\[
	\varphi_\textup{min}(\tilde G_i(z),\tilde H_i(z))\leq 0\quad (i\in I^\textup{dc}),
\]
which also has been suggested in \cite[Section~3]{Mehlitz2020}, we can apply the
theory of \cref{sec:asymptotic_concepts} similarly as above in order to derive
asymptotic stationarity and regularity conditions for \eqref{eq:MPOC}.
More precisely, the resulting concepts of  A$\cdiff$- and A$\partial$-stationarity
generalize the notions of M- and weak stationarity for \eqref{eq:MPOC} which have been
introduced in \cite[Definition~7.1]{Mehlitz2019}.

Replacing the final $m^\textup{dc}$ constraints of \eqref{eq:MPOC} by
\begin{equation}\label{eq:VC}\tag{VC}
	\tilde H_i(z)\geq 0,\quad\tilde G_i(z)\,\tilde H_i(z)\leq 0\quad (i\in I^\textup{dc}),
\end{equation}
a so-called \emph{vanishing-constrained} optimization problem is obtained,
see e.g.\ \cite{AchtzigerKanzow2008,HoheiselKanzow2007,HoheiselPablosPooladianSchwartzSteverango2020}
and the references therein for an overview. 
Introducing the Lipschitz continuous function $\varphi_\textup{vc}\colon\R^2\to\R$ by means of
\[
	\forall (a,b)\in\R^2\colon\quad
	\varphi_\textup{vc}(a,b):=
	\begin{cases}
		a	& 	a\geq 0,\,b\geq a,\\
		0	&	a<0,\,b\geq 0,\\
		|b|	&	\text{otherwise,}
	\end{cases}
\]
one can easily check that the constraint system \eqref{eq:VC} is equivalent to
\[
	\varphi_\textup{vc}(\tilde G_i(z),\tilde H_i(z))\leq 0\quad (i\in I^\textup{dc}).
\]
Exploiting the rules of subdifferential calculus, one can check that $\cdiff$- and $\partial$-stationarity
of the associated problem \eqref{eq:Lipschitzian_program} recover the M- and weak stationarity system
of the vanishing-constrained optimization problem, 
see \cite[Definition~2.4]{HoheiselPablosPooladianSchwartzSteverango2020} for the precise definitions.
Consequently, we can proceed as above in order to derive reasonable notions of asymptotic stationarity
and regularity for vanishing-constrained optimization problems.

Deriving the details in both cases is left to the interested reader.
\end{remark}

Finally, let us note that the results of this section can be extended to disjunctive programs 
with nonsmooth data functions, see e.g.\ \cite{KazemiKanzi2018,MovahedianNobakhtian2009,MovahedianNobakhtian2009b}
for applications, stationarity conditions, and constraint qualifications associated with nonsmooth complementarity-
and vanishing-constrained optimization problems.
We would like to point the reader's attention to the fact that reformulating nonsmooth complementarity
constraints with the aid of $\varphi_{\min}$ yields equality constraints of type \eqref{eq:min_ref_compl} again,
but the subdifferentials of the mappings $z\mapsto\varphi_{\min}(G_i(z),H_i(z))$ ($i\in I^{\textup{cc}}$) are now
much more difficult to evaluate in general since $G_i$ and $H_i$ are potentially nonsmooth. 
Most likely, one might only be in position to compute upper estimates of these subdifferentials with the aid
of suitable chain rules (\cref{lem:composed_subdifferential_of_minimum_function} does not apply anymore), 
i.e., a result similar to \cref{prop:asymptotic_stationarity_MPCC} would yield slightly
weaker conditions than actual A$\cdiff$- and A$\partial$-stationarity. Similar issues are likely to pop up
when considering other types of nonsmooth disjunctive constraints.

\section{Application to bilevel optimization problems with affine constraints}\label{sec:bilevel}

In this section, we suggest a solution method for the numerical handling of the
optimistic bilevel optimization problem
\begin{equation}\label{eq:upper_level}\tag{BPP}
	\min\limits_{x,y}\{f(x,y)\,|\,Cx+Dy\leq d,\,y\in\Psi(x)\}
\end{equation}
where $f\colon\R^n\times\R^m\to\R$ is continuously differentiable
and $\Psi\colon\R^n\tto\R^m$ is the solution mapping of the fully linear
parametric optimization problem
\begin{equation}\label{eq:lower_level}\tag{P$(x)$}
	\min\limits_y\{c^\top y\,|\,Ax+By\leq b\},
\end{equation}
i.e., for each $x\in\R^n$, $\Psi$ assigns to $x$ the (possibly empty) solution set
$\Psi(x)$ of \eqref{eq:lower_level}.
Above, $A\in\R^{p\times n}$, $B\in\R^{p\times m}$, $C\in\R^{q\times n}$, $D\in\R^{q\times m}$,
$b\in\R^p$, $c\in\R^m$, and $d\in\R^q$ are fixed matrices.
We refer to \eqref{eq:upper_level} and \eqref{eq:lower_level} as the upper and lower
level problem, respectively. Although the constraints $Cx+Dy\leq d$ as well as the lower
level problem \eqref{eq:lower_level} are fully affine, \eqref{eq:upper_level} possesses
a nonconvex feasible set which is only implicitly given. This makes \eqref{eq:upper_level}
notoriously difficult even if $f$ is fully linear.  Besides, the model \eqref{eq:upper_level} 
covers numerous interesting applications such as inverse linear programming, balancing in
energy and traffic networks, or data compression, 
see \cite{Dempe2002,Dempe2020,DempeKalashnikovPerezValdesKalashnykova2015} and the references therein
for an introduction to and a satisfying overview of bilevel optimization.

Subsequently, we want to exploit the so-called optimal value or marginal function 
$\vartheta\colon\R^n\to\overline{\R}$ of \eqref{eq:lower_level} given by
\[
	\forall x\in\R^n\colon\quad
	\vartheta(x):=\inf\limits_y\{c^\top y\,|\,Ax+By\leq b\}.
\]
Due to full linearity of \eqref{eq:lower_level}, $\vartheta$ is a convex and piecewise affine
function. A fully explicit formula for its subdifferential can be found e.g.\ in 
\cite[Proposition~4.1]{YeWu2008}.
\begin{lemma}\label{lem:subdiff_of_theta}
	For each $\bar x\in\dom\vartheta$, the formula
	\[
		\partial\vartheta(\bar x)=A^\top S(\bar x)
	\]
	holds where $S(\bar x)$ is the solution set of the dual problem associated with
	\hyperref[eq:lower_level]{\textup{(P$(\bar x)$)}} given by
	\[
		S(\bar x):=\argmax\limits_\lambda\{(A\bar x-b)^\top\lambda\,|\,B^\top\lambda=-c,\,\lambda\geq 0\}.
	\]
\end{lemma}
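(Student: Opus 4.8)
The plan is to represent $\vartheta$ through linear programming duality and then differentiate a support function along an affine map. Since $\bar x\in\dom\vartheta$, the lower level problem \hyperref[eq:lower_level]{\textup{(P$(\bar x)$)}} is feasible and bounded below, so LP strong duality guarantees that the dual feasible set $\Lambda:=\{\lambda\ge 0\,|\,B^\top\lambda=-c\}$, which does not depend on $x$, is nonempty and that $\vartheta(\bar x)=\max\{(A\bar x-b)^\top\lambda\,|\,\lambda\in\Lambda\}$. Applying LP duality at an arbitrary $x\in\R^n$ --- and using that for primal-infeasible $x$ the dual is unbounded above because $\Lambda\neq\varnothing$ --- one obtains
\begin{equation*}
	\forall x\in\R^n\colon\quad\vartheta(x)=\sup\{(Ax-b)^\top\lambda\,|\,\lambda\in\Lambda\}=\sigma_\Lambda(Ax-b),
\end{equation*}
where $\sigma_\Lambda$ is the support function of the polyhedron $\Lambda$. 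This makes manifest that $\vartheta$ is convex and piecewise affine, so by the remarks in \cref{sec:variational_analysis} its limiting subdifferential coincides with the convex one, and it suffices to compute $\partial\vartheta(\bar x)$ in the sense of convex analysis.

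Next I would apply the chain rule to the composition $\vartheta=\sigma_\Lambda\circ L$ with $L(x):=Ax-b$. Since $\sigma_\Lambda$ is a \emph{polyhedral} convex function (its epigraph is polyhedral), the subdifferential chain rule $\partial(\sigma_\Lambda\circ L)(\bar x)=A^\top\partial\sigma_\Lambda(L(\bar x))$ is valid without any constraint qualification, see the polyhedral calculus in \cite{RockafellarWets1998}. Finally, for a nonempty closed convex set the subdifferential of its support function at a point $w$ is the exposed face $\partial\sigma_\Lambda(w)=\argmax\{w^\top\lambda\,|\,\lambda\in\Lambda\}$; this set is nonempty exactly because $\sigma_\Lambda(A\bar x-b)=\vartheta(\bar x)<\infty$. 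Taking $w:=A\bar x-b$, the right-hand side is precisely $S(\bar x)$, and we conclude $\partial\vartheta(\bar x)=A^\top\partial\sigma_\Lambda(A\bar x-b)=A^\top S(\bar x)$.

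Should one wish to avoid the polyhedral chain rule, the identity follows from two inclusions: $A^\top S(\bar x)\subset\partial\vartheta(\bar x)$ is immediate from weak duality, since for $\bar\lambda\in S(\bar x)$ one has $\vartheta(x)\ge(Ax-b)^\top\bar\lambda=\vartheta(\bar x)+(A^\top\bar\lambda)^\top(x-\bar x)$ for all $x$, which is the subgradient inequality; the reverse inclusion can be read off by comparing support functions, as both sets are nonempty closed convex and $\sigma_{\partial\vartheta(\bar x)}(v)=\vartheta'(\bar x;v)=\max\{(Av)^\top\lambda\,|\,\lambda\in S(\bar x)\}=\sigma_{A^\top S(\bar x)}(v)$ for all directions $v$, the middle equality being the classical sensitivity formula for the directional derivative of an LP value function. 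I expect the main point requiring care to be the boundary case $\bar x\in\bd(\dom\vartheta)$, where $\vartheta$ is extended-real-valued and $\vartheta'(\bar x;v)=+\infty$ in some directions, together with the clean verification that strong duality --- hence nonemptiness of $S(\bar x)$ --- holds throughout $\dom\vartheta$; the support-function route absorbs this almost for free.
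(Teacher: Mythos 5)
Your proof is correct. Note that the paper does not actually prove \cref{lem:subdiff_of_theta}; it simply cites \cite[Proposition~4.1]{YeWu2008}, where the formula is obtained in a more general value-function setting. Your argument is a clean, self-contained derivation tailored to the fully linear case: the identity $\vartheta=\sigma_\Lambda\circ L$ with the $x$-independent dual feasible set $\Lambda$ is exactly the right structural observation, the polyhedral chain rule applies without any qualification condition, and $\partial\sigma_\Lambda(A\bar x-b)$ being the exposed face $S(\bar x)$ (nonempty because a finite supremum over a polyhedron is attained) closes the argument. The only points worth double-checking are the ones you already flag: that $\Lambda\neq\varnothing$ (from strong duality at $\bar x$) rules out $\vartheta(x)=-\infty$ everywhere and forces the dual to be unbounded precisely where the primal is infeasible, so that the support-function representation holds on all of $\R^n$ and not just on $\dom\vartheta$; and that at boundary points of $\dom\vartheta$ the polyhedrality of $\vartheta'(\bar x;\cdot)$ saves the closure issue in your alternative two-inclusion route. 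Either of your two routes would serve as a legitimate replacement for the citation.
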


In this section, we exploit the well-known observation that \eqref{eq:upper_level} is
equivalent to
\begin{equation}\label{eq:VFR}\tag{VFR}
	\min\limits_{x,y}\{f(x,y)\,|\,Ax+By\leq b,\,c^\top y-\vartheta(x)\leq 0,\,Cx+Dy\leq d\}
\end{equation}
which is referred to as the \emph{value function reformulation} of \eqref{eq:upper_level}.
Observing that the estimate $c^\top y-\vartheta(x)\geq 0$ holds for each pair $(x,y)\in\R^n\times\R^m$
which satisfies $Ax+By\leq b$, the constraint $c^\top y-\vartheta(x)\leq 0$ is actually active 
at each feasible point of \eqref{eq:VFR}. One can check that this causes that, exemplary,
$\cdiff$-NMFCQ cannot hold at the feasible points of \eqref{eq:VFR}, see e.g.\
\cite[Proposition~3.2]{YeZhu1995}. Besides, $\vartheta$ is
an implicitly given function whose full computation is not possible as soon as practically relevant
problems are under consideration. Nevertheless, starting with 
\cite{Outrata1988}, \eqref{eq:VFR} has been used successfully 
in the literature in order to derive optimality conditions and solution methods for 
\eqref{eq:upper_level}.

\subsection{Stationarity conditions}\label{sec:stationarity_conditions}

Noting that the optimal value function $\vartheta$ is locally Lipschitzian 
at each point from $\intr\dom\vartheta$,
the following result is an immediate consequence of \cref{lem:affine_constraints_AC_regular}.
\begin{proposition}\label{prop:stationarity_system_BPP}
	Let $(\bar x,\bar y)\in\R^n\times\R^m$ be a local minimizer of \eqref{eq:upper_level}
	and assume that $\bar x\in\intr\dom\vartheta$ holds.	
	Then we find multipliers $\lambda,\lambda'\in\R^p$ and $\mu\in\R^q$ as well
	as $\sigma\geq 0$ such that
	\begin{subequations}\label{eq:stationarity_BPP}
		\begin{align}
			\label{eq:stationarity_PBB_x}
				&0=\nabla_xf(\bar x,\bar y)+A^\top(\lambda-\sigma\lambda')+C^\top\mu,\\
			\label{eq:stationarity_BPP_y}
				&0=\nabla_yf(\bar x,\bar y)+B^\top(\lambda-\sigma\lambda')+D^\top\mu,\\
			\label{eq:stationarity_BPP_compl_slack_ll}
				&0\leq\lambda\perp b-A\bar x-B\bar y,\\
			\label{eq:stationarity_BPP_compl_slack_ul}
				&0\leq\mu\perp d-C\bar x-D\bar y,\\
			\label{eq:stationarity_BPP_subdiff}
				&0=c+B^\top\lambda',\quad 0\leq\lambda'\perp b-A\bar x-B\bar y.
		\end{align}
	\end{subequations}
\end{proposition}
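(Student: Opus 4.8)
The plan is to reduce the claim to \cref{lem:affine_constraints_AC_regular} and \cref{thm:AC_regular_local_minimizers} through the value function reformulation. First I would recall that \eqref{eq:upper_level} and \eqref{eq:VFR} share the same local minimizers, so $(\bar x,\bar y)$ is a local minimizer of \eqref{eq:VFR}. The latter is an instance of \eqref{eq:Lipschitzian_program} with objective $f$, with no equality constraints, and with inequality-constraint functions $(x,y)\mapsto(Ax+By-b)_i$, $(x,y)\mapsto c^\top y-\vartheta(x)$, and $(x,y)\mapsto(Cx+Dy-d)_i$. Since $\bar x\in\intr\dom\vartheta$ and $\vartheta$ is convex and piecewise affine on its domain, $\vartheta$ is locally Lipschitz and piecewise affine in a neighborhood of $\bar x$; hence $(x,y)\mapsto c^\top y-\vartheta(x)$ is piecewise affine near $(\bar x,\bar y)$, while the remaining constraint functions are affine. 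Recalling that the value function constraint is active at every feasible point of \eqref{eq:VFR}, all constraint functions active at $(\bar x,\bar y)$ are piecewise affine near $(\bar x,\bar y)$, so \cref{lem:affine_constraints_AC_regular} yields that $(\bar x,\bar y)$ is A$\partial$-regular, and \cref{thm:AC_regular_local_minimizers}\,(b) gives that $(\bar x,\bar y)$ is $\partial$-stationary for \eqref{eq:VFR}.

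Next I would unpack the $\partial$-stationarity system of \cref{def:C_M_stationarity} for \eqref{eq:VFR}. The objective is smooth and contributes $\nabla f(\bar x,\bar y)$. Each affine inequality contributes a constant gradient times a nonnegative multiplier that vanishes on inactive indices, which collectively gives the terms $(A^\top\lambda,B^\top\lambda)$ and $(C^\top\mu,D^\top\mu)$ with $\lambda,\mu\geq 0$ obeying the complementary slackness conditions \eqref{eq:stationarity_BPP_compl_slack_ll} and \eqref{eq:stationarity_BPP_compl_slack_ul}. The value function constraint, being active, contributes $\sigma\,\xi$ with $\sigma\geq 0$ and $\xi\in\partial(c^\top y-\vartheta(x))(\bar x,\bar y)$. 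Here I would apply the sum rule (one summand smooth) together with the observation that composing $-\vartheta$ with the projection $(x,y)\mapsto x$ only pads with zeros, obtaining $\partial(c^\top y-\vartheta(x))(\bar x,\bar y)=\{(\xi',c)\,|\,\xi'\in\partial(-\vartheta)(\bar x)\}$, and then the inclusion $\partial(-\vartheta)(\bar x)\subset\cdiff(-\vartheta)(\bar x)=-\partial\vartheta(\bar x)$ — the last equality because $\vartheta$ is convex — combined with \cref{lem:subdiff_of_theta} to write $\xi=(-A^\top\lambda',c)$ for some $\lambda'\in S(\bar x)$. Substituting into $0\in\nabla f(\bar x,\bar y)+\{(A^\top\lambda,B^\top\lambda)\}+\sigma\xi+\{(C^\top\mu,D^\top\mu)\}$ and splitting into $x$- and $y$-components, the $x$-part is exactly \eqref{eq:stationarity_PBB_x}; in the $y$-part I would substitute $\sigma c=-\sigma B^\top\lambda'$ (using $B^\top\lambda'=-c$ from $\lambda'\in S(\bar x)$) to arrive at \eqref{eq:stationarity_BPP_y}.

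It remains to establish \eqref{eq:stationarity_BPP_subdiff}. From $\lambda'\in S(\bar x)$ I read off $0=c+B^\top\lambda'$ and $\lambda'\geq 0$ directly. For the complementarity $\lambda'\perp b-A\bar x-B\bar y$ I would invoke linear programming duality: $\bar y$ is feasible for \eqref{eq:lower_level} by feasibility of $(\bar x,\bar y)$ for \eqref{eq:VFR}, and since $\bar y\in\Psi(\bar x)$ it is primal optimal, while $\lambda'$ is optimal for the dual defining $S(\bar x)$; complementary slackness for this primal--dual pair gives $(\lambda')^\top(b-A\bar x-B\bar y)=0$, which together with $\lambda'\geq 0$ and $A\bar x+B\bar y\leq b$ is precisely the second relation in \eqref{eq:stationarity_BPP_subdiff}.

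I expect the only delicate point to be the evaluation of $\partial(c^\top y-\vartheta(x))$ at $(\bar x,\bar y)$: one must be careful that the \emph{limiting} subdifferential of the concave function $-\vartheta$ need not equal $-\partial\vartheta(\bar x)$, but is merely contained in it via Clarke's subdifferential, which is exactly enough to secure the dual-variable representation $\xi=(-A^\top\lambda',c)$ with $\lambda'\in S(\bar x)$ required by \eqref{eq:stationarity_BPP}; everything else is bookkeeping of the affine data together with a standard LP duality argument. Alternatively, one may run the entire argument with $\cdiff$-stationarity, which is also available since \cref{lem:affine_constraints_AC_regular} provides A$\cdiff$-regularity as well, and in which case $\cdiff(-\vartheta)(\bar x)=-A^\top S(\bar x)$ holds with equality.
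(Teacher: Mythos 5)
Your proposal is correct and follows essentially the same route as the paper's proof: pass to \eqref{eq:VFR}, invoke \cref{lem:affine_constraints_AC_regular} together with \cref{thm:AC_regular_local_minimizers} to get stationarity without further constraint qualifications, identify $\cdiff(-\vartheta)(\bar x)=-\partial\vartheta(\bar x)$ by convexity, and use \cref{lem:subdiff_of_theta} with LP duality to produce $\lambda'$. Your closing observation about having to pass from $\partial(-\vartheta)(\bar x)$ to $\cdiff(-\vartheta)(\bar x)$ is exactly the point the paper makes in the remark following the proposition.
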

\begin{proof}
	By assumption, $(\bar x,\bar y)$ is a local minimizer of \eqref{eq:VFR} which is
	a Lipschitzian optimization problem in some neighborhood of this point.
	Furthermore, the constraints of \eqref{eq:VFR} are given in terms of piecewise affine
	data functions. Applying \cref{thm:AC_regular_local_minimizers} and
	\cref{lem:affine_constraints_AC_regular}, $(\bar x,\bar y)$ is a $\cdiff$-stationary
	point of \eqref{eq:VFR}. Thus, we find multipliers $\lambda\in\R^p$, $\mu\in\R^q$, and
	$\sigma\geq 0$ as well as $\xi\in\cdiff(-\vartheta)(\bar x)$ satisfying
	\begin{equation}\label{eq:surrugate_stationarity}
		\begin{aligned}
		&0=\nabla_xf(\bar x,\bar y)+A^\top\lambda+\sigma\xi+C^\top\mu,\\
		&0=\nabla_yf(\bar x,\bar y)+B^\top\lambda+\sigma c+D^\top\mu,
		\end{aligned}
	\end{equation}
	and \eqref{eq:stationarity_BPP_compl_slack_ll} as well as \eqref{eq:stationarity_BPP_compl_slack_ul}.
	Observe that we have $\cdiff(-\vartheta)(\bar x)=-\cdiff\vartheta(\bar x)=-\partial\vartheta(\bar x)$
	by convexity of $\vartheta$.
	Due to \cref{lem:subdiff_of_theta} and strong duality of linear programming,
	we find $\lambda'\in\R^p$ satisfying \eqref{eq:stationarity_BPP_subdiff}
	and $\xi=-A^\top\lambda'$. Inserting this and $c=-B^\top\lambda'$ into
	\eqref{eq:surrugate_stationarity} yields the claim.
\end{proof}

Let us point out some important facts regarding the above result. First, under the assumptions made,
$(\bar x,\bar y)$ is already a $\partial$-stationary point of \eqref{eq:VFR}. 
However, in order to evaluate $\partial(-\vartheta)(\bar x)$ successfully via
\cref{lem:subdiff_of_theta}, we have to estimate this set from above by $\cdiff(-\vartheta)(\bar x)$,
and, again, end up with $\cdiff$-stationarity. 
This observation already has been made in \cite[Sections~3 and~4]{DempeDuttaMordukhovich2007}
where a more general problem class has been considered.

Note that although the proof of \cref{prop:stationarity_system_BPP} via
\cref{thm:AC_regular_local_minimizers} is novel, the result is well known in the
literature on bilevel optimization. Indeed, the special structure of the lower level
problem \eqref{eq:lower_level} implies that \eqref{eq:VFR} is so-called \emph{partially calm}
at $(\bar x,\bar y)$, 
see \cite[Definition~3.1]{YeZhu1995} and \cite[Theorem~4.1]{MehlitzMinchenkoZemkoho2020} for details,
which equivalently means that there is some $\sigma\geq 0$ such that $(\bar x,\bar y)$ is a
local minimizer of
\[
	\min\limits_{x,y}\{f(x,y)+\sigma(c^\top y-\vartheta(x))\,|\,Ax+By\leq b,\,Cx+Dy\leq d\},
\]
and this problem can be tackled with standard KKT-theory since its constraints are affine.
This approach precisely recovers the stationarity system \eqref{eq:stationarity_BPP}.
Related approaches have been used e.g.\ in
\cite{DempeDuttaMordukhovich2007,DempeZemkoho2013,MordukhovichNamPhan2012} in order to derive
necessary optimality conditions for more general bilevel optimization problems.
Finally, we refer the interested reader to \cite[Corollary~4.1]{Ye2004} where yet another proof of
\cref{prop:stationarity_system_BPP} can be found which does not rely on the concept of 
partial calmness as well. 

Note that the concept of asymptotic regularity is also applicable to more general bilevel
optimization problems via their optimal value reformulation as long as the associated
lower level optimal value function is locally Lipschitz continuous in a neighborhood of the
reference point. Keeping the weakness of asymptotic regularity in mind, this approach could
be suitable to find constraint qualifications which actually apply in a reasonable way to
bilevel optimization problems. Further potentially promising constraint qualifications 
can be obtained when
applying these concepts to the combined reformulation of the bilevel optimization problem where
lower level value function and necessary optimality conditions are used in parallel, see
\cite{YeZhu2010}. A detailed investigation of these approaches is, however, beyond this paper's scope.

\subsection{A penalty-DC-method and its convergence properties}\label{sec:theory_DC}

In the remainder of this section, we assume that $f=f_1-f_2$ holds where the functions
$f_1,f_2\colon\R^n\times\R^m\to\R$ are continuously differentiable and convex, i.e.,
that $f$ is a so-called DC-function, see \cite{HorstThoai1999,ThiDinh2018} for an overview of
DC-optimization.
This implies that \eqref{eq:VFR} is a DC-problem, i.e., its objective function as well as its
constraints are DC-functions.
Below, we suggest a simple algorithm for the numerical solution of \eqref{eq:upper_level}
which makes use of this observation while utilizing that the nonsmoothness is
encapsulated only in $\vartheta$. Let us mention that this structural properties already 
have been used partially in the recent paper \cite{YeYuanZengZhang2021} for an algorithmic treatment 
of more general bilevel optimization problems where the data is allowed to be nonsmooth. 
More specifically and applied to our setting, the authors investigated the surrogate
problem
\begin{equation}\label{eq:relaxed_OVR}\tag{VFR$(\varepsilon)$}
		\min\limits_{x,y}\{f(x,y)\,|\,Ax+By\leq b,\,c^\top y-\vartheta(x)\leq\varepsilon,\,Cx+Dy\leq d\},
\end{equation}
for some \emph{fixed} relaxation parameter $\varepsilon\geq 0$, which is iteratively
solved with some inexact DC-methods. In case $\varepsilon>0$, a convergence theory is provided
based on the observation that \eqref{eq:relaxed_OVR} behaves regular in this case. However, the
feasible set of \eqref{eq:relaxed_OVR} might be essentially larger than the one of \eqref{eq:VFR}
which is why this method most likely does not compute (asymptotically) feasible points 
of \eqref{eq:upper_level}. Fixing $\varepsilon=0$, no convergence guarantees were provided
in \cite{YeYuanZengZhang2021}. We note that the authors of this paper did not investigate 
the situation where the relaxation parameter $\varepsilon$ is iteratively sent towards $0$.
We would like to mention
\cite{HoaiTaoNguyenVan2008}
where DC-programming has been used to solve bilevel optimization problems 
of special structure to global optimality.
However, in the latter paper, the authors do not exploit convexity of the optimal value function
but make use of lower level optimality conditions to proceed. 
Yet another related approach for the global solution of bilevel optimization problems with fully
convex lower level data can be found in \cite{DempeHarderMehlitzWachsmuth2019,DempeFranke2016}.

Here, we stick a different path and investigate a \emph{penalty} approach w.r.t.\ the
constraint $c^\top y-\vartheta(x)\leq 0$ where the resulting subproblems are
solved with the aid of a DC-method. 
More precisely, exploiting the fact that the resulting penalized surrogate problems only possess
affine constraints, they can be solved with the aid of the recently developed boosted DC-method from \cite{AragonArtachoCampoyVuong2020} in reasonable time.
This method basically computes $\cdiff$-stationary points of DC-problems of type
\begin{equation}\label{eq:DC_problem}
	\min\limits_w\{g(w)-h(w)\,|\,\tilde A w\leq \tilde b\}
\end{equation}
where $g,h\colon\R^s\to\R$ are convex functions such that $g$ is continuously differentiable
and $\tilde A\in\R^{t\times s}$ as well as $\tilde b\in\R^t$ are fixed matrices. 
For some fixed iterate $w^k\in\R^s$ and some subgradient $\xi^k\in\partial h(w^k)$, one
identifies a minimizer $z^k\in\R^n$ of the partially linearized \emph{convex} subproblem
\[
	\min\{g(w)-(\xi^k)^\top w\,|\,\tilde Aw\leq\tilde b\}
\]
and performs a suitable \emph{line search} along the direction $z^k-w^k$. 
For step size $\alpha^k$, the new iterate is set to $w^{k+1}:=z^k+\alpha^k(z^k-w^k)$.
The line search is responsible for the speed up. For details, we refer the interested reader to
\cite{AragonArtachoCampoyVuong2020}. Note that setting $w^{k+1}:=z^k$, i.e., choosing step size
$\alpha^k:=0$, recovers the classical DC-method.

For brevity of notation, let us introduce $\mathcal Z_\ell,\mathcal Z_u\subset\R^n\times\R^m$ by means of
\begin{align*}
	\mathcal Z_\ell&:=\{(x,y)\in\R^n\times\R^m\,|\,Ax+By\leq b\},
	\\
	\mathcal Z_u&:=\{(x,y)\in\R^n\times\R^m\,|\,Cx+Dy\leq d\}.
\end{align*}
Throughout the section, we assume that
$\{x\in\R^n\,|\,\exists y\in\R^m\colon\,(x,y)\in \mathcal Z_\ell\}\subset\intr\dom \vartheta$
holds.
This assumption guarantees that $\varphi$ is locally Lipschitz continuous at all \emph{relevant} points.
Furthermore, we assume that $\mathcal Z_u\cap \mathcal Z_\ell$ is nonempty and compact in order to
make sure that \eqref{eq:upper_level} possesses a solution.

In \cref{alg:boosted_DC_penalty}, we state the foreshadowed solution method 
which can be used to tackle \eqref{eq:upper_level}.
\begin{algorithm}
	\begin{description}
		\item [{S0}] Choose $\sigma^0>0$, $\gamma>1$, and 
			$(x^0,y^0)\in \mathcal Z_u\cap \mathcal Z_\ell$.
			Set $k:=0$.
		\item [{S1}] Compute a $\cdiff$-stationary solution 
			$(x^{k+1},y^{k+1})$ of the DC-problem
			\begin{equation}\label{eq:penalized_problem_DC}
				\min\limits_{x,y}\left\{
					f_1(x,y)+\sigma^k\,c^\top y-(f_2(x,y)+\sigma_k\,\vartheta(x))\,\middle|\,
					\begin{aligned}
						&Ax+By\leq b,\\
						&Cx+Dy\leq d
					\end{aligned}
				\right\}
			\end{equation}
			with the aid of the boosted DC-algorithm 
			exploiting the starting point $(x^k,y^k)$.
		\item [{S2}] If $y^{k+1}\in\Psi(x^{k+1})$ holds, STOP. Return $(x^{k+1},y^{k+1})$.
		\item [{S3}] Set $\sigma^{k+1}:=\gamma\,\sigma^k$ as well as $k:=k+1$ and go to \textbf{S1}.
	\end{description}
	\caption{
		Penalty-DC-method for \eqref{eq:upper_level}
		\label{alg:boosted_DC_penalty}
		}
\end{algorithm}
Let us point out the benefits of solving the subproblem \eqref{eq:penalized_problem_DC} with a DC-method.
Clearly, \eqref{eq:penalized_problem_DC} is not given explicitly since $\vartheta$ is an
implicit object whose full computation is not a reasonable option in numerical practice.
However, in each step of the DC-method, the concave part within the objective of
\eqref{eq:penalized_problem_DC} is linearized via subgradient information, and we already
know that subgradients of $\vartheta$ can be computed efficiently by solving a single linear
optimization problem, see \cref{lem:subdiff_of_theta}. 

For our convergence analysis, we assume that \cref{alg:boosted_DC_penalty} produces an
infinite sequence $\{(x^k,y^k)\}_{k\in\N}$. Indeed, if \cref{alg:boosted_DC_penalty}
terminates after finitely many steps, then the final iterate is $\cdiff$-stationary for
\eqref{eq:penalized_problem_DC}. Exploiting \cref{lem:subdiff_of_theta} again, this
already shows that the final iterate is $\cdiff$-stationary for \eqref{eq:VFR}, i.e.,
a point satisfying the stationarity conditions from
\cref{prop:stationarity_system_BPP}.

\begin{lemma}\label{lem:accumulation_points_feasible}
	Each accumulation point $(\bar x,\bar y)\in\R^n\times\R^m$ of
	the sequence $\{(x^k,y^k)\}_{k\in\N}\subset\R^n\times\R^m$ generated by
	\cref{alg:boosted_DC_penalty} is feasible to \eqref{eq:VFR} and, thus, 
	\eqref{eq:upper_level} provided there is a constant
	$\kappa>0$ such that
	\[
		\forall k\in\N\colon\quad
		f(x^{k+1},y^{k+1})+\sigma^k(c^\top y^{k+1}-\vartheta(x^{k+1}))\leq \kappa.
	\]
\end{lemma}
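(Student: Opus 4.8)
The plan is to combine three structural facts: every iterate generated by \cref{alg:boosted_DC_penalty} lies in $\mathcal Z_\ell\cap\mathcal Z_u$, the penalty parameters $\sigma^k$ diverge to $+\infty$, and $\vartheta$ is continuous at the accumulation point under consideration.

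First I would record that, by Step~S1, each pair $(x^{k+1},y^{k+1})$ is a $\cdiff$-stationary and, in particular, feasible point of \eqref{eq:penalized_problem_DC}, so $(x^{k+1},y^{k+1})\in\mathcal Z_\ell\cap\mathcal Z_u$ for all $k\in\N$, and $(x^0,y^0)$ enjoys the same property by Step~S0. Since $Ax^{k+1}+By^{k+1}\leq b$, the vector $y^{k+1}$ is feasible for \eqref{eq:lower_level} at $x^{k+1}$, whence $c^\top y^{k+1}-\vartheta(x^{k+1})\geq 0$ for every $k$. Because $\mathcal Z_\ell\cap\mathcal Z_u$ is compact and $f$ is continuous, $f$ is bounded from below on this set, say by some $M\in\R$, and the postulated inequality then yields
\[
	0\leq c^\top y^{k+1}-\vartheta(x^{k+1})
	\leq\frac{\kappa-f(x^{k+1},y^{k+1})}{\sigma^k}
	\leq\frac{\kappa-M}{\sigma^k}
\]
for all $k\in\N$. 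As $\sigma^k=\gamma^k\sigma^0\to\infty$ by Steps~S0 and~S3, the right-hand side tends to $0$, so $c^\top y^k-\vartheta(x^k)\to 0$ as $k\to\infty$.

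Next I would take a subsequence $\{(x^{k_j},y^{k_j})\}_{j\in\N}$ with $(x^{k_j},y^{k_j})\to(\bar x,\bar y)$. Since $\mathcal Z_\ell$ and $\mathcal Z_u$ are closed polyhedra, $(\bar x,\bar y)\in\mathcal Z_\ell\cap\mathcal Z_u$, i.e., $A\bar x+B\bar y\leq b$ and $C\bar x+D\bar y\leq d$; in particular $\bar x$ belongs to $\{x\in\R^n\,|\,\exists y\in\R^m\colon(x,y)\in\mathcal Z_\ell\}$, which is contained in $\intr\dom\vartheta$ by the standing assumption of this section, so $\vartheta$, being convex, is continuous at $\bar x$. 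Passing to the limit along the subsequence then gives $c^\top\bar y-\vartheta(\bar x)=\lim_{j\to\infty}\bigl(c^\top y^{k_j}-\vartheta(x^{k_j})\bigr)=0$. Hence $(\bar x,\bar y)$ satisfies all constraints of \eqref{eq:VFR}, so it is feasible for \eqref{eq:VFR} and, by the equivalence between \eqref{eq:VFR} and \eqref{eq:upper_level}, also for \eqref{eq:upper_level}.

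I do not expect a genuine obstacle here; the only point requiring care is turning the joint bound from the hypothesis into a bound on $c^\top y^{k+1}-\vartheta(x^{k+1})$ alone, which is exactly where the compactness of $\mathcal Z_\ell\cap\mathcal Z_u$ (hence the lower bound on $f$) and the feasibility $c^\top y^{k+1}-\vartheta(x^{k+1})\geq 0$ of the iterates come into play, while the remainder reduces to the continuity of $\vartheta$ at $\bar x$ and the divergence of $\{\sigma^k\}_{k\in\N}$.
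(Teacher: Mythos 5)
Your proof is correct and follows essentially the same route as the paper: bound $c^\top y^{k+1}-\vartheta(x^{k+1})$ between $0$ and a quantity of order $1/\sigma^k$, then pass to the limit using continuity of $\vartheta$ and $\sigma^k\to\infty$. The only cosmetic difference is that you bound $f$ from below via compactness of $\mathcal Z_\ell\cap\mathcal Z_u$, whereas the paper bounds $\{f(x^{k+1},y^{k+1})\}_{k\in\N}$ directly along the convergent (sub)sequence; both are valid, and your version of spelling out the nonnegativity of the value-function gap and the continuity of $\vartheta$ at $\bar x$ is a welcome bit of extra care.
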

\begin{proof}
	Assume w.l.o.g.\ that $x^k\to\bar x$ and $y^k\to\bar y$ hold.
	By continuity of $f$, the sequence $\{f(x^{k+1},y^{k+1})\}_{k\in\N}$ is bounded.
	Thus, we find $\tilde \kappa>0$ such that
	\begin{align*}
		\forall k\in\N\colon\quad
		0\leq c^\top y^{k+1}-\vartheta(x^{k+1})\leq\tilde \kappa/\sigma_k.
	\end{align*} 
	Taking the limit $k\to\infty$ while exploiting the continuity of all appearing functions
	and $\sigma_k\to\infty$,
	this shows $c^\top\bar y-\vartheta(\bar x)=0$, i.e., $\bar y\in\Psi(\bar x)$.
	This completes the proof.
\end{proof}

Note that the boundedness assumption in \cref{lem:accumulation_points_feasible} 
can be always guaranteed if the subproblems \eqref{eq:penalized_problem_DC} 
are solved to global optimality. 
In this case, one can choose $\kappa:=f(\tilde x,\tilde y)$ as an upper bound of the objective
values where $(\tilde x,\tilde y)\in\R^n\times\R^m$ is an arbitrary feasible point of
\eqref{eq:upper_level}. In this case, each accumulation point of $\{(x^k,y^k)\}_{k\in\N}$ is
already a global minimizer of \eqref{eq:upper_level}.
However, one should note that solving the subproblems globally is, in general, only possible 
by decomposing the domain of $\vartheta$ into its so-called \emph{regions of stability}
where $\vartheta$ behaves in an affine
way, see \cite[Section~4]{DempeFranke2016} for a related approach, 
since this allows to trace back the solution of
\eqref{eq:penalized_problem_DC} to the solution of finitely many convex subproblems if $f_2$
vanishes. Obviously, this approach is already computationally expensive whenever the 
dimension $n$ is of medium size.

Now, we state a convergence result regarding \cref{alg:boosted_DC_penalty}.
\begin{theorem}\label{thm:convergence_to_asymptotically_stationary_points}
	Assume that \cref{alg:boosted_DC_penalty} generates
	a sequence $\{(x^k,y^k)\}_{k\in\N}\subset\R^n\times\R^m$, and let
	$(\bar x,\bar y)\in\R^n\times\R^m$ be an accumulation point of this sequence
	which is feasible to \eqref{eq:upper_level}.
	Then $(\bar x,\bar y)$ is stationary for \eqref{eq:upper_level} in the sense
	of \cref{prop:stationarity_system_BPP}, i.e., there exist multipliers which
	solve the stationarity system \eqref{eq:stationarity_BPP}.
\end{theorem}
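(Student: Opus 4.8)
The overall strategy is to show that the accumulation point $(\bar x,\bar y)$ is asymptotically $\cdiff$-stationary for \eqref{eq:VFR} and then invoke the asymptotic regularity of that problem, which is available by \cref{lem:affine_constraints_AC_regular} since all constraint functions of \eqref{eq:VFR} are piecewise affine. More precisely, the plan is: first extract a subsequence along which $(x^k,y^k)\to(\bar x,\bar y)$; second, unpack what $\cdiff$-stationarity of $(x^{k+1},y^{k+1})$ for the penalized subproblem \eqref{eq:penalized_problem_DC} means in terms of subgradients and KKT multipliers; third, identify the penalty term $\sigma^k$ times a subgradient of $-\vartheta$ as the emerging multiplier for the value-function constraint $c^\top y-\vartheta(x)\le 0$, so that the resulting sequence of (approximate) stationarity conditions is exactly an A$\cdiff$-stationarity certificate for \eqref{eq:VFR}; fourth, apply \cref{thm:AC_regular_local_minimizers}(a) together with \cref{lem:affine_constraints_AC_regular} to conclude $\cdiff$-stationarity of \eqref{eq:VFR}; finally translate this, via \cref{lem:subdiff_of_theta} and strong duality, into the multiplier system \eqref{eq:stationarity_BPP}, exactly as in the proof of \cref{prop:stationarity_system_BPP}.

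First I would write out the consequence of \textbf{S1}: the boosted DC-algorithm returns a $\cdiff$-stationary point $(x^{k+1},y^{k+1})$ of \eqref{eq:penalized_problem_DC}. Since the objective of \eqref{eq:penalized_problem_DC} is $\bigl(f_1(x,y)+\sigma^k c^\top y\bigr)-\bigl(f_2(x,y)+\sigma^k\vartheta(x)\bigr)$ with convex, continuously differentiable $f_1,f_2$ and convex piecewise affine $\vartheta$, $\cdiff$-stationarity means
\begin{equation*}
	0\in\nabla f_1(x^{k+1},y^{k+1})+\sigma^k(0,c)-\nabla f_2(x^{k+1},y^{k+1})-\sigma^k\,\partial\vartheta(x^{k+1})\times\{0\}+\mathcal N_{\mathcal Z_\ell\cap\mathcal Z_u}(x^{k+1},y^{k+1}),
\end{equation*}
where the normal cone to the polyhedron $\mathcal Z_\ell\cap\mathcal Z_u$ can be written, via the standard polyhedral calculus, as $A^\top\lambda^k+C^\top\mu^k$ (and analogously in the $y$-block with $B^\top,D^\top$) for multipliers $\lambda^k\ge 0$, $\mu^k\ge 0$ satisfying the complementarity relations $0\le\lambda^k\perp b-Ax^{k+1}-By^{k+1}$ and $0\le\mu^k\perp d-Cx^{k+1}-Dy^{k+1}$. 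Picking $\xi^k\in\partial\vartheta(x^{k+1})$ achieving the inclusion, and setting $\sigma^k\xi^k$ as the multiplier of the value-function constraint, one reads off exactly conditions \eqref{eq:asymptotic_stationarity_Lagrangian}–\eqref{eq:asymptotic_stationarity_compl_slack} for \eqref{eq:VFR} (recalling that $\cdiff(-\vartheta)(x)=-\partial\vartheta(x)$ by convexity, and that the value-function constraint is active at the feasible point $(\bar x,\bar y)$, so its complementarity-slackness condition is automatically satisfied). Here $\varepsilon^k=0$ for every $k$, so the convergence $\varepsilon^k\to 0$ is trivial, and $z^k:=(x^{k+1},y^{k+1})\to(\bar x,\bar y)$ along the chosen subsequence gives the remaining requirement of \cref{def:asymptotic_stationarity}. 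Hence $(\bar x,\bar y)$ is A$\cdiff$-stationary for \eqref{eq:VFR}.

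Then I would invoke the machinery: \eqref{eq:VFR} has piecewise affine constraint data in a neighborhood of $(\bar x,\bar y)$ (this uses $\bar x\in\intr\dom\vartheta$, guaranteed by the blanket assumption $\{x\mid\exists y:(x,y)\in\mathcal Z_\ell\}\subset\intr\dom\vartheta$, so that $\vartheta$ is locally Lipschitz and piecewise affine around $\bar x$), so \cref{lem:affine_constraints_AC_regular} yields A$\cdiff$-regularity at $(\bar x,\bar y)$. Combining A$\cdiff$-stationarity with A$\cdiff$-regularity — precisely the content of \cref{lem:AC_stationarity_via_M} read through \cref{def:asymptotic_regularity}, or one may cite \cref{thm:AC_regular_local_minimizers}(a) after noting that A$\cdiff$-regularity implies wA$\partial$-regularity — gives that $(\bar x,\bar y)$ is a $\cdiff$-stationary point of \eqref{eq:VFR}. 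From this point the argument is identical to the end of the proof of \cref{prop:stationarity_system_BPP}: $\cdiff$-stationarity of \eqref{eq:VFR} produces multipliers $\lambda\in\R^p$, $\mu\in\R^q$, $\sigma\ge 0$ and some $\xi\in\cdiff(-\vartheta)(\bar x)=-\partial\vartheta(\bar x)$ fulfilling \eqref{eq:surrugate_stationarity} together with \eqref{eq:stationarity_BPP_compl_slack_ll} and \eqref{eq:stationarity_BPP_compl_slack_ul}; \cref{lem:subdiff_of_theta} writes $\xi=-A^\top\lambda'$ for some $\lambda'\in S(\bar x)$, and strong duality of linear programming gives $0=c+B^\top\lambda'$ and $0\le\lambda'\perp b-A\bar x-B\bar y$, i.e.\ \eqref{eq:stationarity_BPP_subdiff}; substituting back yields the full system \eqref{eq:stationarity_BPP}.

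The main obstacle I anticipate is the bookkeeping in the second step: carefully reconciling the $\cdiff$-stationarity certificate returned by the boosted DC-algorithm for \eqref{eq:penalized_problem_DC} — which is phrased with the Clarke subdifferential of the \emph{whole} DC objective and a normal cone to $\mathcal Z_\ell\cap\mathcal Z_u$ — with the explicit per-constraint format of A$\cdiff$-stationarity in \cref{def:asymptotic_stationarity}, making sure the sign of the penalty multiplier $\sigma^k\ge 0$ on the value-function constraint is consistent with the fact that $\cdiff(-\vartheta)=-\partial\vartheta$ enters with the right orientation, and verifying that the emerging multiplier sequences need not be bounded (which is fine, since A$\cdiff$-stationarity tolerates unbounded multipliers, and boundedness is only recovered after A$\cdiff$-regularity is applied). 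There is also a small point to address explicitly, namely that feasibility of $(\bar x,\bar y)$ for \eqref{eq:upper_level} — which is part of the hypothesis, and which \cref{lem:accumulation_points_feasible} supplies under a mild boundedness condition — is what makes the value-function constraint active, so that its complementarity-slackness in \eqref{eq:asymptotic_stationarity_compl_slack} holds without any asymptotic slack. Everything else is routine polyhedral normal-cone calculus.
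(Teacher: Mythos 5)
Your proposal is correct and follows essentially the same route as the paper's proof: write out the $\cdiff$-stationarity certificate of the penalized subproblem with multipliers $\lambda^k,\mu^k$ and a subgradient $\xi^k\in\partial\vartheta(x^{k+1})$, recognize this (together with feasibility of the limit, which makes the value-function constraint active) as an A$\cdiff$-stationarity certificate for \eqref{eq:VFR}, invoke A$\cdiff$-regularity from \cref{lem:affine_constraints_AC_regular} to pass to $\cdiff$-stationarity, and identify \eqref{eq:stationarity_BPP} as that stationarity system via \cref{lem:subdiff_of_theta}. The only difference is that you spell out the bookkeeping (activity of the value-function constraint, the sign of $\sigma^k$, unboundedness of the multipliers) that the paper leaves implicit.
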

\begin{proof}
	Let us assume w.l.o.g.\ that $x^k\to\bar x$ and $y^k\to\bar y$ hold.
	By construction of the method, for each $k\in\N$, we find $\xi^k\in\partial\vartheta(x^{k+1})$,
	$\lambda^k\in\R^p$, and $\mu^k\in\R^q$ which satisfy
	\begin{subequations}\label{eq:approx_C_St_algorithms}
		\begin{align}
				\label{eq:approx_CSt_x}
				&0=\nabla_xf(x^{k+1},y^{k+1})
					-\sigma_k\xi^k
					+A^\top\lambda^k+C^\top\mu^k,\\
				\label{eq:approx_CSt_y}
				&0=\nabla_yf(x^{k+1},y^{k+1})+
					\sigma_k\,c
					+B^\top\lambda^k+D^\top\mu^k,\\
				\label{eq:approx_CSt_compl_g}
				&0\leq\lambda^k\perp b-Ax^{k+1}-By^{k+1},\\
				\label{eq:approx_CSt_compl_Ga}
				&0\leq\mu^k\perp d-Cx^{k+1}-Dy^{k+1}.			
		\end{align}
	\end{subequations}
	Due to $x^k\to\bar x$ and $y^k\to\bar y$ as well as feasibility of
	$(\bar x,\bar y)$ for \eqref{eq:upper_level}, we find that
	$(\bar x,\bar y)$ is A$\cdiff$-stationary for \eqref{eq:VFR}.
	Recalling that each feasible point of \eqref{eq:VFR} is A$\cdiff$-regular,
	$(\bar x,\bar y)$ is already $\cdiff$-stationary for \eqref{eq:VFR}.
	As we have already mentioned before, \eqref{eq:stationarity_BPP} corresponds to
	the $\cdiff$-stationarity system of \eqref{eq:VFR}.
\end{proof}

Let us note that the result of \cref{thm:convergence_to_asymptotically_stationary_points} remains
true whenever the subproblems \eqref{eq:penalized_problem_DC} are only solved up to approximate
$\cdiff$-stationarity in the inner iteration, i.e., 
in terms of the more general DC-problem \eqref{eq:DC_problem}, we
need to have
\[
	\forall k\in\N\colon\quad
	\varepsilon^k\in\nabla g(w^{k+1})-\partial h(w^{k+1})+\tilde A^\top\zeta^k,
	\qquad
	0\leq\zeta^k\perp\tilde b-\tilde Aw^{k+1}
\]
such that $\varepsilon^k\to 0$ is guaranteed as $k\to\infty$. 
Unluckily, the boosted DC-method from \cite{AragonArtachoCampoyVuong2020} does not guarantee
this property of the outputs in general.
Related phenomena motivated the study in \cite{HelouSantosSimoes2020} which, however, comes
along with other drawbacks.

\subsection{Implementation and numerical experiments}

In this section, we are going to provide some numerical results regarding the
computational competitiveness of \cref{alg:boosted_DC_penalty}. Therefore, we
compare the suggested method with two other intuitive penalty approaches which can be used
to solve \eqref{eq:upper_level}. On the one hand, it might be more natural to exploit the
standard DC-algorithm (without boosting) to solve the subproblems \eqref{eq:penalized_problem_DC} in
\cref{alg:boosted_DC_penalty} which leads to a simpler method and speeds up the inner
DC-iterations since no step size computation is necessary. On the other hand, it
might be reasonable to replace \eqref{eq:penalized_problem_DC} by means of
\begin{equation}\label{eq:subproblem_duality_gap}
	\min\limits_{x,y,u}
	\left\{
		f(x,y)+\sigma^k(c^\top y-(Ax-b)^\top u)\,\middle|\,
		\begin{aligned}
			&Ax+By\leq b,\,Cx+Dy\leq d,\\
			&B^\top u=-c,\,u\geq 0
		\end{aligned}
	\right\}
\end{equation}
in \cref{alg:boosted_DC_penalty}
where the appearing penalty term models the duality gap of the lower level problem.
This idea is basically taken from \cite{WhiteAnandalingam1993}. Observe that 
\eqref{eq:subproblem_duality_gap} is, in contrast to \eqref{eq:penalized_problem_DC}, a
fully explicit optimization problem which can be solved via standard methods from constrained
optimization. However, it does not possess the natural DC-structure we observed in
\eqref{eq:penalized_problem_DC}. Furthermore, we are in need to treat the lower level Lagrange
multiplier (or dual variable) $u$ as an explicit variable which might be a delicate issue since
\eqref{eq:subproblem_duality_gap} is related to a penalized version of the KKT-reformulation of
the bilevel optimization problem \eqref{eq:upper_level}.
More precisely, the presence of $u$ could induce artificial local minimizers and stationary
points of \eqref{eq:subproblem_duality_gap} which do not correspond to local minimizers or stationary points of
\eqref{eq:penalized_problem_DC}, see \cite{BenkoMehlitz2020,DempeDutta2012} for details on this phenomenon.

Here, we challenge these three penalty methods with the aid of three bilevel optimization problems:
\begin{enumerate}
	\item the fully linear bilevel optimization problem from \cite[Example~2]{BardFalk1982},
	\item the problem from \cite[Example~3.3]{LamparielloSagratella2017} which possesses a
		quadratic upper level objective function, and
	\item an inverse transportation problem where the offer has to be reconstructed from a noisy
		transportation plan. 
\end{enumerate}
For each of these examples, we first provide a description of the problem data. Afterwards, we present
our numerical results. More precisely, we challenge the three methods with random starting points
and compare the outcome by means of computed function values, number of (outer) penalty iterations,
number of (inner) DC-iterations (only for the DC-type methods), and the size of the (lower level)
duality gap at the computed solution.
In order to provide a reasonable visually convincing quantitative comparison, 
we make use of performance profiles, see \cite{DolanMore2002}.

In the reminder of this section, we first comment on the actual numerical implementation of the
algorithms. Afterwards, our results are presented.

\subsubsection{Implementation}

For a numerical comparison, we implement the following penalty methods for the computational
treatment of \eqref{eq:upper_level}:
\begin{itemize}[leftmargin=1.7cm]
	\item[\textbf{PBDC:}] \cref{alg:boosted_DC_penalty} where the subproblems 
		\eqref{eq:penalized_problem_DC} are solved
		with the aid the boosted DC-method from \cite{AragonArtachoCampoyVuong2020},
	\item[\textbf{PDC:}] \cref{alg:boosted_DC_penalty} where the subproblems
		\eqref{eq:penalized_problem_DC} are solved with the standard DC-method, and
	\item[\textbf{PDG:}] in each iteration, we solve the subproblem
		\eqref{eq:subproblem_duality_gap} instead of \eqref{eq:penalized_problem_DC}
		in \cref{alg:boosted_DC_penalty}.
\end{itemize}
All these methods have been implemented using MATLAB 2021a. 
For the solution of the appearing subproblems, we employed MATLAB's \texttt{fmincon} in default
mode. Furthermore, appearing linear optimization problems, e.g., for the pointwise evaluation of the
value function $\vartheta$ or its subdifferential, see \cref{lem:subdiff_of_theta}, are
solved via MATLAB's \texttt{linprog} routine. The sequence of penalty parameters is generated 
via $\sigma^0:=1$ and $\gamma:=1.2$.
Each of the algorithms is terminated whenever
we have $|c^\top(y^{k+1}-y^{k+1}_s)|\leq 10^{-7}$ for an arbitrary lower level
solution $y^{k+1}_s\in\Psi(x^{k+1})$ or the number of outer iterations exceeds $200$ 
(the latter, actually, did not happen).
Note that this weakens the actual termination criterion $y^{k+1}\in\Psi(x^{k+1})$ which was
used in \cref{alg:boosted_DC_penalty}.
For \textbf{PBDC} and \textbf{PDC}, we limited the number of inner DC-iterations to $100$
(we never hit this bound during our experiments).
Furthermore, using the notation from \cref{sec:theory_DC}, we exploited the standard 
termination criterion $\norm{z^k-w^k}\leq 10^{-4}$ for
both DC-methods. According to \cite{AragonArtachoCampoyVuong2020}, the parameters for the
line search in the boosted DC-method are fixed to $\bar\lambda:=1$, $\alpha:=10^{-2}$, and
$\beta:=10^{-1}$. In order to enhance the numerical performance of the DC-methods
\textbf{PBDC} and \textbf{PDC}, we added the zero 
$\tfrac12(\norm{x}^2+\norm{y}^2)-\tfrac12(\norm{x}^2+\norm{y}^2)$
to the objective function of \eqref{eq:penalized_problem_DC} in order to make both convex parts
of it strongly convex.

All methods were challenged by 100 random starting points. These were generated by firstly choosing
random vectors from a suitable box which are secondly projected onto the polyhedron 
$\mathcal Z_u\cap\mathcal Z_\ell$.
For the method \textbf{PDG},
we additionally chose $u^0$ as a solution of the linear optimization problem
$\min_u\{\mathtt e^\top u\,|\,B^\top u=-c,\,u\geq 0\}$, where $\mathtt e\in\R^p$ denotes
the all-ones-vector, and, for each $k\in\N$, used
$(x^k,y^k,u^k)$ as the initial point for the numerical solution of 
the subproblem \eqref{eq:subproblem_duality_gap}.

Finally, let us briefly comment on the creation of performance profiles.
For the set $\mathcal A:=\{\textbf{PBDC},\textbf{PDC},\textbf{PDG}\}$ of algorithms
and the set $\mathcal S:=\{1,\ldots,\ell\}$ of indices associated with random starting
points, let $w^s_a$ be the output of algorithm $a\in\mathcal A$ with random starting point 
$s\in\mathcal S$. For a scalar performance measure $\pi$ (representing computed function values,
number of outer iterations, final lower level duality gap, or number of inner iterations), 
we consider the performance metric given by
\[
	\mathcal Q_\theta^\pi(w_a^s)
	:=
	\begin{cases}
		\pi(w^s_a)-\pi^*+\theta&\text{if }w^s_a\text{ satisfies the termination criterion},\\
		\infty	&\text{otherwise.}
	\end{cases}
\]
Here, $\pi^*$ represents a benchmark value which is chosen to be the globally optimal function value (or a
suitable approximate of it) if $\pi$ measures computed function values, and simply zero in the
other three cases. Furthermore, $\theta\geq 0$ is an additional parameter which reduces sensitivity to
numerical accuracy and will be specified in the respective experiments. 
For the performance ratio
\[
	\forall a\in\mathcal A\,\forall s\in\mathcal S\colon\quad
	r_{a,s}^\pi
	:=
	\frac{\mathcal Q_\theta^\pi(w_a^s)}
	{\min\{\mathcal Q_\theta^\pi(w_{a'}^s)\,|\,a'\in\mathcal A\}},
\]
given for fixed $\pi$,
we plot the illustrative parts of the curves $\rho_a^\pi\colon[1,\infty)\to[0,1]$ ($a\in\mathcal A$), 
defined by
\[
	\forall\tau\in[1,\infty)\colon\quad
	\rho_a^\pi(\tau):=
	\frac{\card\{s\in\mathcal S\,|\,r_{a,s}^\pi\leq\tau\}}{\card(\mathcal S)}
\]
where $\card(\cdot)$ assigns to each input set its cardinality.

\subsubsection{Numerical experiments}\label{sec:experiments}

\paragraph*{Experiment 1}

We investigate the linear bilevel optimization problem
\begin{equation}\label{eq:BardFalk1982}\tag{Ex1}
	\min\limits_{x,y}\{-2x_1+x_2+\tfrac12y_1\,|\,x\geq 0,\,y\in\Psi(x)\},
\end{equation}
where $\Psi\colon\R^2\tto\R^2$ is given by
\[
	\forall x\in\R^2\colon\quad
	\Psi(x)
	:=
	\argmin\limits_y
	\left\{-4y_1+y_2\,\middle|\,
		\begin{aligned}&2x_1-y_1+y_2\geq \tfrac52,\,x_1+x_2\leq 2,\\
		&x_1-3x_2+y_2\leq 2,\,y\geq 0
		\end{aligned}
	\right\},
\]
which is taken from \cite[Example~2]{BardFalk1982}.
For the creation of random starting points, we made use of the box $[0,2]^4$.
The optimal objective value of this program is given by $f^*:=-3.25$.
The resulting performance profiles as well as the chosen offset parameters $\theta$ can be
found in \cref{fig:PerfProf_Ex1}. Additionally, some averaged numbers are presented in  
\cref{tab:BardFalk1982}.
\begin{figure}[ht]\centering
\includegraphics[width=7.0cm]{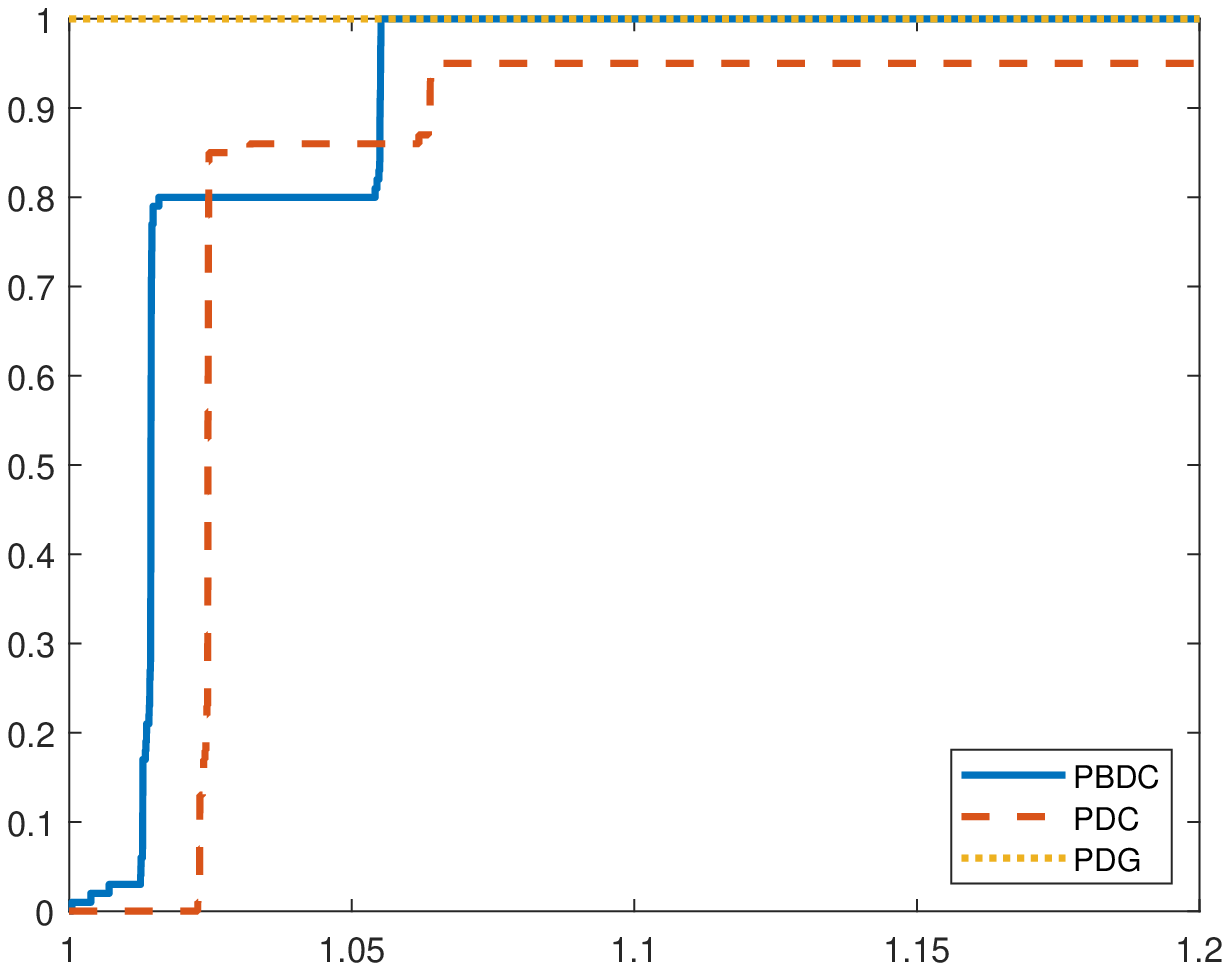}
\includegraphics[width=7.0cm]{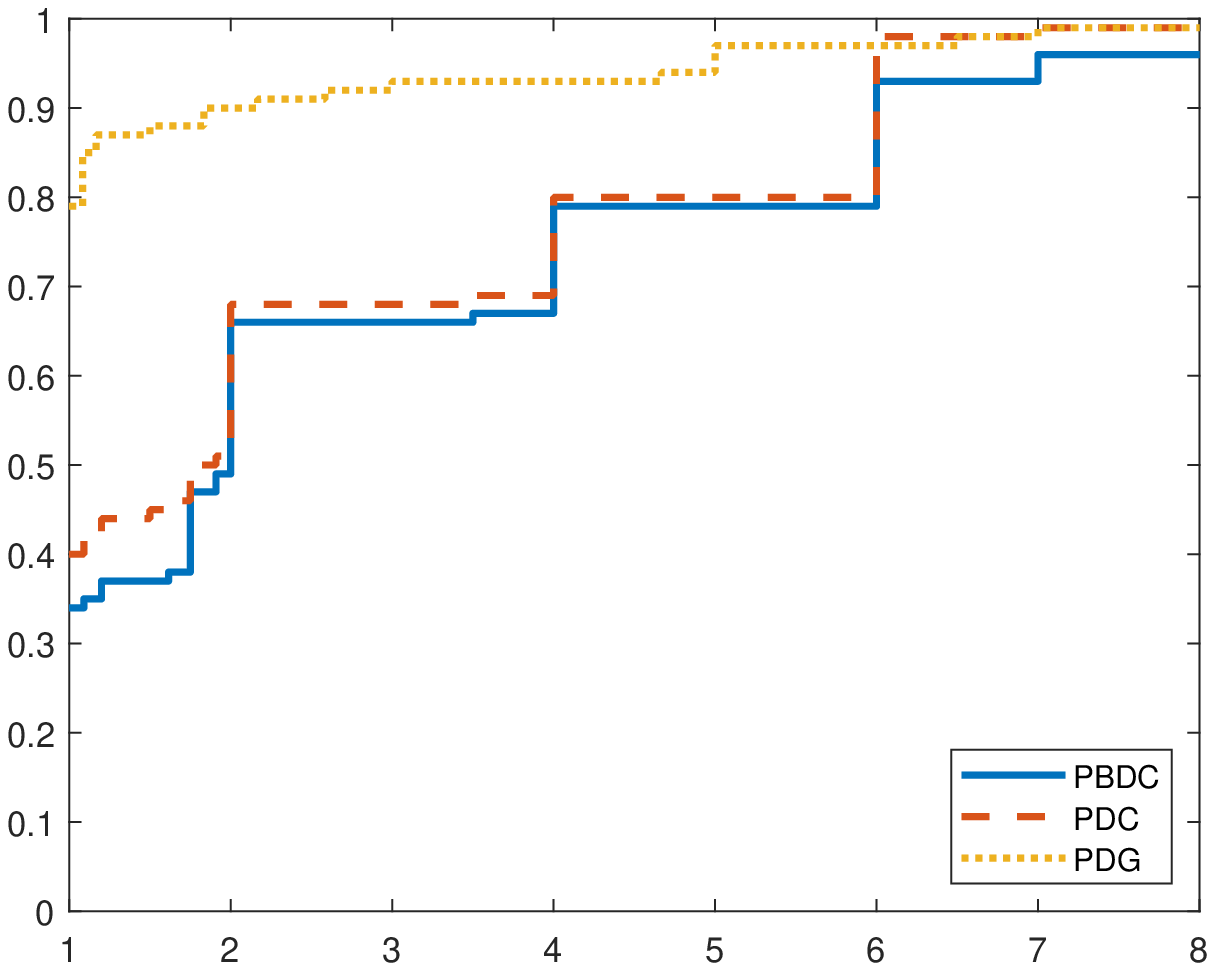}
\\
\includegraphics[width=7.0cm]{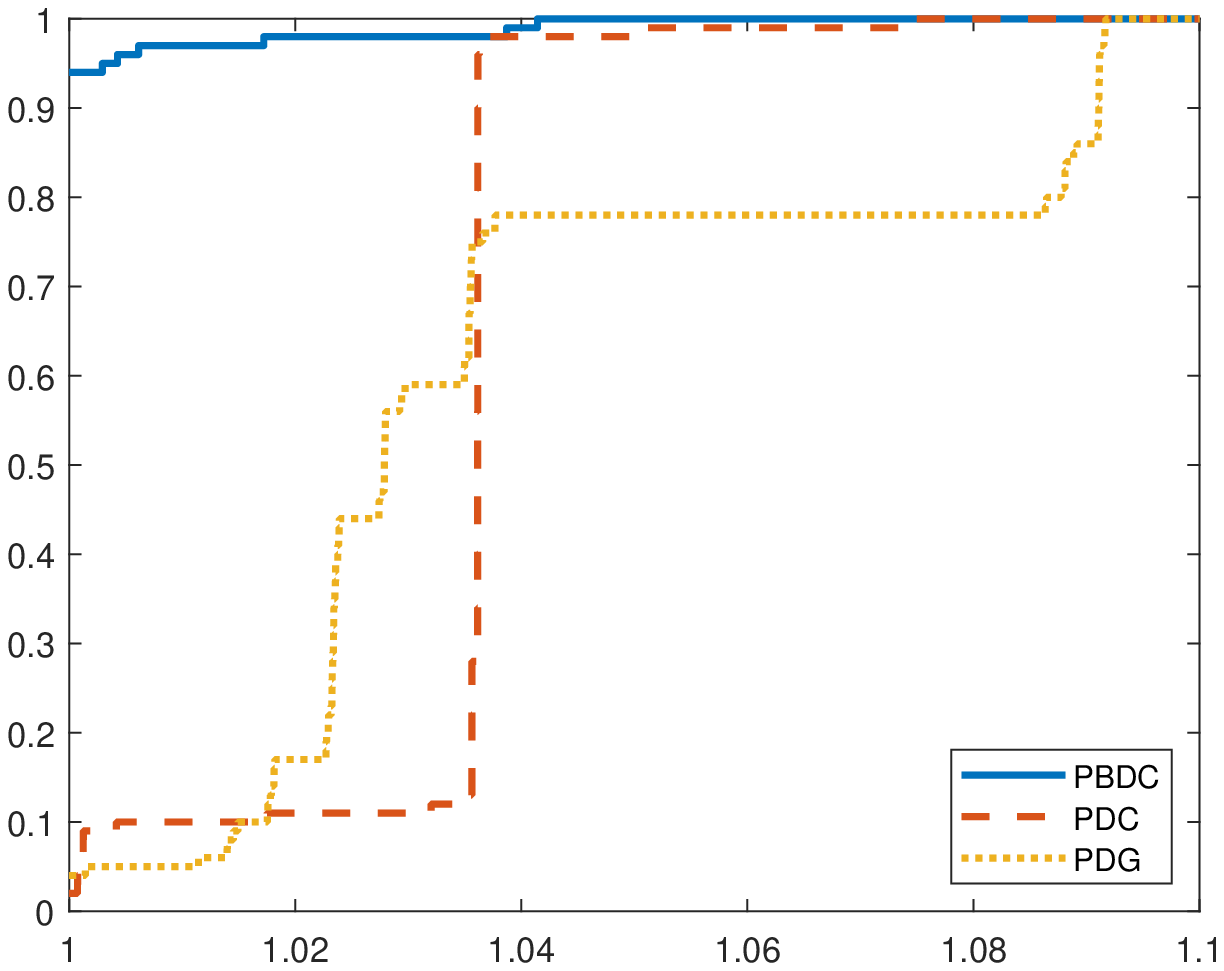}
\includegraphics[width=7.0cm]{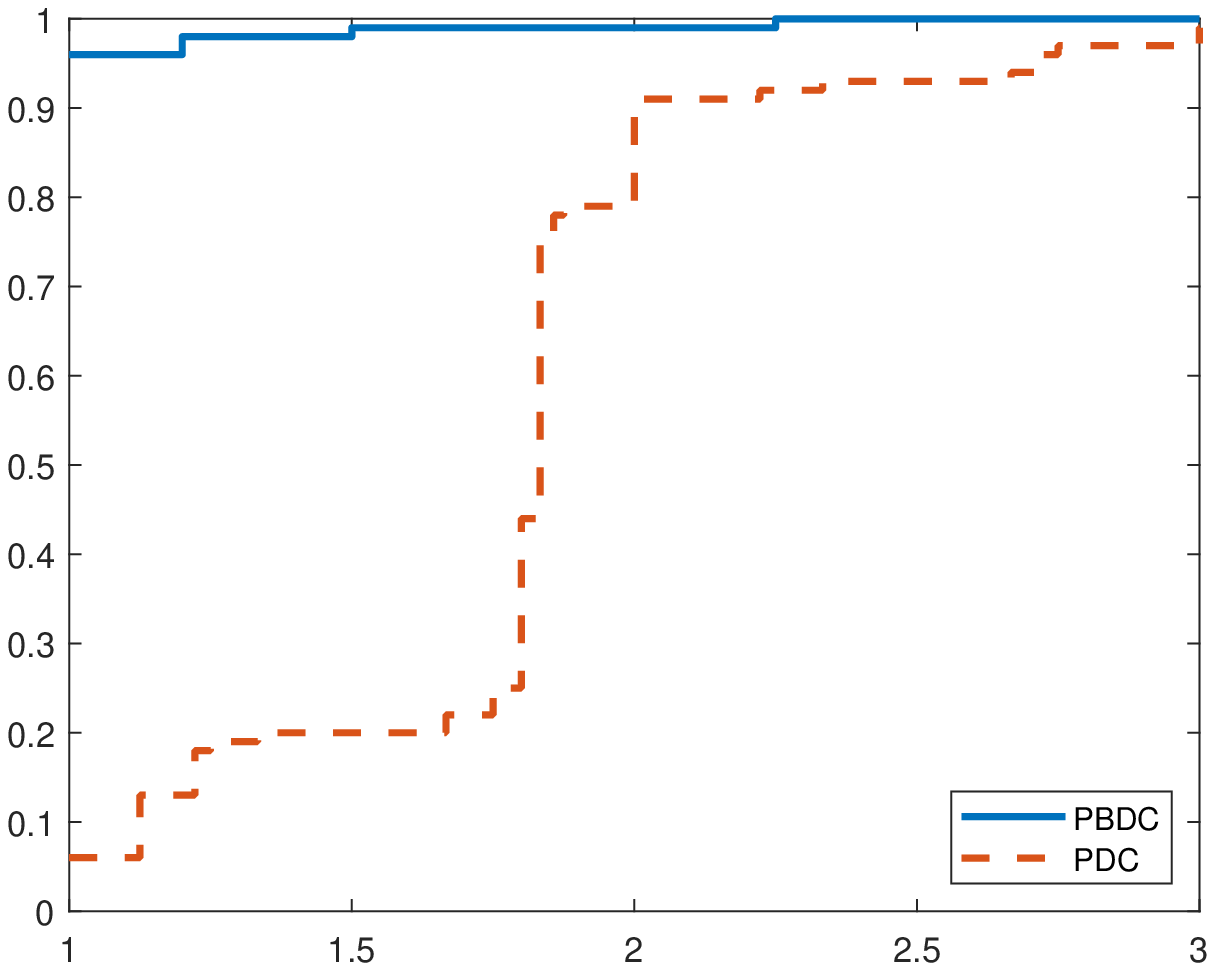}
\caption{Performance profiles for \eqref{eq:BardFalk1982}.
From top left to bottom right: function values ($\theta=10^{-4}$), outer iterations ($\theta=1$),
	duality gap ($\theta=10^{-6}$), and total inner iterations ($\theta=1$).}
\label{fig:PerfProf_Ex1}
\end{figure}
\begin{table}
\begin{center}
\begin{tabular}{l||l|l|l}
	&\textbf{PBDC}&\textbf{PDC}&\textbf{PDG}\\
	\hline
	average function value&-3.2500&-3.2214&-3.2500\\
	\hline
	average number of outer iterations&12.4000&11.2500&7.8600\\
	\hline
	average lower level duality gap&8.9684$\cdot10^{-9}$&4.0932$\cdot10^{-8}$&4.7177$\cdot10^{-8}$\\
	\hline
	average number of inner iterations&5.1600&9.6700&-
\end{tabular}
\caption{Averaged performance indices for \eqref{eq:BardFalk1982}.}
\label{tab:BardFalk1982}
\end{center}
\end{table}

It turns out that \textbf{PBDC} as well as \textbf{PDG} reliably compute the global minimizer of
\eqref{eq:BardFalk1982}, and both methods do not outrun \textbf{PDC} in this regard. 
However, we see that \textbf{PDG} needs less outer iterations than the DC-methods until 
the termination criterion is
reached. Nevertheless, we observe that \textbf{PBDC} needs less DC-iterations than \textbf{PDC}.
Interestingly, the outputs of \textbf{PBDC} come along with a duality gap which
is smaller by factor $10^{-1}$ than the upper bound $10^{-7}$ appearing in the 
termination criterion of the outer loop in several cases. 

\paragraph*{Experiment 2}

Next, we investigate \cite[Example~3.3]{LamparielloSagratella2017} which is given by
\begin{equation}\label{eq:LamparielloSagratella2017}\tag{Ex2}
	\min\limits_{x,y}\{x^2+(y_1+y_2)^2\,|\,x\geq 0.5,\,y\in\Psi(x)\}
\end{equation}
where $\Psi\colon\R\tto\R^2$ is defined by
\[
	\forall x\in\R\colon\quad
	\Psi(x):=\argmin\limits_y\{y_1\,|\,x+y_1+y_2\geq 1,\,y\geq 0\}.
\]
For the creation of random starting points, we exploited the box $[0,2]^3$.
The optimal objective value of this program is given by $f^*:=0.5$.
The resulting performance profiles as well as some averages regarding the performance
indices can be found in \cref{fig:PerfProf_Ex2} and \cref{tab:LamparielloSagratella},
respectively.
\begin{figure}[ht]\centering
\includegraphics[width=7.0cm]{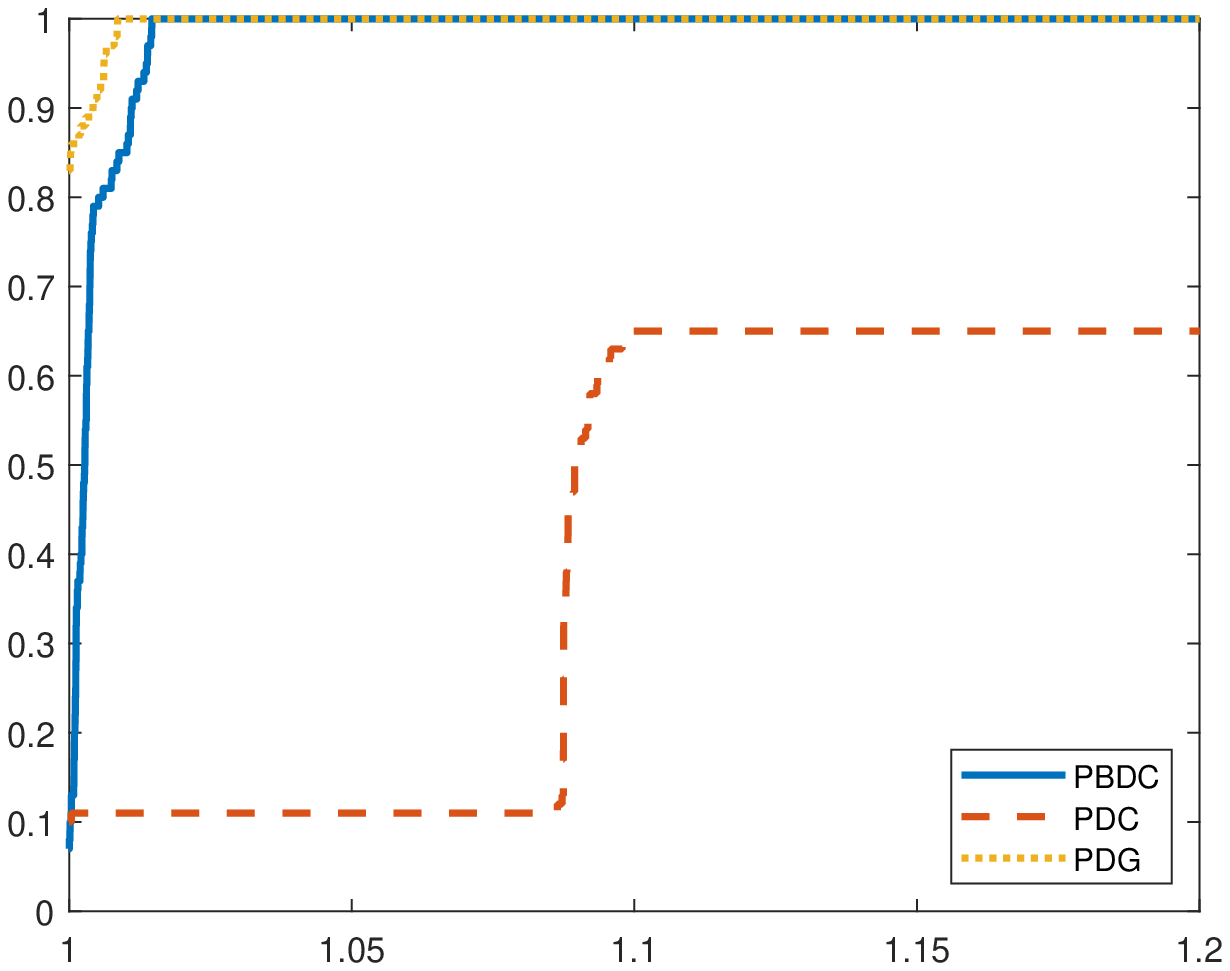}
\includegraphics[width=7.0cm]{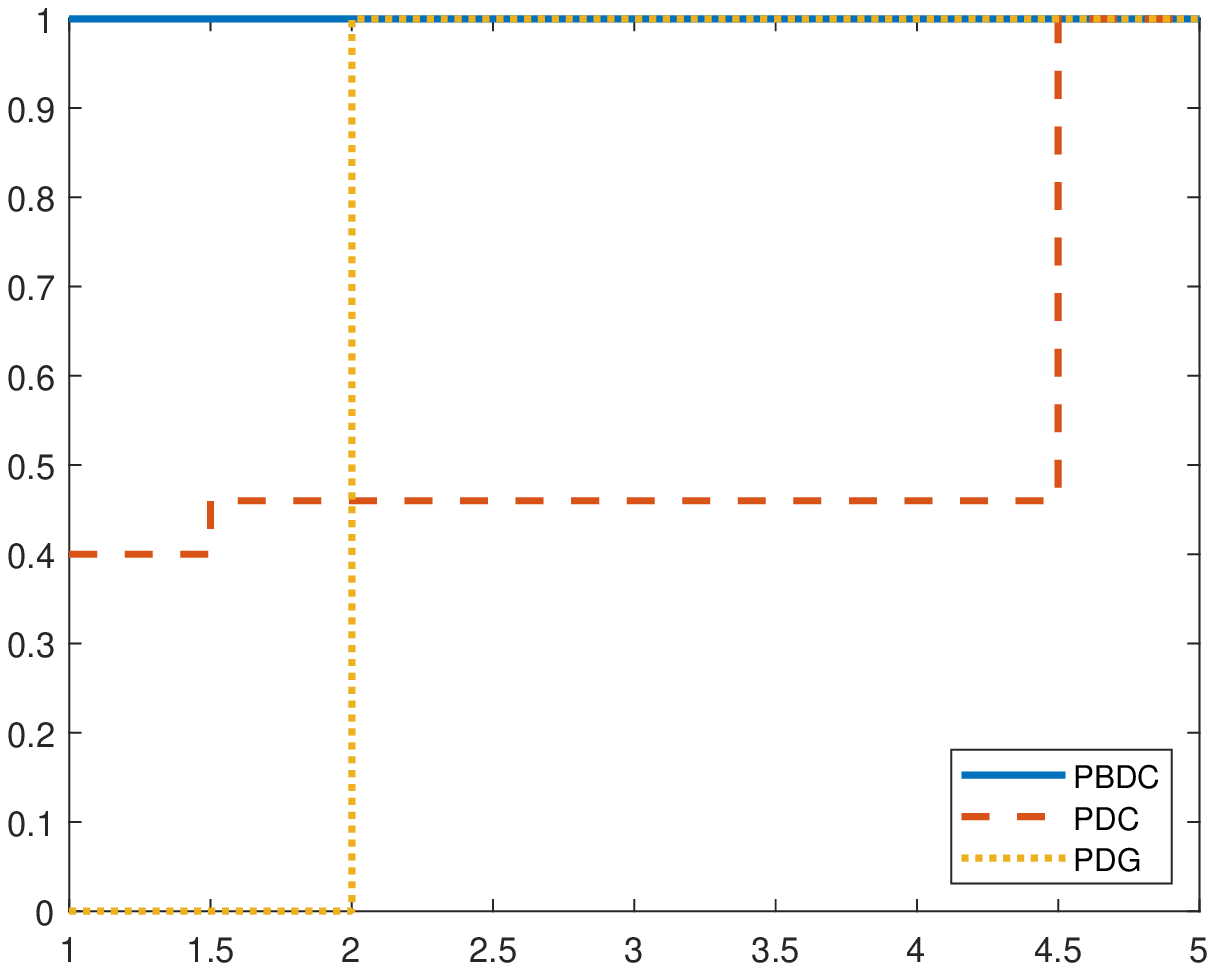}
\\
\includegraphics[width=7.0cm]{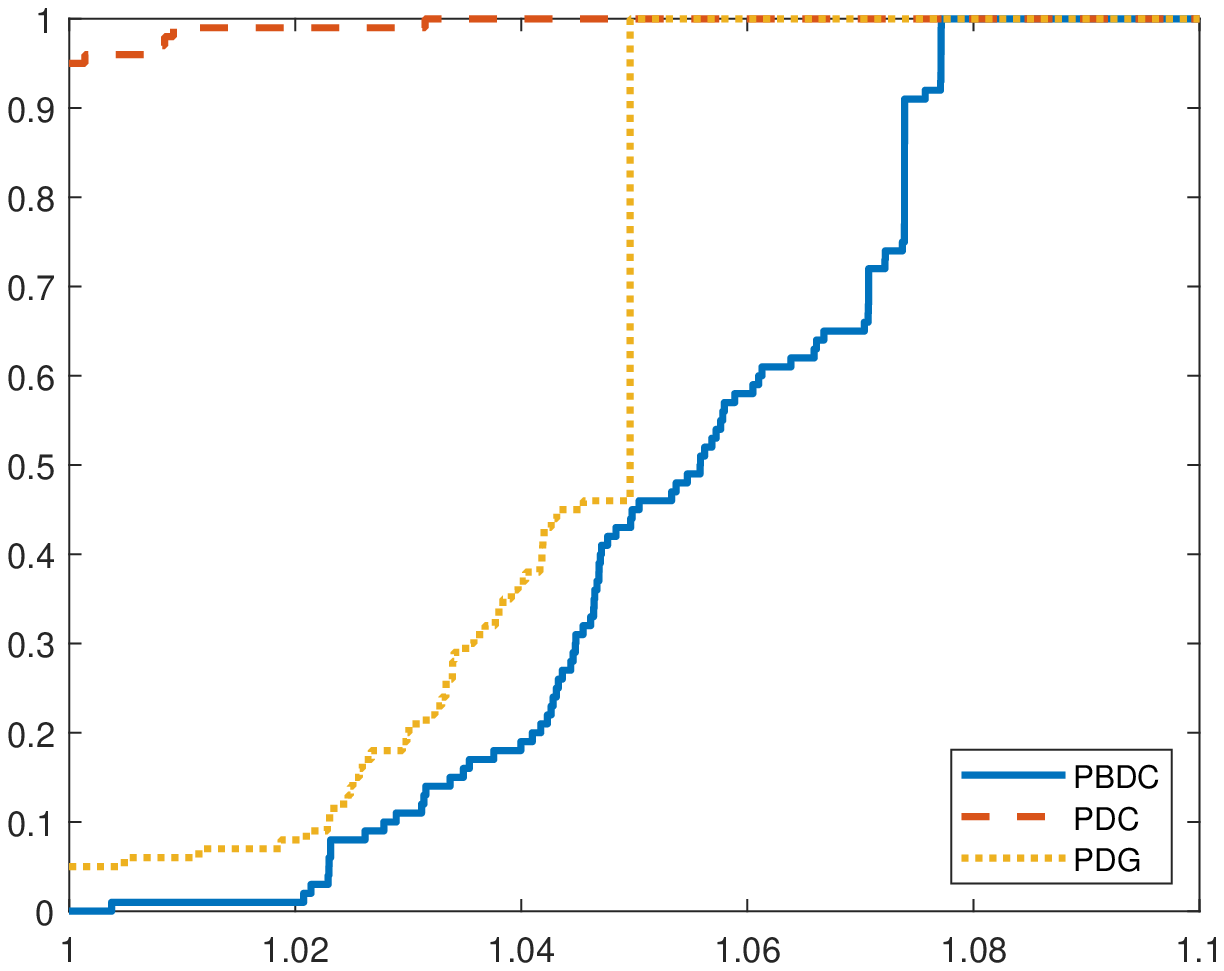}
\includegraphics[width=7.0cm]{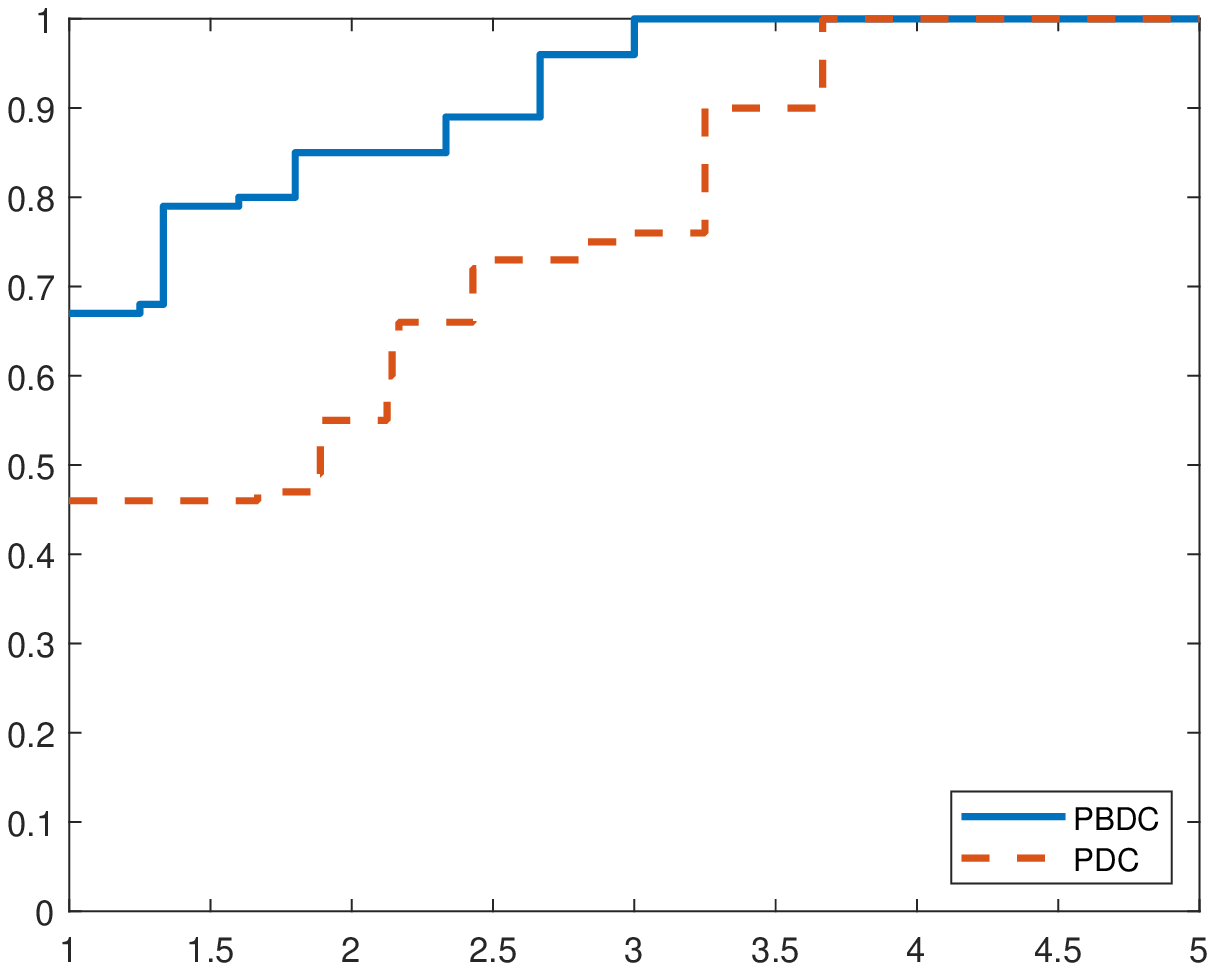}
\caption{Performance profiles for \eqref{eq:LamparielloSagratella2017}.
From top left to bottom right: function values ($\theta=10^{-5}$), outer iterations ($\theta=1$),
	duality gap ($\theta=10^{-6}$), and total inner iterations ($\theta=1$).}
\label{fig:PerfProf_Ex2}
\end{figure}
\begin{table}
\begin{center}
\begin{tabular}{l||l|l|l}
	&\textbf{PBDC}&\textbf{PDC}&\textbf{PDG}\\
	\hline
	average function value&0.5000&0.5285&0.5000\\
	\hline
	average number of outer iterations&1.0000&4.8400&3.0000\\
	\hline
	average lower level duality gap&7.0489$\cdot10^{-8}$&1.5821$\cdot10^{-8}$&5.5576$\cdot10^{-8}$\\
	\hline
	average number of inner iterations&4.7100&8.2700&-
\end{tabular}
\caption{Averaged performance indices for \eqref{eq:LamparielloSagratella2017}.}
\label{tab:LamparielloSagratella}
\end{center}
\end{table}

Similar to our first experiment, we observe that \textbf{PBDC} and \textbf{PDG} reliably compute
the global minimizer. For that purpose, \textbf{PBDC} only needs to run one outer penalty iteration
while \textbf{PDG} runs three outer iterations for each starting point. Regarding both criteria,
\textbf{PDC} shows some flaws. Inspecting the total number of inner iterations, \textbf{PBDC} behaves
much better than \textbf{PDC}. Finally, a look at the final duality gap shows that all three methods
just fall below the upper bound $10^{-7}$ which was used in the termination criterion. 
Here, \textbf{PDC} seems to have slight advantages over the other two methods. However, it is already
outperformed by the other two methods regarding the more important performance measures.

\paragraph*{Experiment 3}

Finally, we are going to challenge the three penalty methods by means of the inverse transportation
problem
\begin{equation}\label{eq:InverseTransportation}\tag{Ex3}
	\min\limits_{x,y}\{\tfrac12\norm{y-y_\textup{o}}^2\,|\,
		x\geq 0,\,\mathtt e^\top x\geq\mathtt e^\top b^\textup{dem},\,y\in\Psi(x)
		\}
\end{equation}
where $\Psi\colon\R^n\tto\R^{n\times \ell}$ is the solution mapping of the 
parametric transportation problem
\begin{equation}\label{eq:transprotation}\tag{TR$(x)$}
	\min\limits_{y}\left\{
		\mathsmaller\sum\nolimits_{i=1}^n\mathsmaller\sum\nolimits_{j=1}^\ell c_{ij}y_{ij}\,\middle|
		\begin{aligned}
			&\mathsmaller\sum\nolimits_{j=1}^\ell y_{ij}\leq x_i\,(i=1,\ldots,n),\\
			&\mathsmaller\sum\nolimits_{i=1}^ny_{ij}\geq b^\textup{dem}_j\,(j=1,\ldots,\ell),\\
			&y\geq 0
		\end{aligned}
		\right\}.
\end{equation}
Above, $\ell\in\N$ is a positive integer, $b^\textup{dem}\in\{0,\ldots,10\}^\ell$ 
is a random integer vector which models the minimum demand of the $\ell$ consumers, and
$c\in[0,1]^{n\times \ell}$ is a randomly chosen cost matrix. In \eqref{eq:transprotation}, the parameter
$x\in\R^n$ represents the offer provided at the $n$ warehouses which is unknown and shall be 
reconstructed from a given (noised) transportation plan $y_\textup{o}\in\R^{n\times \ell}$.
The latter is constructed in the following way: 
For $x_\textup{d}:=(\mathtt e^\top b^\textup{dem})/n\,\mathtt e$, 
we choose $y_\textup{d}\in\Psi(x_\textup{d})$. Afterwards, some noise is added to $y_\textup{d}$ in
order to create $y_\textup{o}$. 

For our experiments, we chose $n:=5$ and $\ell:=7$.
The precise values of the data $c$, $b^\textup{dem}$, and $y_\textup{o}$ used for our experiments can be found in \cref{sec:appendix}. 
The components of the random starting points are chosen from the interval $[0,6]$. 
The minimum function value realized in our experiments is given by $f^*=5.000776\cdot 10^{-4}$.
The associated point $(x^*,y^*)$ is also given in \cref{sec:appendix}.
The resulting
performance profiles and averaged performance indices are documented in
\cref{fig:PerfProf_Ex3} and \cref{tab:InverseTransportation}, respectively.
\begin{figure}[ht]\centering
\includegraphics[width=7.0cm]{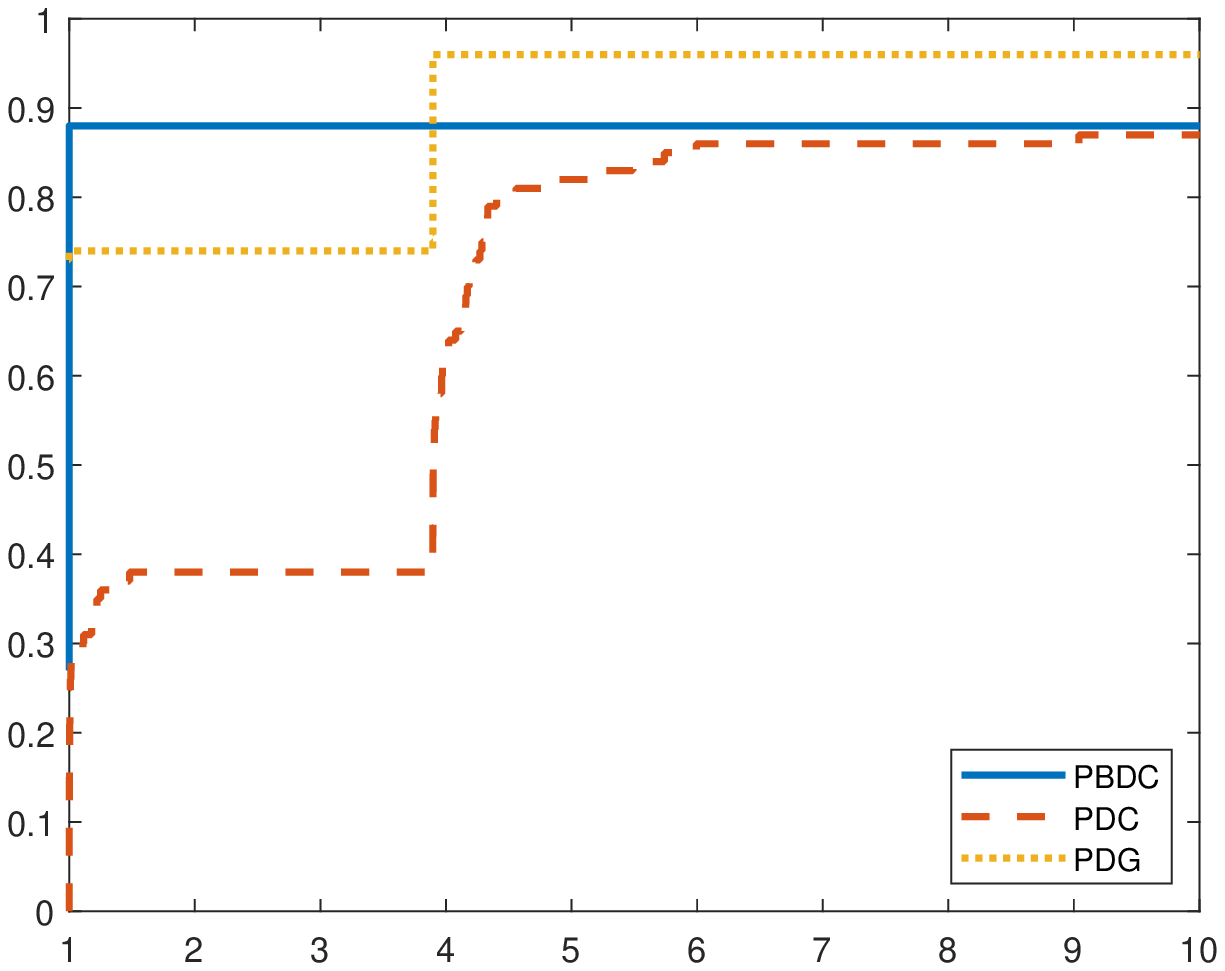}
\includegraphics[width=7.0cm]{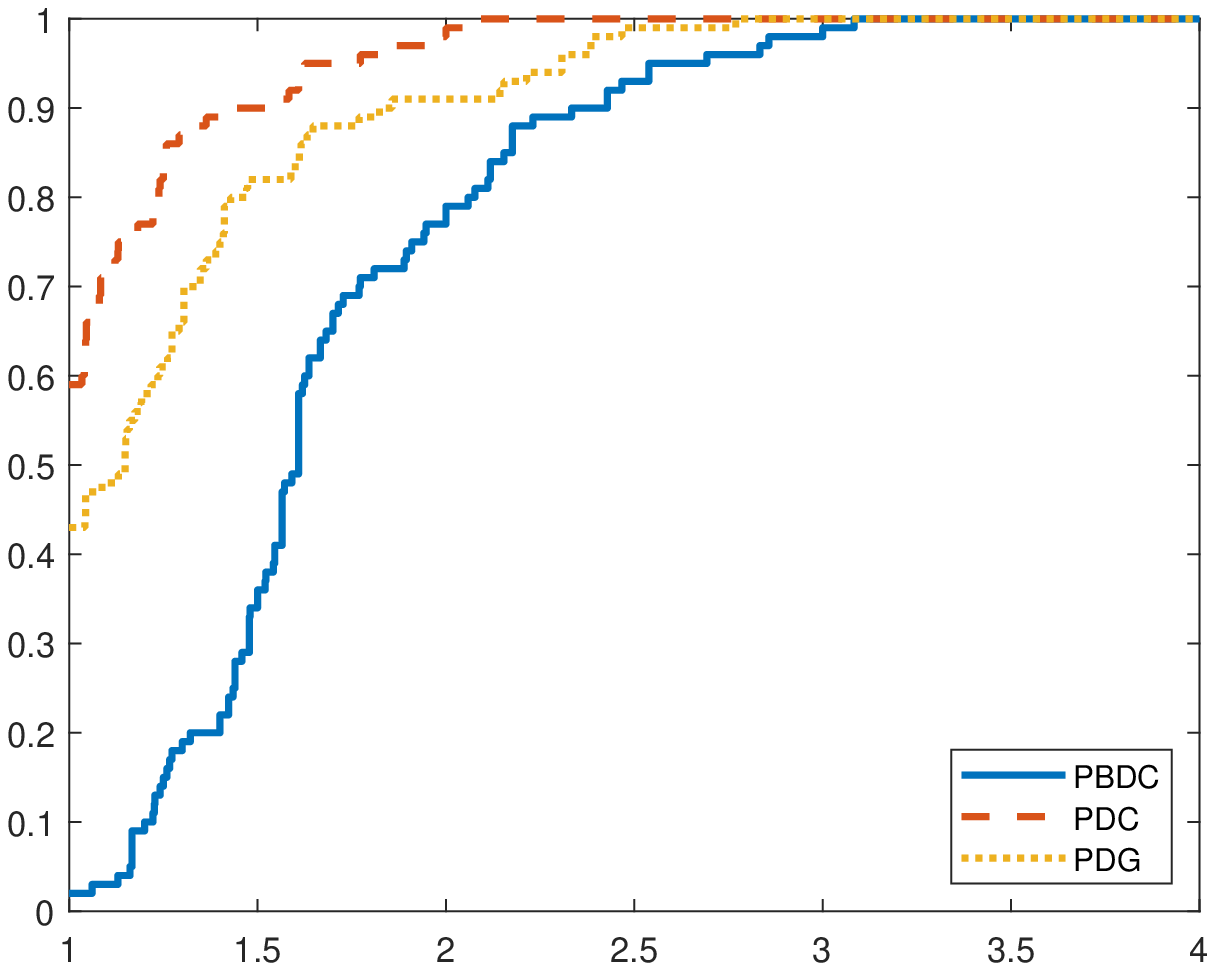}
\\
\includegraphics[width=7.0cm]{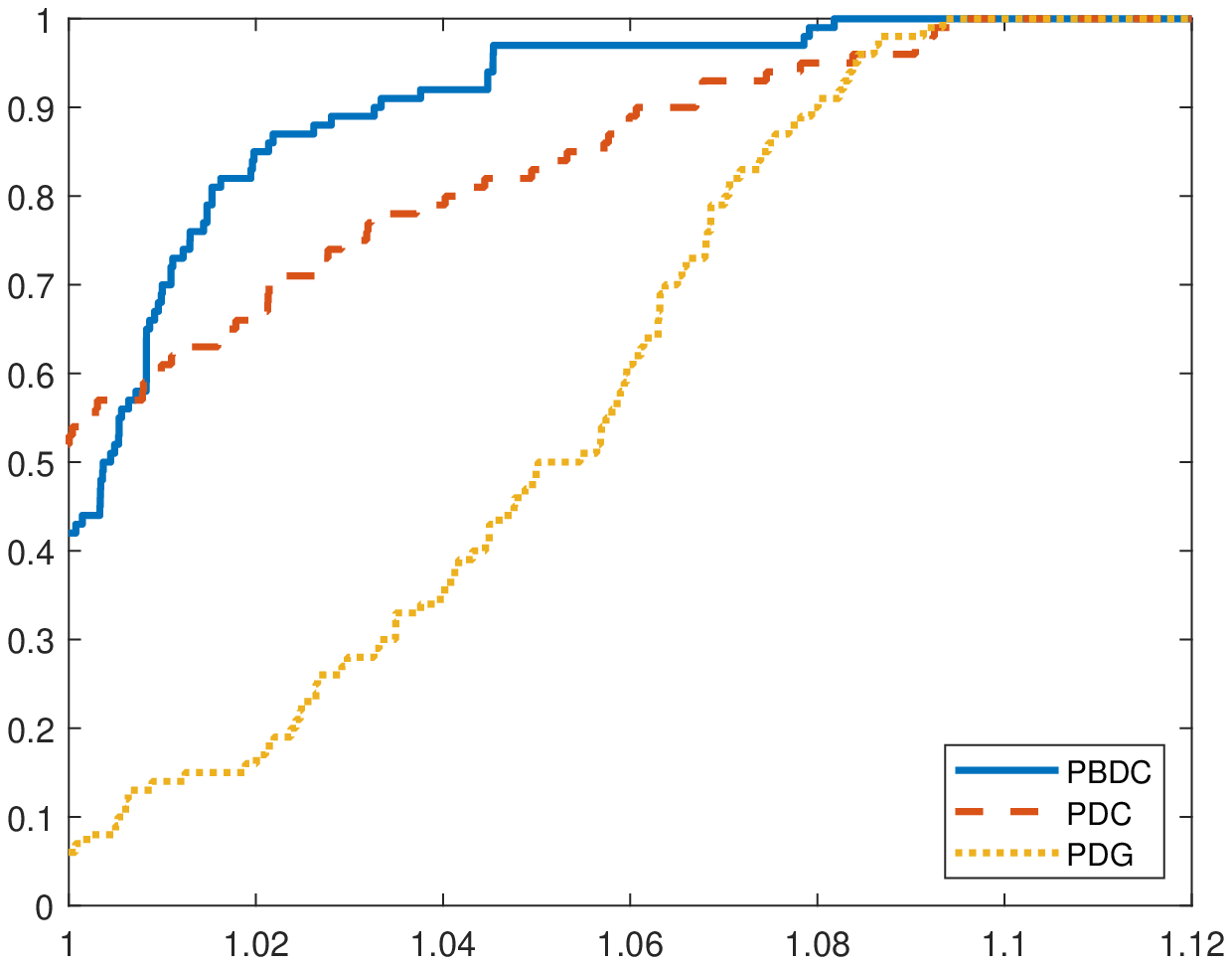}
\includegraphics[width=7.0cm]{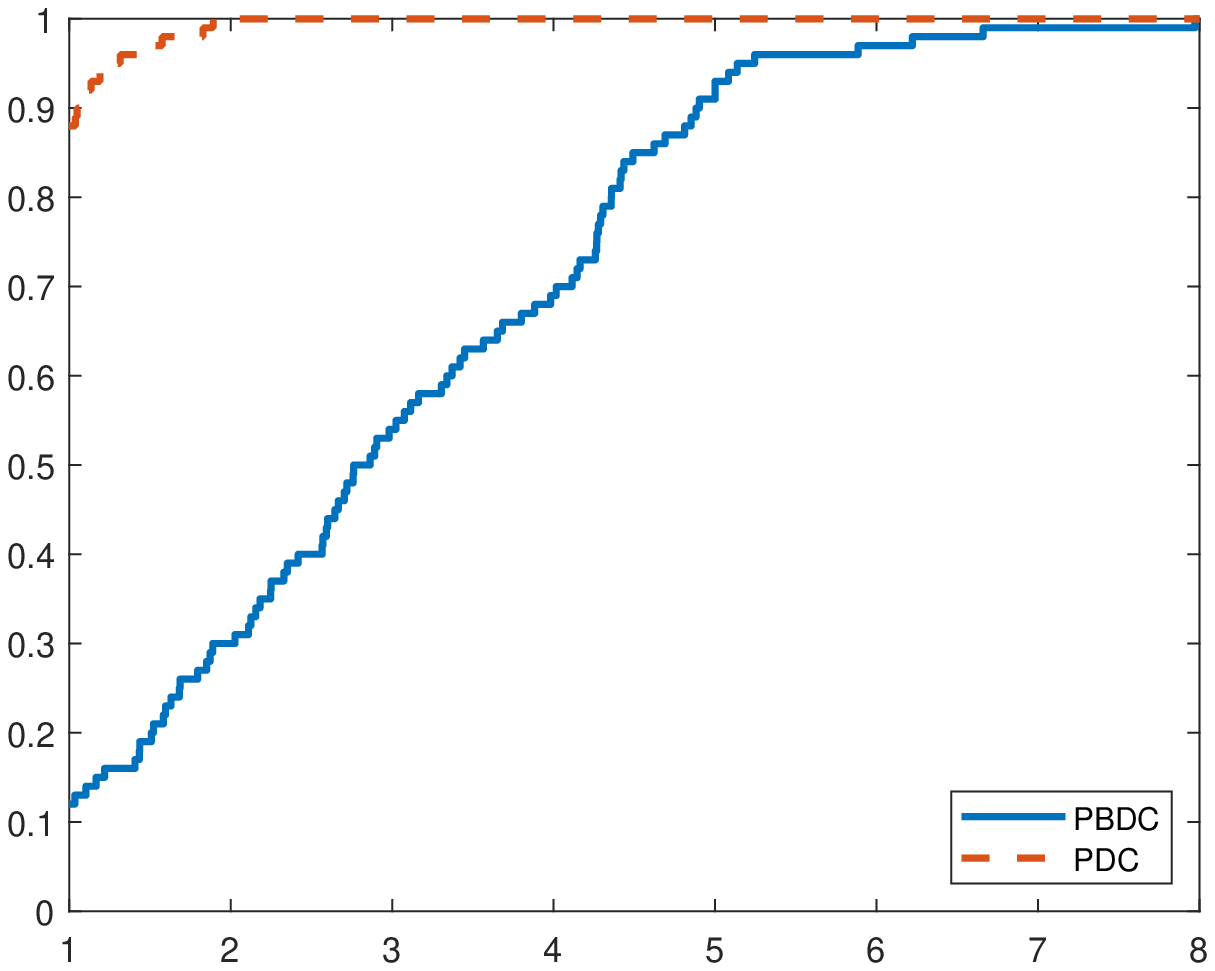}
\caption{Performance profiles for \eqref{eq:InverseTransportation}.
From top left to bottom right: function values ($\theta=10^{-2}$), outer iterations ($\theta=1$),
	duality gap ($\theta=10^{-6}$), and total inner iterations ($\theta=1$).}
\label{fig:PerfProf_Ex3}
\end{figure}

\begin{table}
\begin{center}
\begin{tabular}{l||l|l|l}
	&\textbf{PBDC}&\textbf{PDC}&\textbf{PDG}\\
	\hline
	average function value&0.5380&0.5561&0.2004\\
	\hline
	average number of outer iterations&33.7200&22.3600&25.4200\\
	\hline
	average lower level duality gap&2.4727$\cdot10^{-8}$&3.2808$\cdot10^{-8}$&6.2347$\cdot10^{-8}$\\
	\hline
	average number of inner iterations&177.2100&61.1100&-
\end{tabular}
\caption{Averaged performance indices for \eqref{eq:InverseTransportation}.}
\label{tab:InverseTransportation}
\end{center}
\end{table}

Clearly, \eqref{eq:InverseTransportation} is a far more challenging problem than 
\eqref{eq:BardFalk1982} or \eqref{eq:LamparielloSagratella2017}. Throughout the
test runs, we observed that the algorithms tend to identify different stationary
points of \eqref{eq:InverseTransportation} with heavily differing objective values.
However, in 87 of the 100 test runs, \textbf{PBDC} approximately identified $(x^*,y^*)$.
In this regard, \textbf{PDC} only succeeded in 10 of the test runs while
\textbf{PDG} was successful in 71 of the test runs. Let us point out that the first row
of \cref{tab:InverseTransportation} might be slightly misleading. Particularly,
the surprisingly large value for the average function value related to \textbf{PBDC}
results from the fact that the algorithm finds a point with objective value $2.9051$ in $7$
runs and a point with objective value $5.0909$ in $5$ runs.
The performance advantage of \textbf{PBDC} regarding function values comes for a price,
namely, a significantly larger number of outer and inner iterations in comparison with
the other two methods. Regarding the size of the final duality gap, we did not figure
out any surprising behavior. Let us underline that although \textbf{PDC}
comes along with the smallest number of outer and inner iterations, its outputs are often
far away from $(x^*,y^*)$ which is why this advantage seems to be practically irrelevant.

\subsubsection{Summary}

Throughout the experiments, we observed that \textbf{PBDC} computes reasonable points, i.e.,
global minimizers, of \eqref{eq:upper_level} in most of the test runs. For the smaller test
instances \eqref{eq:BardFalk1982} and \eqref{eq:LamparielloSagratella2017}, some performance
advantage regarding \textbf{PDC} w.r.t.\ inner and outer iteration numbers has been observed.
In the more challenging setting of \eqref{eq:InverseTransportation}, we abstain from putting
too much emphasis on the iteration numbers of \textbf{PDC} since this algorithm did not compute
points near the global minimizer in most of the runs. Furthermore, we attest \textbf{PDG} a
solid performance regarding computed function values and iteration numbers. Here, the simplicity
of the subproblem \eqref{eq:subproblem_duality_gap} seems to pay off particularly for smaller
problem instances.

\section{Conclusions and perspectives}\label{sec:conclusions}

In this paper, we applied the concepts of asymptotic stationarity and regularity to
nonlinear optimization problems with potentially nonsmooth but Lipschitzian data functions. 
Our theoretical investigations led to the formulation of three comparatively weak 
regularity conditions which
enrich the landscape of available constraint qualifications in the field of nonsmooth programming,
see \cref{def:asymptotic_regularity} and \cref{fig:CQs}.
Afterwards, we investigated complementarity-constrained programs in order to show that these
quite general concepts possess some reasonable extensions to disjunctive programs where they can be
used in order to carry out the convergence analysis associated with
some solution methods under weak assumptions.
Pointing the reader's attention back to \cref{rem:or_constrained_programming}, our results make
clear that similar concepts can be easily obtained for or- and vanishing-constrained optimization
problems. It remains a task for future research to study the capability of asymptotic stationarity
and regularity in the context of the numerical treatment of these problem classes.
Finally, we exploited asymptotic regularity in the context of bilevel optimization.
More precisely, we justified certain stationarity conditions as well as a penalty method 
for the numerical solution
of affinely constrained bilevel optimization problems. 
Results of some computational experiments are shown in order to provide a quantitative justification
of our approach. We already pointed out that the overall
theory can be easily extended to bilevel optimization problems with nonlinear but fully convex
constraints. However, in this situation, asymptotic regularity is no longer inherently satisfied.
It remains to be seen whether this concept yields applicable constraint qualifications for 
more general bilevel programming problems. Furthermore, it has to be studied whether the ideas behind 
\cref{alg:boosted_DC_penalty} still lead to convincing numerical results as soon as further
nonlinearities appear in the problem data. 
Suitable test problems can be found in the BOLIB collection from \cite{ZhouZemkohoTin2020} which
also comprises \eqref{eq:BardFalk1982} and \eqref{eq:LamparielloSagratella2017} considered in
\cref{sec:experiments}.


\appendix
\section{Appendix}\label{sec:appendix}

Here, we provide the missing data for Experiment 3 from \cref{sec:experiments}.
First, we state the data matrices $c\in[0,1]^{5\times 7}$, $b^\textup{dem}\in\{1,\ldots,10\}^7$,
and $y_\textup{o}\in\R^{5\times 7}$.
\begin{align*}
	c
	&=
	\begin{pmatrix}
		0.5757&0.8423&0.4997&0.4390&0.1491&0.0283&0.7567\\
		0.7961&0.2936&0.1152&0.3751&0.8289&0.8418&0.6652\\
		0.9601&0.9431&0.1127&0.6483&0.4808&0.0665&0.8978\\
		0.4972&0.7713&0.0604&0.2625&0.6511&.01336&0.6385\\
		0.3849&0.7657&0.6529&0.3815&0.0300&0.3401&0.9189
	\end{pmatrix}\\
	b^\textup{dem}
	&=
	\begin{pmatrix}
		5&5&5&10&3&9&1
	\end{pmatrix}^\top\\
	y_\textup{o}
	&=
	\begin{pmatrix}
		-0.0032&0.0053&-0.0031&0.0024&2.9991&4.5902&0.0020\\
		0.0020&5.0030&1.5969&-0.0001&0.0040&0.0078&0.9911\\
		-0.0080&0.0030&3.2053&0.0098&-0.0075&4.3973&0.0035\\
		-0.0025&0.0073&0.1958&7.3927&0.0035&-0.0059&0.0074\\
		5.0050&-0.0016&-0.0100&2.5930&-0.0045&0.0074&0.0020
	\end{pmatrix}
\end{align*}
Next, we state the solution $(x^*,y^*)\in\R^5\times\R^{5\times 7}$ 
with the best function value $f^*=5.000766\cdot 10^{-4}$
found during our experiments. 
\begin{align*}
	x^*
	&=
	\begin{pmatrix}
		7.5965&7.5975&7.6095&7.5964&7.6002
	\end{pmatrix}^\top
	\\
	y^*
	&=
	\begin{pmatrix}
		0&0&0&0&3.0000&4.5965&0\\
		0&5.0000&1.5975&0&0&0&1.0000\\
		0&0&3.2060&0&0&4.4035&0\\
		0&0&0.1965&7.3998&0&0&0\\
		5.0000&0&0&2.6002&0&0&0
	\end{pmatrix}
\end{align*}
We note that the \emph{desired} pair of variables 
$(x_\textup{d},y_\textup{d})\in\R^5\times\R^{5\times 7}$,
which has been used for the precise construction of the problem data, is given as stated below.
\begin{align*}
	x_\textup{d}
	&=
	\begin{pmatrix}
		7.6000&7.6000&7.6000&7.6000&7.6000
	\end{pmatrix}^\top\\
	y_\textup{d}
	&=
	\begin{pmatrix}
		0&0&0&0&3.0000&4.6000&0\\
		0&5.0000&1.6000&0&0&0&1.0000\\
		0&0&3.2000&0&0&4.4000&0\\
		0&0&0.2000&7.4000&0&0&0\\
		5.0000&0&0&2.6000&0&0&0
	\end{pmatrix}
\end{align*}

\end{document}